\newtheorem{theorem}{Theorem}[section]
\newtheorem{theorem*}{Theorem A\!\!}
\newtheorem{proposition}{Proposition}[section]
\newtheorem{corollary}{Corollary}[section]
\newtheorem{definition}{Definition}[section]
\newtheorem{proposition*}{Proposition A\!\!}
\newtheorem{corollary*}{Corollary A\!\!}
\newtheorem{lemma}{Lemma}[section]
\DeclareMathOperator{\Hom}{Hom}
\DeclareMathOperator{\Id}{Id}
\begin{document}

\title {Singular conformally invariant trilinear forms, I\\ the multiplicity one theorem}

\author{Jean-Louis Clerc}

\date{February 15, 2015}

 \maketitle

\begin{abstract}{A \emph{normalized holomorphic family} (depending on $\boldsymbol \lambda \in \mathbb C^3$) of conformally invariant trilinear forms on the sphere is studied. Its zero set $Z$ is described. For $\boldsymbol \lambda \notin Z$, the multiplicity of the space of conformally invariant trilinear forms is shown to be 1.}
\end{abstract}
{2010 Mathematics Subject Classification : 22E45, 43A80}\\
Key words : conformal covariance, trilinear form, meromorphic family of distributions, transverse differential operator.

\section*{Introduction}

The present paper is a continuation of the study of conformally invariant trilinear forms for three representations belonging to the scalar principal series of the conformal group $G=SO_0(1,n)$. More precisely, the representations are realized on $\mathcal C^\infty(S)$, where $S\simeq S^{n-1}$ is the unit sphere in a $n$-dimensional Euclidean space $E$, and  trilinear forms are required to be continuous for the natural topology on $\mathcal C^\infty(S)\times \mathcal C^\infty(S)\times \mathcal C^\infty(S)$.

In the reference \cite{co}, co-authored with B. \O rsted, the \emph{generic} case was studied. The trilinear forms $\mathcal K^{\boldsymbol \lambda}$ are formally defined by an integral formula, depending on a parameter $\boldsymbol \lambda = (\lambda_1,\lambda_2,\lambda_3)$ in $\mathbb C^3$, each $\lambda_j$ indexing a representation of the scalar principal series of $G$. The domain of convergence is described, and the meromorphic extension is obtained, showing simple poles along four families of parallel equally spaced planes in $\mathbb C^3$. A \emph{generic multiplicity one theorem} (valid for $\boldsymbol \lambda$ not a pole) is proved for the space of trilinear conformally invariant forms.

In a second article \cite{bc}, coauthored with R. Beckmann, the residues along the planes of poles were  computed, at least generically. The planes of poles are partitioned in two types, those of \emph{type }I and those of \emph{type }II. Viewed as a distribution on $S\times S\times S$, the residue of $\mathcal K^{\boldsymbol \lambda}$ at a pole of type I is a singular distribution, supported on a submanifold of codimension $(n-1)$, its expression uses \emph{covariant differential operators} on $S$, while the residue at a pole of type II is supported on a submanifold of codimension $2(n-1)$ (the diagonal in $S\times S\times S$) and its expression requires \emph{covariant bi-differential operators} (mapping functions  on $S\times S$ to functions on $S$).

In the present paper the \emph{normalized } holomorphic family $\widetilde {\mathcal K}^{\boldsymbol \lambda}$ (obtained from $\mathcal K^{\boldsymbol \lambda}$ by multiplication by appropriate inverse $\Gamma$ factors) is introduced and studied. The first result is the determination of the zero set $Z$ of the holomorphic function $\boldsymbol \lambda \mapsto \widetilde {\mathcal K}^{\boldsymbol \lambda}$. The set $Z$ is contained in the set of poles and  is of codimension 2 ($Z$ is a denumerable union of lines in $\mathbb C^3$). The determination of $Z$ uses $K$-harmonic analysis on $S\times S\times S$, where $K\simeq SO(n)$ is the maximal compact subgroup of $G$. The explicit computation of some integrals (called Bernstein-Reznikov integrals) (see \cite{br,d,co, ckop}) plays a major role in this approach.

The second main result is an extension of the generic multiplicity one theorem. For $\boldsymbol \lambda\in \mathbb C^3$, let $Tri(\boldsymbol \lambda)$ be the space of conformally invariant trilinear forms with respect to the three representations indexed by $\boldsymbol \lambda$. 
\begin{theorem}\label{mult1}
\[\boldsymbol \lambda \in \mathbb C^3 \setminus Z \Longrightarrow \dim Tri(\boldsymbol \lambda) = 1 \ .\]
\end{theorem}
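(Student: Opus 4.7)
The inequality $\dim Tri(\boldsymbol\lambda)\geq 1$ for $\boldsymbol\lambda\notin Z$ is immediate from the construction of the normalized family: the conformal invariance identity satisfied by $\widetilde{\mathcal K}^{\boldsymbol\lambda}$ on its domain of convergence is a continuous linear identity in $\boldsymbol\lambda$ and therefore persists throughout the holomorphic extension; by the very definition of $Z$, $\widetilde{\mathcal K}^{\boldsymbol\lambda}\neq 0$ for $\boldsymbol\lambda\notin Z$, so it provides a nontrivial element of $Tri(\boldsymbol\lambda)$.

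The reverse inequality is the substantive content. Write $P\subset\mathbb C^3$ for the pole set of the unnormalized $\mathcal K^{\boldsymbol\lambda}$ described in \cite{co}. Outside $P$ the generic multiplicity-one theorem of that reference already gives $\dim Tri(\boldsymbol\lambda)=1$, so only points of $P\setminus Z$ require a new argument. I plan to handle them by a holomorphic-rigidity argument: fix $\boldsymbol\lambda_0\in P\setminus Z$ and an arbitrary $T_0\in Tri(\boldsymbol\lambda_0)$, choose a holomorphic disk $t\mapsto\boldsymbol\lambda(t)$ with $\boldsymbol\lambda(0)=\boldsymbol\lambda_0$ whose image meets $P$ only at the origin, and construct a holomorphic family $T(t)\in Tri(\boldsymbol\lambda(t))$ with $T(0)=T_0$. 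On the punctured disk the generic theorem forces $T(t)=c(t)\,\widetilde{\mathcal K}^{\boldsymbol\lambda(t)}$ for a scalar $c(t)$; boundedness of $T(t)$ near $t=0$ together with $\widetilde{\mathcal K}^{\boldsymbol\lambda_0}\neq 0$ and Riemann's removable singularity theorem promote $c$ to a holomorphic function at $0$, yielding $T_0=c(0)\,\widetilde{\mathcal K}^{\boldsymbol\lambda_0}$.

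Constructing the holomorphic extension $T(t)$ is the main obstacle, and I would carry it out through the same $K$-harmonic analysis on $S\times S\times S$ used earlier in the paper to describe $Z$. Any continuous $K$-invariant trilinear form on $\mathcal C^\infty(S)^{\otimes 3}$ decomposes into $K$-isotypic coefficients $T_{\alpha\beta\gamma}$ whose indexing space is independent of $\boldsymbol\lambda$, while the infinitesimal action of $\mathfrak p=\mathfrak g\ominus\mathfrak k$ imposes recurrences between adjacent $K$-types with coefficients polynomial in $\boldsymbol\lambda$. For $\boldsymbol\lambda\notin P$ these recurrences determine all $T_{\alpha\beta\gamma}$ from the spherical component $T_{000}$; for $\widetilde{\mathcal K}^{\boldsymbol\lambda}$ this spherical component is exactly the Bernstein--Reznikov integral whose vanishing locus is $Z$. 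At a degenerate $\boldsymbol\lambda_0\in P\setminus Z$ some recurrence coefficients vanish, producing both potential extra solutions and compatibility conditions on the initial data; the crux is to show that the explicit description of $Z$ obtained in the first main result of the paper forces all such compatibility conditions to be automatic and all would-be extra solutions to arise from the holomorphic family $\widetilde{\mathcal K}^{\boldsymbol\lambda}$ itself, which is exactly what the nonvanishing hypothesis $\boldsymbol\lambda_0\notin Z$ guarantees.
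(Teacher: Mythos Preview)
Your lower bound and the reduction to $P\setminus Z$ are fine, but the core of your argument has a genuine gap: the step ``construct a holomorphic family $T(t)\in Tri(\boldsymbol\lambda(t))$ with $T(0)=T_0$'' is exactly as hard as the theorem itself. The dimension of the solution space of a holomorphic family of linear constraints is only \emph{upper} semicontinuous; if $\dim Tri(\boldsymbol\lambda_0)$ were to exceed $1$, it is precisely the elements outside the line $\mathbb C\,\widetilde{\mathcal K}^{\boldsymbol\lambda_0}$ that would fail to extend to nearby generic $\boldsymbol\lambda$. So assuming you can extend an arbitrary $T_0$ presupposes the conclusion. Your last paragraph makes this circularity explicit: once you assert that ``all would-be extra solutions arise from the holomorphic family $\widetilde{\mathcal K}^{\boldsymbol\lambda}$ itself,'' you are saying every element of $Tri(\boldsymbol\lambda_0)$ is a multiple of $\widetilde{\mathcal K}^{\boldsymbol\lambda_0}$, which is what you set out to prove. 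The $K$-type recurrence sketch does not resolve this: even granting that the $\mathfrak p$-recurrences are polynomial in $\boldsymbol\lambda$, you would need to show (i) that at $\boldsymbol\lambda_0\notin Z$ the recurrences still pin down the coefficients uniquely from a single datum, and (ii) that the formal solution so obtained actually defines a continuous trilinear form. Neither is automatic, and (i) is again equivalent to multiplicity one.

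The paper's proof is completely different and does not attempt any holomorphic deformation of an arbitrary $T_0$. For a pole $\boldsymbol\lambda_0\notin Z$ it first establishes a \emph{support theorem}: every nonzero $T\in Tri(\boldsymbol\lambda_0)$ has $\operatorname{Supp}(T)=\operatorname{Supp}(\widetilde{\mathcal K}^{\boldsymbol\lambda_0})$. The key ingredient is that the generic kernel $\mathcal K_{\boldsymbol\alpha,\mathcal O_0}$ admits \emph{no} $\boldsymbol\lambda_0$-invariant extension to $S\times S\times S$; this is proved by building an ``almost invariant'' extension $F_1$ (the $s$-derivative of the normalized family along a disk), showing that the defect $F_1\circ\boldsymbol\pi_{\boldsymbol\lambda_0}(g)-F_1$ is a known multiple of $\widetilde{\mathcal K}^{\boldsymbol\lambda_0}$ supported on a lower orbit, and then deriving a contradiction by comparing transverse symbols along that orbit at a fixed point of the one-parameter group $a_t$. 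Once the support is pinned down, uniqueness follows from Bruhat-type results (Lemma~\ref{invsing1} for type~I) and, for type~II and I+II, from a direct linear-algebra analysis of the system governing covariant bi-differential operators. None of this is captured by your rigidity scheme.
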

An equivalent formulation is that $Tri(\boldsymbol \lambda) =\mathbb C\widetilde K^{\boldsymbol \lambda}$ for $\boldsymbol \lambda\notin Z$. 

To obtain this result, we first reformulate Bruhat's theory (\cite{b}). The concept of a distribution \emph{smoothly supported on a submanifold} is introduced, which allows, for such a distribution, the definition of its \emph{transverse symbol} (see \cite{kv} for similar ideas). This gives more flexibility for studying "nearly invariant" distributions supported on a submanifold, and this is crucial in section 6. The strategy for the proof of Theorem \ref{mult1} is presented with more details at the beginning of section 6.

T. Oshima (personal communication) observed that  $\dim Tri(\boldsymbol \lambda) \geq 2$ for $\boldsymbol \lambda\in Z$. This (Theorem \ref{mult2})is obtained by using a general result on the closure of a meromorphic family of distributions (Lemma 6.3 in \cite{o}). Hence 
\[\boldsymbol \lambda \in \mathbb C^3 \setminus Z \Longleftrightarrow \dim Tri(\boldsymbol \lambda) = 1 \ .
\]

Throughout this paper, we assume that $n\geq 4$. The methods of the present paper could be (to the price of slight modifications) developed also for $n=2$ or $n=3$, but these two cases are special. This can be observed already in  Liouville's theorem on local conformal diffeomorphisms, for which $S^1$ and $S^2$ stand apart. Perhaps a deeper insight into the difference can be obtained from the geometric structure of spheres when viewed as \emph{symmetric R-spaces}. Recall that a symmetric R-space is a homogeneous space $S=G/P$ where $G$ is a semi-simple Lie group, and  $P$ a parabolic subgroup, such that, denoting by $K$ a maximal compact subgroup of $G$, the space $S$, viewed as $S\simeq K/(K\cap P)$ is a compact Riemannian \emph{symmetric} space. As a major result,  a symmetric R-space is a \emph{real form of  a compact Hermitian space}. Now observe that $S^1$ is a real form of $P^1(\mathbb C)$, which is a Hermitian compact symmetric space of rank 1, $S^2\simeq P^1(\mathbb C)$  \emph{is} a compact Hermitian symmetric space (hence a real form of $P^1(\mathbb C) \times \overline{P}^1(\mathbb C)$, a product of two Hermitian symmetric spaces of rank 1), whereas, for $d\geq 3$, $S^d$ is a real form of the complex projective quadric $Q^d(\mathbb C)$, which is an irreducible compact Hermitian symmetric space of rank 2 (see \cite{c} for more details on the geometry and analysis of symmetric R-spaces). For $n=3$, which corresponds to the Riemann sphere $S^2\simeq P^1(\mathbb C)$ as a homogenous space for $SL_2(\mathbb C)$, the conformally invariant trilinear forms where studied by Oksak in \cite {o1} (even more generally for representations of the principal series which are not necessarily scalar). He obtained fairly complete results, making heavy use of the complex nature of $S^2$, and his techniques do not seem to be transposable to  higher dimensional spheres.

Let us mention connections to two other problems. First, 
the space $Tri(\boldsymbol \lambda)$ is isomorphic to the space $\Hom_G(\pi_{\lambda_1}\otimes \pi_{\lambda_2}, \pi_{-\lambda_3})$ (consequence of a lemma due to Poulsen, see \cite {p}). From this point of view, the present study is a special case of the \emph{restriction program} designed by T. Kobayashi and collaborators, where the "big group" is $G\times G$ and the "small group" is $diag(G)\simeq G$, the representations are $\pi_{\lambda_1}\otimes \pi_{\lambda_2}$ as a representation of $G\times G$, and $\pi_{-\lambda_3}$ as a representation of $G$. In this respect, the reference \cite{ks} was a source of inspiration for the present paper. A second problem concerns those invariant trilinear forms, which, when viewed as distributions on $S\times S\times S$, are supported in the diagonal.  They are expressed by \emph{covariant bi-differential operators}. Those have been studied (see \cite{or,s}), also in connection with problems about tensor product of generalized Verma modules (see \cite{s, koss},) and they deserve further study. 
\medskip

\centerline{\bf Summary}
\medskip

{\bf 0. Introduction}
\smallskip

{\bf 1.} {\bf Conformal geometry and analysis on the sphere}

\hskip 1cm {1.1} Geometry of the sphere

\hskip 1cm {1.2} The scalar principal series  

\hskip 1cm {1.3} Geometry of orbits in $S\times S\times S$

{\bf 2.} {\bf The holomorphic family $\widetilde {\mathcal K}^{\boldsymbol \lambda}$}

\hskip 1 cm {2.1} The construction of the generic family

\hskip 1 cm {2.2} The Bernstein-Reznikov integrals

{\bf 3.} {\bf $K$-analysis of $\widetilde {\mathcal K}^{\boldsymbol \lambda}$}

\hskip 1 cm 3.1 $K$-invariant polynomial functions on $S\times S\times S$

\hskip 1 cm 3.2 $K$-analysis of $\widetilde {\mathcal K}^{\boldsymbol \lambda}$

\hskip 1 cm 3.3 The support of $\widetilde {\mathcal K}^{\boldsymbol \lambda}$

{\bf 4.} {\bf The zero set $Z$ of $\widetilde {\mathcal K}^{\boldsymbol \lambda}$}

\hskip 1 cm  4.1 The first vanishing situation for $\widetilde {\mathcal K}^{\boldsymbol \lambda}$

\hskip 1 cm 4.2 The second vanishing situation for $\widetilde {\mathcal K}^{\boldsymbol \lambda}$

\hskip 1 cm 4.3 Necessary conditions for the vanishing of  $\widetilde {\mathcal K}^{\boldsymbol \lambda}$

{\bf 5.} {\bf Distributions smoothly supported on a submanifold}
\medskip

{\bf 6.} {\bf The theorem of the support}

\hskip 1 cm 6.1 Type I

\hskip 1 cm 6.2 Type II

\hskip 1 cm 6.3 Type I+II

{\bf 7.} {\bf Multiplicity 1 results}

\hskip 1cm 7.1 Type I

\hskip 1cm 7.2 Type II and type I+II

\hskip 1cm 7.3 Multiplicity one theorem for bi-differential operators

\hskip 1cm 7.4 Final remarks

{\bf References}

\section{Conformal geometry and analysis on the sphere}

\subsection{ Geometry of the sphere}
Let $S\simeq S^{n-1}$ be the unit sphere in a Euclidean space $E$ of dimension $n$. The Euclidean distance is denoted by $\vert x-y\vert$. Let $G=SO_0(n,1)$ be the connected component of the neutral element in the Lorentz group. The action of $G$ on $S$ is by conformal transformations. For $g \in G$ and $x\in S$, the \emph{conformal factor} $\kappa(g,x)$ of $g$ at $x$ is defined by the relation
\begin{equation} \vert Dg(x)\xi \vert = \kappa(g,x)\,\vert\xi\vert\ ,
\end{equation}
for any vector $\xi$ in the tangent space $T_x(S)$ of $S$ at $x$.

The restriction to the sphere of the Euclidean distance satisfies an important \emph{covariance relation} under the action of $G$, namely

\begin{equation}\label{covdist}
\vert g(x)-g(y)\vert = \kappa(g,x)^{\frac{1}{2}}\, \vert x-y\vert\, \kappa(g,y)^{\frac{1}{2}}
\end{equation}
for $x,y\in S, g\in G$.

The subgroup $K\simeq SO(n)$  can be identified with a maximal compact subgroup of $G$. It acts transitively on $S$. Let $dx$ be the measure on $S$ induced by the Euclidean structure. It transforms under the action of $G$ by the rule
$d(g(x)) = \kappa(g,x)^{n-1} dx$, for $g\in G$. 

Let $E^{1,n} =\mathbb R\oplus E$ equipped with the Lorentzian form
$ [(x_0, x)] = x_0^2-\vert x\vert^2$. The Lie algebra $\mathfrak g\simeq \mathfrak o(1,n)$ of the Lie group $G$ is realized in matrix form as

\[\mathfrak = \Bigg\{\begin{pmatrix} 0& &u^t& \\ & & &\\ u& &X&\\ & & & \end{pmatrix},\quad u\in E, X\in \mathfrak o(E)\Bigg\}\ .
\]
The Cartan decomposition of $\mathfrak g$ w.r.t. the standard Cartan involution is $\mathfrak g = \mathfrak k\oplus \mathfrak p$, where
\[\mathfrak k = \Bigg\{\begin{pmatrix} 0&0 \\ 0&X \end{pmatrix}, X\in \mathfrak o(E)\Bigg\}\simeq \mathfrak o(n), \quad \mathfrak p = \Bigg\{X_u=\begin{pmatrix} 0&u^t \\ u&0\end{pmatrix},\quad u\in E\Bigg\}\simeq E
\]
Fix an orthonormal basis $\{e_1,e_2,\dots, e_n\}$ of $E$, and choose $\mathbf 1 = (1,0,\dots, 0)$ as the origin in $S$. The stabilizer of $\mathbf 1$ in $G$ is a parabolic subgroup $P$ which contains in particular the Abelian subgroup  $(a_t)_{t\in \mathbb R}$, where
\[ a_t = \begin{pmatrix} \cosh t& \sinh t&\dots&0&\dots\\\sinh t& \cosh t&\dots&0&\dots\\0&0& 1&&\\ \vdots&\vdots& &\ddots&\\0&0&\dots& &1\end{pmatrix}\ .
\]
Notice that $a_t =\exp( tX_{e_1})$. These elements act on $S$ by the formula
\[x= \begin{pmatrix} x_1\\x_2\\\vdots\\x_n\end{pmatrix}
 \longmapsto a_t(x) = \begin{pmatrix}\frac{ \sinh t + x_1 \cosh t}{\cosh t +x_1\sinh t}\\ \frac{ x_2}{\cosh t +x_1\sinh t}\\\vdots\\\frac{ x_n}{\cosh t +x_1\sinh t} \end{pmatrix} \ ,\]
 and $\kappa(a_t, x) = (\cosh t+x_1 \sinh t)^{-1}$.
To any $X\in \mathfrak p$, we denote by $\widetilde X$ the vector field on $S$ given by
\[\widetilde X f(x) = \frac{d}{dt} f(\exp tX(x))_{\vert t=0}\ .
\] 
\begin{lemma} Let $X\in \mathfrak p\simeq E$. Then
\begin{equation}\label{derkappa}
\frac{d}{dt} \kappa(\exp tX, x)_{\vert t=0} =-\langle X,x\rangle\ .
\end{equation}
\end{lemma}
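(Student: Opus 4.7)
The plan is to exploit $K$-equivariance to reduce the computation to the one explicit case already provided in the text. Namely, for $X=X_{e_1}$ we have $\exp tX_{e_1}=a_t$ and $\kappa(a_t,x)=(\cosh t+x_1\sinh t)^{-1}$, so differentiating at $t=0$ gives $-x_1=-\langle e_1,x\rangle$, which is exactly $-\langle X_{e_1},x\rangle$ under the identification $\mathfrak p\simeq E$. Thus the lemma is already settled on a single nonzero vector in $\mathfrak p$, and both sides of (\ref{derkappa}) are clearly $\mathbb R$-linear in $u$ (the left side by $\exp(tX_{cu})=\exp(ctX_u)$), so it suffices to transport this computation to any unit vector $u\in E$ by a rotation.

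First I would record the standard multiplicative cocycle identity
\[\kappa(gh,x)=\kappa(g,h(x))\,\kappa(h,x),\]
which follows immediately from (1.1) and the chain rule $D(gh)(x)=Dg(h(x))\circ Dh(x)$. Since $K\simeq SO(n)$ acts on $S$ by Euclidean isometries, $\kappa(k,\cdot)\equiv 1$ for every $k\in K$, and the cocycle identity therefore collapses to
\[\kappa(kgk^{-1},x)=\kappa(g,k^{-1}x).\]

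Next I would use that the adjoint action of $K$ on $\mathfrak p$ is, under the identification $X_u\leftrightarrow u$, just the defining action of $SO(n)$ on $E$: explicitly, $\Ad(k)X_u=X_{ku}$. Hence for any unit $u\in E$ one can choose $k\in K$ with $ke_1=u$, giving $\exp tX_u=k\,a_t\,k^{-1}$, and the two observations above yield
\[\kappa(\exp tX_u,x)=\kappa(a_t,k^{-1}x)=(\cosh t+(k^{-1}x)_1\sinh t)^{-1}.\]
Differentiating at $t=0$ and rewriting $(k^{-1}x)_1=\langle e_1,k^{-1}x\rangle=\langle ke_1,x\rangle=\langle u,x\rangle$ gives $-\langle u,x\rangle=-\langle X,x\rangle$, which extends to all $u\in E$ by linearity.

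There is no real obstacle here; the whole argument is a conjugation-plus-cocycle reduction. The only thing one has to be careful about is the bookkeeping of the identification $\mathfrak p\simeq E$ and the fact that $\Ad(k)$ on $\mathfrak p$ matches the standard $SO(n)$-action on $E$, which is a direct matrix computation from the block form of elements of $\mathfrak k$ and $\mathfrak p$ displayed above.
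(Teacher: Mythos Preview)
Your proof is correct and follows essentially the same route as the paper: verify the formula for $X=X_{e_1}$ using the explicit expression for $\kappa(a_t,x)$, then reduce the general case to this one via a rotation. The paper phrases the reduction as ``choose the orthonormal basis of $E$ such that $X$ is (a multiple of) $e_1$'', while you make the mechanism explicit through the cocycle identity for $\kappa$ and the identification $\Ad(k)X_u=X_{ku}$; these are the same argument.
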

\begin{proof}
Let $X=X_{e_1}$. Then $\exp tX = a_t$, and $\kappa(a_t,x) = (\cosh t+x_1 \sinh t)^{-1}=1-x_1t+O(t^2)$. Hence $\displaystyle \frac{d}{dt} \kappa(\exp tX, x)_{\vert t=0} = -x_1= -\langle X,x\rangle$.
The general case follows, as it is always possible to choose the orthonormal basis of $E$ such that $X$ is (a multiple of) $e_1$.
\end{proof}
\subsection{The scalar principal series}

For $\lambda\in \mathbb C$, let $\pi_\lambda$ be the representation (\emph{spherical principal series}) realized on $\mathcal C^\infty(S)$ by
\[\pi_\lambda(g) f(x) = \kappa(g^{-1}, x)^{\rho+\lambda} f\big(g^{-1}(x)\big)\ ,
\]
where $\rho=\frac{n-1}{2}$. Observe that for $k\in K$, $\pi_\lambda(k)$ is independant of $\lambda$ and coincides with the regular action of $K$ on $\mathcal C^\infty(S)$.

\begin{lemma} Let $X\in \mathfrak p$. Then 
\begin{equation}\label{dpi}
d\pi_{\lambda}(X) f(x) = -\widetilde X f (x)+(\lambda +\rho) \langle X,x\rangle f(x)\ .
\end{equation}
\end{lemma}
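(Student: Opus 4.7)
The plan is to compute $d\pi_\lambda(X)f(x)$ straight from the definition, using Lemma 1.1 (formula (1.3)) for the derivative of $\kappa$ at the identity and the definition of the vector field $\widetilde X$.

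By definition of the derived representation,
\[
d\pi_\lambda(X)f(x) \;=\; \frac{d}{dt}\Big|_{t=0} \pi_\lambda(\exp tX)f(x) \;=\; \frac{d}{dt}\Big|_{t=0}\Big[\kappa\bigl(\exp(-tX),x\bigr)^{\rho+\lambda}\,f\bigl(\exp(-tX)(x)\bigr)\Big].
\]
I would apply the Leibniz rule. Since $\kappa(e,x)=1$ and $\exp(-tX)(x)|_{t=0}=x$, the two resulting terms pair the derivative of one factor with the value of the other at the identity.

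For the conformal-factor term, the chain rule gives
\[
\frac{d}{dt}\Big|_{t=0}\kappa\bigl(\exp(-tX),x\bigr)^{\rho+\lambda} \;=\; (\rho+\lambda)\,\frac{d}{dt}\Big|_{t=0}\kappa\bigl(\exp t(-X),x\bigr).
\]
Applying Lemma 1.1 with $-X\in\mathfrak p$ in place of $X$ yields $-\langle -X,x\rangle=\langle X,x\rangle$, so this contribution equals $(\lambda+\rho)\langle X,x\rangle f(x)$. For the function term, the definition of $\widetilde X$ as the infinitesimal generator of the $\exp tX$-action on $S$ gives
\[
\frac{d}{dt}\Big|_{t=0} f\bigl(\exp(-tX)(x)\bigr) \;=\; -\widetilde X f(x),
\]
where the minus sign simply records that we are differentiating along $\exp(-tX)$ rather than $\exp tX$.

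Adding the two pieces produces (1.5). There is no real obstacle here: the only thing to watch is the consistent tracking of the sign coming from the $g^{-1}$ appearing in the definition of $\pi_\lambda$, which propagates as a minus in the translation derivative but flips to a plus in front of $\langle X,x\rangle$ because Lemma 1.1 itself contains a minus sign.
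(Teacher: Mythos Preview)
Your proof is correct and is essentially identical to the paper's own argument: both differentiate $\pi_\lambda(\exp tX)f$ at $t=0$, apply Leibniz, and invoke Lemma~1.1 for the $\kappa$-derivative together with the definition of $\widetilde X$. The only difference is that you spell out the sign bookkeeping more carefully than the paper does.
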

\begin{proof}
\[ d\pi_\lambda(X) f (x) = \frac{d}{dt}\big( \kappa(\exp -tX),x)^{\lambda+\rho} f(\exp-tX(x))\big)_{\vert t=0}\]
\[ = -\widetilde X f(x) +(\lambda+\rho) \langle X,x\rangle f(x)
\]
by using \eqref{derkappa}.
\end{proof}
Recall the \emph{duality relation} for $\pi_\lambda$ and $\pi_{-\lambda}$ given by
\begin{equation}\label{duality}
 \int_S \big(\pi_{-\lambda}(g) \varphi\big)(x) \,\psi(x)\, dx = \int_S \varphi(x) \big(\pi_\lambda(g^{-1})\,\psi\big)(x) \,dx\ ,
\end{equation}
where $\varphi,\psi\in \mathcal C^\infty(S), g\in G$.

Finally, recall that the representation $\pi_\lambda$ is irreducible, unless \[\lambda \in \big(-\rho-\mathbb N\big) \cup \big( \rho+\mathbb N\big)\ .\] 

 \subsection{ Geometry of orbits in $S\times S\times S$}
The action of $G$ can be extended "diagonally" to $S\times S\times S$ by $g(x,y,z) = \big(g(x),g(y),g(z)\big)$. The next proposition recalls the structure of $G$-orbits for this action.

\begin{proposition} There are five orbits in $S\times S\times S$ under the action of $G$. Denoting by $(x,y,z)$ a generic element of $S\times S\times S$, they are given by

\[\mathcal O_0 = \{ x\neq y, y\neq z, z\neq x\}\] 
\[\mathcal O_1 = \{  y=z\neq x\}, \quad \mathcal O_2 = \{  z=x\neq y\}, \quad \mathcal O_3 = \{  x=y\neq z\}\ ,
\]
\[\mathcal O_4 = \{ x=y=z\}\ .\]
The orbit $\mathcal O_0$ is open and dense in $S\times S\times S$,  $\mathcal O_4$  is closed, and for $j=1,2,3$, $\overline {\mathcal O_j} = \mathcal O_j \cup \mathcal O_4$. \end{proposition}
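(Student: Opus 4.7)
The plan is to identify the $G$-orbit of a triple $(x,y,z)\in S\times S\times S$ via the only obvious $G$-invariant datum — its \emph{coincidence pattern} (which of the three coordinates are equal) — and to check that for each of the five possible coincidence patterns, $G$ acts transitively on the corresponding set. Since the coincidence pattern is clearly preserved by the diagonal action, each $\mathcal O_j$ is $G$-stable, so the whole question reduces to transitivity.

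For $\mathcal O_4$, transitivity follows at once from the fact that $K\simeq SO(n)$ acts transitively on $S$. For $\mathcal O_1,\mathcal O_2,\mathcal O_3$, by the obvious symmetry it suffices to handle $\mathcal O_3=\{x=y\neq z\}$, which amounts to showing that $G$ acts transitively on ordered pairs of distinct points of $S$. By $K$-transitivity move the first point to $\mathbf 1$; then one must show that the stabilizer $P$ of $\mathbf 1$ acts transitively on $S\setminus\{\mathbf 1\}$, and this follows from the explicit formulas for $a_t$ in Section 1.1 together with the action of $M=\mathrm{Stab}_K(\mathbf 1)\simeq SO(n-1)$ on the equator (equivalently, via stereographic projection $P$ acts as the Euclidean similarity group on $\mathbb R^{n-1}$, which is transitive). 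Finally, for the open orbit $\mathcal O_0$, after normalizing $x=\mathbf 1$ use the $1$-parameter subgroup $\{a_t\}\subset P$ to move any $z\neq\mathbf 1$ to $-\mathbf 1$ (a direct calculation with the formula $a_t(z)_1=(\sinh t+z_1\cosh t)/(\cosh t+z_1\sinh t)$ together with $M$-action on the remaining coordinates shows $MA$ is transitive on $S\setminus\{\mathbf 1\}$, and in particular can send $z$ to $-\mathbf 1$). It remains to see that the simultaneous stabilizer of $\mathbf 1$ and $-\mathbf 1$, which contains $MA$, is transitive on $S\setminus\{\pm\mathbf 1\}$: setting $s=\tanh t\in(-1,1)$, the map $z_1\mapsto(s+z_1)/(1+sz_1)$ sweeps out all values in $(-1,1)$, and $M=SO(n-1)$ then acts transitively on each latitude sphere $\{z:z_1=\text{const}\}$.

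For the topological assertions, observe that $\mathcal O_0$ is the complement of the closed set $\{x=y\}\cup\{y=z\}\cup\{z=x\}$, hence open, and it is dense because its complement is a finite union of submanifolds of positive codimension. The diagonal $\mathcal O_4=\{x=y=z\}$ is plainly closed. For $j=1,2,3$, each $\mathcal O_j$ is obtained from a single coincidence condition (e.g.\ $\mathcal O_3$ from $x=y$) by removing the further coincidence $z=x$; taking closure in $S\times S\times S$ just puts the removed points back, so $\overline{\mathcal O_j}=\mathcal O_j\cup\mathcal O_4$.

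The only step that is not completely formal is the transitivity of $MA$ on $S\setminus\{\pm\mathbf 1\}$, and this is the main point on which to be careful; everything else is bookkeeping with the action formulas of Section 1.1.
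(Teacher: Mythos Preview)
The paper does not actually prove this proposition; it simply refers to \cite{co}, Proposition~1.2. Your write-up is therefore more detailed than anything in the paper, and its overall architecture---reduce to the coincidence pattern, then check transitivity on each stratum---is the standard one and is correct.

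There is, however, one concrete slip in the $\mathcal O_0$ step. You assert that $MA$ is transitive on $S\setminus\{\mathbf 1\}$ and use this to send $z$ to $-\mathbf 1$. This is false: both $M\simeq SO(n-1)$ and every $a_t$ fix $-\mathbf 1$ as well as $\mathbf 1$, so $MA$ stabilizes $-\mathbf 1$ and its orbits on $S\setminus\{\mathbf 1\}$ are $\{-\mathbf 1\}$ and $S\setminus\{\pm\mathbf 1\}$. Your own formula confirms this: with $s=\tanh t\in(-1,1)$, the map $z_1\mapsto (s+z_1)/(1+sz_1)$ takes $(-1,1)$ onto $(-1,1)$ and never reaches $-1$ for $z_1\in(-1,1)$. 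The fix is the one you already invoked, parenthetically, for $\mathcal O_3$: use the full parabolic $P=MAN$. The unipotent radical $N$ acts (via stereographic projection from $\mathbf 1$) by translations on $\mathbb R^{n-1}\simeq S\setminus\{\mathbf 1\}$, so $P$ is genuinely transitive there and can carry $z$ to $-\mathbf 1$ while fixing $x=\mathbf 1$. After that, your final step---$MA$ transitive on $S\setminus\{\pm\mathbf 1\}$---is correct and finishes the argument. The same imprecision occurs in your $\mathcal O_3$ paragraph (``$a_t$ together with $M$'' is not literally enough), but there your parenthetical about the Euclidean similarity group already supplies the missing translations.
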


See e.g. \cite {co} Proposition 1.2.\footnote{When $n=2$, there are \emph{six} orbits, \emph{two} of them being open.}

\section{The holomorphic family $\widetilde {\mathcal K}^{\boldsymbol \lambda}$}

Let $\lambda_1,\lambda_2,\lambda_3 \in \mathbb C$. A continuous trilinear form $\mathcal T$ on $\mathcal C^\infty(S)\times \mathcal C^\infty(S)\times \mathcal C^\infty(S)$ is \emph{invariant w.r.t. $(\pi_{\lambda_1}, \pi_{\lambda_2}, \pi_{\lambda_3})$} if
\[\mathcal T\big(\pi_{\lambda_1}(g) f_1, \pi_{\lambda_2}(g) f_2,\pi_{\lambda_3}(g) f_3\big)= \mathcal T(f_1,f_2,f_3)
\]
for all $f_1,f_2,f_3\in \mathcal C^\infty(S)$ and $g\in G$. 

By Schwartz's kernel theorem, such a trilinear form can also be viewed as a distribution on $S\times S\times S$, still denoted by $\mathcal T$. Let $\boldsymbol \lambda = (\lambda_1,\lambda_2,\lambda_3)$ and let ${\boldsymbol \pi}_{\boldsymbol \lambda} $  be the representation on $\mathcal C^\infty(S\times S\times S)$ (= the canonical completion of the tensor product $\pi_{\lambda_1}\otimes \pi_{\lambda_2}\otimes \pi_{\lambda_3}$) given by
\[{\boldsymbol \pi}_{\boldsymbol \lambda} (g) f (x,y,z) = \kappa(g^{-1},x)^{\lambda_1+\rho}\kappa(g^{-1},y)^{\lambda_2+\rho}\kappa(g^{-1},z)^{\lambda_3+\rho} f( g^{-1}(x), g^{-1}(y), g^{-1}(z))\ .
\]
Then the invariance condition simply reads
\[\mathcal T({\boldsymbol \pi}_{\boldsymbol \lambda}(g) f) = \mathcal T(f)\]
for all $f\in \mathcal C^\infty(S\times S \times S)$ and $g\in G$.
When this is verified, we also say simply that $\mathcal T$  is \emph{$\boldsymbol \lambda$-invariant}.

For $\boldsymbol \lambda\in \mathbb C^3$, let $Tri(\boldsymbol \lambda)$ be the space of $\boldsymbol \lambda$-invariant distributions on $S\times S\times S$.

\subsection{The construction of the generic family}

In \cite{co}, B. \O rsted and the present author constructed trilinear invariant forms for the  family of representations $(\pi_\lambda)_{\lambda\in \mathbb C}$. Given $\boldsymbol {\lambda} =(\lambda_1,\lambda_2,\lambda_3) \in \mathbb C^3$, let $\boldsymbol {\alpha} = (\alpha_1,\alpha_2,\alpha_3)$ be given by 
\begin{equation}\label{alphalambda}
\begin{split}
\alpha_1 = -\rho-\lambda_1+\lambda_2+\lambda_3\\
\alpha_2 = -\rho +\lambda_1-\lambda_2+\lambda_3\\
\alpha_3 = -\rho+\lambda_1+\lambda_2-\lambda_3
\end{split}
\end{equation}

Relations \eqref{alphalambda} can be inverted to yield
\begin{equation}\label{lambdaalpha}
\begin{split}
\lambda_1 = \rho+ \frac{\alpha_2+\alpha_3}{2}\\
\lambda_2 = \rho+ \frac{\alpha_3+\alpha_1}{2}\\
\lambda_3 = \rho+ \frac{\alpha_1+\alpha_2}{2}\ .
\end{split}
\end{equation}

In the sequel, $\boldsymbol \alpha = (\alpha_1,\alpha_2,\alpha_3)$ (called the \emph{geometric parameter} describing the singularities of $\mathcal K_{\boldsymbol \alpha}$) and $\boldsymbol \lambda = (\lambda_1,\lambda_2,\lambda_3)$ (called the \emph{spectral parameter} describing the invariance of  $\mathcal K^{\boldsymbol \lambda}$) will be considered as two associated parameters on the same space $\mathbb C^3$, related by the relations  \eqref{alphalambda} or \eqref{lambdaalpha}.

Let $\boldsymbol \lambda\in \mathbb C^3$ and let $\boldsymbol \alpha$ be its associated geometric parameter. For $f\in \mathcal C^\infty(S\times S\times S)$, let
\begin{equation}
\label{K}\mathcal K^{\boldsymbol \lambda}(f)=
\mathcal K_{\boldsymbol \alpha} (f) = \iiint_{S\times S\times S}\hskip-0.4cm \vert x-y\vert^{\alpha_3} \vert y-z\vert^{\alpha_1}\vert z-x\vert^{\alpha_2} f(x,y,z)\, dx \,dy \, dz\, .
\end{equation}
The formula defines (formally) a map on $\mathcal C^\infty(S\times \mathcal S\times S)$, which is \emph{invariant} under $\boldsymbol \pi_{\boldsymbol \lambda}$. Less formally, assume that $Supp(f)\subset \mathcal O_0$. Then the integral makes sense and thus defines a distribution $\mathcal K_{\boldsymbol \alpha,\, \mathcal O_0}$ on the open orbit $\mathcal O_0$.

The integral  \eqref{K} is absolutely convergent for all $f \in \mathcal C^\infty(S\times S\times S)$ if and only if
\[\Re \alpha_j > -(n-1),\ j=1,2,3,\qquad \Re (\alpha_1+\alpha_2+\alpha_3) >-2(n-1)\ .\]
The integral can be meromorphically continued to $\mathbb C^3$, with simple poles along four families of planes in $\mathbb C^3$, 

\[  \alpha_j = -(n-1)-2 k_j,\quad k_j \in \mathbb N\, \]
for $j=1,2$ or $3$ 
and 
\[ \alpha_1+\alpha_2+\alpha_3 = -2(n-1) -2k,\quad k\in \mathbb N\ .\]

A pole $\boldsymbol \alpha$ is said to be 
\smallskip

$\bullet$ \emph{of type } I${ }_j$ if $ \alpha_j \in -(n-1)-2\mathbb N$ (and in general \emph{of type }I if it is of type I$_j$ for some $j\in \{1,2,3\}$)
\smallskip

$\bullet$ \emph{of type } II if $\alpha_1+\alpha_1+\alpha_3 \in -2(n-1)-2\mathbb N$
\smallskip

$\bullet$ \emph{of type } I+II if $\boldsymbol \alpha$ is at the same time a pole of type I and a pole of type II.

\begin{definition}\label{generic}
A pole $\boldsymbol \alpha$ is said to be \emph{generic} if $\boldsymbol \alpha$ belongs to a \emph{unique} plane of poles.  
\end{definition}
Introduce the \emph{normalized} trilinear invariant functional $\widetilde {\mathcal K}_{\boldsymbol \alpha}$ defined by

\[\widetilde {\mathcal K}_{\boldsymbol \alpha}= \frac{\mathcal K_{\boldsymbol \alpha}}{\Gamma(\rho+\frac{\alpha_1}{2})\Gamma(\rho+\frac{\alpha_2}{2})\Gamma(\rho+\frac{\alpha_3}{2})\Gamma(\frac{\alpha_1+\alpha_2+\alpha_3}{2}+2\rho)}\ \ ,
\]
or similarly,
\[\widetilde K^{\boldsymbol \lambda} = \frac{K^{\boldsymbol \lambda}}{\Gamma(\frac{\lambda_1+\lambda_2+\lambda_3+\rho}{2})\Gamma(\frac{-\lambda_1+\lambda_2+\lambda_3+\rho}{2})\Gamma(\frac{\lambda_1-\lambda_2+\lambda_3+\rho}{2})\Gamma(\frac{\lambda_1+\lambda_2-\lambda_3+\rho}{2})}\ \ .
\]
Both definitions define \emph{entire} (distribution-valued) functions in $\mathbb C^3$, as the generic poles of $\mathcal K_{\boldsymbol \alpha}$ are simple and by using Hartog's prolongation principle.

Notice that a pole $\boldsymbol \alpha$ is generic if and only if among the four $\Gamma$ factors in the normalization process, exactly one is singular.
\subsection {The Bernstein-Reznikov integrals}

The following calculation was achieved in \cite{co} (see also \cite{ckop, d}), extending an earlier result in the case $n=2$ (see \cite{br}).
\begin{proposition}\label{BRint} 

Let $\boldsymbol \alpha = (\alpha_1,\alpha_2,\alpha_3)$ and assume it is not a pole. Then

\begin{equation*}
\begin{split}
&\int_{S\times S \times S} \vert x-y\vert^{\alpha_3} \vert y-z\vert^{\alpha_1} \vert z-x\vert^{\alpha_2} \,dx\,dy\,dz \ =\\
&\big(\frac{\pi}{2}\big)^{\frac{3}{2}(n-1)} 2^{\alpha_1+\alpha_2+\alpha_3}\,\frac{\Gamma\big(\frac{\alpha_1+\alpha_2+\alpha_3}{2} +2\rho\big)\,\Gamma(\frac{\alpha_1}{2}+\rho)\,\Gamma(\frac{\alpha_2}{2}+\rho)\,\Gamma(\frac{\alpha_3}{2}+\rho)}{\Gamma(\frac{\alpha_1+\alpha_2}{2}+2\rho)\,\Gamma(\frac{\alpha_2+\alpha_3}{2}+2\rho)\,\Gamma(\frac{\alpha_3+\alpha_1}{2}+2\rho)}
\end{split}
\end{equation*}

\end{proposition}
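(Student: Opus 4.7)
The plan is to reduce the triple integral on $S\times S\times S$ to a Euclidean integral via stereographic projection, and then to evaluate the Euclidean integral by inserting Schwinger (gamma-function) parameters so that every factor becomes Gaussian in the spatial variables.

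First, use $K$-invariance: since the integrand is invariant under the diagonal action of $K$ and $K$ acts transitively on $S$, I freeze $z=\mathbf 1$, picking up a factor $\mathrm{vol}(S)=2\pi^{n/2}/\Gamma(n/2)$, and what remains is a double integral over $S\times S$. Stereographic projection from $\mathbf 1$, using the standard identities
\begin{equation*}
|x-\mathbf 1|^2=\frac{4}{1+|u|^2},\qquad |x-y|^2=\frac{4|u-v|^2}{(1+|u|^2)(1+|v|^2)},\qquad dx=\Bigl(\frac{2}{1+|u|^2}\Bigr)^{n-1}du,
\end{equation*}
and the analogous ones for $(y,v)$, transforms the double integral, up to an overall constant of the form $2^{\alpha_1+\alpha_2+\alpha_3+2(n-1)}\cdot\mathrm{vol}(S)$, into
\begin{equation*}
\int_{\mathbb R^{n-1}\times\mathbb R^{n-1}}|u-v|^{\alpha_3}(1+|u|^2)^{-a}(1+|v|^2)^{-b}\,du\,dv,\quad a=\tfrac{\alpha_2+\alpha_3}{2}+2\rho,\ b=\tfrac{\alpha_1+\alpha_3}{2}+2\rho.
\end{equation*}

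To evaluate the Euclidean integral, I work in a strip (e.g.\ $\Re\alpha_3<0$ together with $\Re a,\Re b>0$) where everything is absolutely convergent, and insert Schwinger representations $(1+|u|^2)^{-a}=\Gamma(a)^{-1}\int_0^\infty e^{-s_1(1+|u|^2)}s_1^{a-1}\,ds_1$, the analogue for $(1+|v|^2)^{-b}$ with parameter $s_2$, and $|u-v|^{\alpha_3}=\Gamma(-\alpha_3/2)^{-1}\int_0^\infty e^{-t|u-v|^2}t^{-\alpha_3/2-1}\,dt$. Swapping the order of integration, the inner $(u,v)$-integral is a $2(n-1)$-dimensional Gaussian with quadratic form $\bigl(\begin{smallmatrix}s_1+t & -t\\ -t & s_2+t\end{smallmatrix}\bigr)\otimes I_{n-1}$, of value $\pi^{n-1}\bigl(s_1s_2+t(s_1+s_2)\bigr)^{-(n-1)/2}\,e^{-(s_1+s_2)}$. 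To decouple the remaining three-parameter integral I substitute $t=\sigma\,s_1s_2/(s_1+s_2)$, which turns the factor involving the determinant into $(1+\sigma)^{-(n-1)/2}$ and frees the $\sigma$-variable; then I set $(s_1,s_2)=(r\tau,r(1-\tau))$. The three resulting one-dimensional integrals are elementary beta/gamma integrals: the $\sigma$-integral produces $\Gamma(-\alpha_3/2)\Gamma(\rho+\alpha_3/2)/\Gamma(\rho)$, the $\tau$-integral produces $\Gamma(\alpha_1/2+\rho)\Gamma(\alpha_2/2+\rho)/\Gamma((\alpha_1+\alpha_2)/2+2\rho)$, and the $r$-integral produces $\Gamma((\alpha_1+\alpha_2+\alpha_3)/2+2\rho)$. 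The spurious factor $\Gamma(-\alpha_3/2)$ cancels against the one introduced by the Schwinger representation of $|u-v|^{\alpha_3}$, and the $\Gamma(a)\Gamma(b)$ from the other two Schwinger representations become the two remaining denominator factors.

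Finally I collect constants: combining $\mathrm{vol}(S)$, the $\pi^{n-1}$ from the Gaussian, the $2^{2(n-1)}$ from the stereographic Jacobians, and simplifying with the Legendre duplication formula $\Gamma(z)\Gamma(z+1/2)=2^{1-2z}\sqrt\pi\,\Gamma(2z)$ to eliminate $\Gamma(\rho)\Gamma(\rho+1/2)$, gives the prefactor $(\pi/2)^{3(n-1)/2}\cdot 2^{\alpha_1+\alpha_2+\alpha_3}$. The identity, valid on the absolute-convergence domain, extends to all of $\mathbb C^3$ (away from the poles of the Gamma factors) by analytic continuation, both sides being meromorphic in $\boldsymbol\alpha$. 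The main obstacle is not conceptual but bookkeeping: one must be careful about the strip of convergence chosen for the three Schwinger representations, make the scaling changes of variable so that the correct cancellations between $\Gamma(-\alpha_3/2)$'s occur, and track the many powers of $2$ and $\pi$ so that the final prefactor comes out exactly as stated.
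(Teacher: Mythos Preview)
The paper does not prove this proposition at all: it simply states the result and cites earlier work (\cite{co}, \cite{ckop}, \cite{d}, \cite{br}) where the computation was carried out. So there is no ``paper's own proof'' to compare against here.

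Your argument is a correct, self-contained derivation. Freezing $z=\mathbf 1$ by $K$-transitivity, passing to the noncompact picture via stereographic projection, and then linearizing the three power factors by Schwinger/Mellin parameters so that the spatial integral becomes Gaussian is a standard and efficient route to this identity; it is in the same spirit as the computations in the cited references. The substitutions $t=\sigma s_1s_2/(s_1+s_2)$ and $(s_1,s_2)=(r\tau,r(1-\tau))$ indeed decouple the parameter integral into a product of beta/gamma integrals exactly as you describe, and the $\Gamma(-\alpha_3/2)$ cancellation is genuine. Your own caveats are the right ones: one must fix a nonempty open region of $\boldsymbol\alpha$ where all three Schwinger representations and the Gaussian integral are simultaneously absolutely convergent (e.g.\ $-(n-1)<\Re\alpha_3<0$ together with $\Re a,\Re b$ large), and then invoke analytic continuation; and the bookkeeping of powers of $2$ and $\pi$ through $\mathrm{vol}(S)$, the stereographic Jacobians, the Gaussian prefactor and the duplication formula is the only place an error can creep in. None of this is a gap in the argument.
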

This formula can be equivalently written as
\begin{equation}\label{BRalpha}
\widetilde K_{\boldsymbol \alpha} (1,1,1) = \frac{\big(\frac{\pi}{2}\big)^{\frac{3}{2}(n-1)} 2^{\alpha_1+\alpha_2+\alpha_3}}{\Gamma(\frac{\alpha_1+\alpha_2}{2}+2\rho)\,\Gamma(\frac{\alpha_2+\alpha_3}{2}+2\rho)\,\Gamma(\frac{\alpha_3+\alpha_1}{2}+2\rho)}
\end{equation}
or, in terms of the spectral parameter
$\boldsymbol \lambda = (\lambda_1,\lambda_2,\lambda_3)$
 \begin{equation}\label{ival2}
 \widetilde {\mathcal K}^{\boldsymbol \lambda}(1,1,1)= \left({\frac{\sqrt {\pi}}{2}}\right)^{3(n-1)}
\frac{ 2^{\lambda_1+\lambda_2+\lambda_3}}
 {\Gamma(\rho+\lambda_1)\,\Gamma(\rho+\lambda_2)\,\Gamma(\rho+\lambda_3)} \ .
\end{equation}

\section{$K$-analysis of $\widetilde {\mathcal K}^{\boldsymbol \lambda}$}

\subsection{ $K$-invariant polynomial functions on $S\times S\times S$}
The group $K\simeq SO(n)$ acts on $S$ by rotations. A \emph{$K$-invariant} function on $S\times S\times S$ is  a function on $S\times S\times S$ which is invariant under the diagonal action of $K$ on $S\times S\times S$. A $K$-invariant distribution on $S\times S\times S$ is defined by duality.

A \emph{polynomial function} function on $S$ (resp. $S\times S\times S$) is by definition the restriction to $S$ (resp. to $S\times S\times S$) of a polynomial on $E$(resp. $E\times E\times E$). 

\begin{lemma}\label{Kinv}
 A $K$-invariant polynomial function on $S\times S\times S$ is the restriction to $S\times S\times S$ of a $K$-invariant polynomial on $E\times E\times E$.
\end{lemma}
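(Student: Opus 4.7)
The plan is the classical averaging argument, taking advantage of the fact that $K$ is compact and acts linearly on $E$.

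Let $f$ be a $K$-invariant polynomial function on $S\times S\times S$. By definition, there exists a polynomial $P$ on $E\times E\times E$ whose restriction to $S\times S\times S$ coincides with $f$. The polynomial $P$ itself is not assumed to be $K$-invariant, so the first step is to symmetrize it. Using the normalized Haar measure $dk$ on the compact group $K$, define
\[
\overline{P}(x,y,z) = \int_K P(k^{-1}x,\, k^{-1}y,\, k^{-1}z)\, dk\ .
\]
By construction $\overline{P}$ is $K$-invariant as a function on $E\times E\times E$.

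The key point to verify is that $\overline{P}$ is still a polynomial. Since $K$ acts linearly on $E$ via $SO(n)$, for each $k\in K$ the function $(x,y,z)\mapsto P(k^{-1}x,k^{-1}y,k^{-1}z)$ is a polynomial of degree at most $\deg P$, whose coefficients are polynomial functions of the matrix entries of $k$. Writing $P$ as a finite sum of monomials and interchanging the finite sum with the integral over $K$, each coefficient of $\overline{P}$ is an integral over $K$ of a continuous bounded function, hence a finite constant. Thus $\overline{P}$ is a polynomial on $E\times E\times E$ of degree at most $\deg P$.

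It remains to check that $\overline{P}$ restricts to $f$ on $S\times S\times S$. For $(x,y,z)\in S\times S\times S$ and any $k\in K$, the point $(k^{-1}x,k^{-1}y,k^{-1}z)$ again lies in $S\times S\times S$, so
\[
P(k^{-1}x,k^{-1}y,k^{-1}z) = f(k^{-1}x,k^{-1}y,k^{-1}z) = f(x,y,z)
\]
by the $K$-invariance of $f$. Integrating over $K$ against the normalized Haar measure gives $\overline{P}(x,y,z)=f(x,y,z)$ on $S\times S\times S$, which completes the argument. There is no real obstacle here; the only point requiring a moment of care is the interchange of the finite monomial expansion with the Haar integral, which is justified by the compactness of $K$ and the continuity in $k$ of the coefficients.
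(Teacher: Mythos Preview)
Your argument is correct and is essentially identical to the paper's proof: both average a polynomial extension over $K$ with the normalized Haar measure and then use the $K$-invariance of the restriction to see that the averaged polynomial still restricts to the given function. The only differences are cosmetic (you write $k^{-1}$ where the paper writes $k$, and you spell out why the average is still a polynomial).
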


\begin{proof} Let $P$ be a polynomial on $E\times E\times E$, and assume that its restriction to $S\times S\times S$, say $p=P_{\vert S\times S\times S}$ is $K$-invariant. Then let for $x,y,z\in E$
\[Q(x,y,z) = \int_{K} P(kx,ky,kz)\, dk\ ,
\]
where $dk$ is the normalized Haar measure on $K$. Then $Q$ is a $K$-invariant polynomial. When $x,y,z$ belong to $S$, $Q(x,y,z) = \int_{K} p(kx,ky,kz) \, dk = p(x,y,z)$ by the $K$-invariance of $p$, hence $p=Q_{\vert S\times S\times S}$.
\end{proof}
With some abuse, we will often use the same notation for a polynomial on $E\times E\times E$ and its restriction to $S\times S\times S$.

\begin{lemma}\label{Kpoldense}
 The space of $K$-invariant polynomial functions is dense in the space of $K$-invariant functions in $\mathcal C^\infty(S\times S\times S)$.

\end{lemma}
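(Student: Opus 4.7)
The plan is to combine two ingredients: density of polynomial functions in $\mathcal C^\infty(S\times S\times S)$ without any invariance condition, and the $K$-averaging operator, which preserves polynomials, produces $K$-invariant outputs, and acts as the identity on $K$-invariant functions.

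First I would establish density of polynomial functions in $\mathcal C^\infty(S\times S\times S)$. The natural route is spectral: the Laplace--Beltrami operator on $S$ has as eigenfunctions the spherical harmonics (restrictions of harmonic homogeneous polynomials on $E$), with eigenvalues $k(k+n-2)$; for any $f\in \mathcal C^\infty(S)$ the Fourier coefficients in this basis decrease faster than any power of $k$, so the eigenfunction expansion converges in the Fr\'echet topology of $\mathcal C^\infty(S)$ (by Sobolev embedding applied to the elliptic operator $\Delta$). Hence polynomial functions are dense in $\mathcal C^\infty(S)$. The analogous statement on $S\times S\times S$ follows by taking the product eigenbasis, or by applying the one-dimensional result successively in each variable; the resulting approximants are finite sums of tensor products of polynomials, hence polynomial functions on $E\times E\times E$.

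Given now a $K$-invariant $f\in \mathcal C^\infty(S\times S\times S)$, choose polynomials $P_n$ on $E\times E\times E$ whose restrictions to $S\times S\times S$ converge to $f$ in $\mathcal C^\infty$. Set
\[
Q_n(x,y,z) = \int_K P_n(kx,ky,kz)\, dk\ ;
\]
by the argument used in Lemma \ref{Kinv}, $Q_n$ is a $K$-invariant polynomial on $E\times E\times E$. The averaging operator $A\colon F\mapsto \int_K F(k\cdot,k\cdot,k\cdot)\,dk$ is continuous on $\mathcal C^\infty(S\times S\times S)$ (differentiation under the integral is legitimate because $K$ is compact and acts smoothly) and equals the identity on the $K$-invariant subspace, so $Q_{n\,|S\times S\times S} = A(P_{n\,|S\times S\times S})\to A(f)=f$ in $\mathcal C^\infty(S\times S\times S)$. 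This yields the desired density.

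The main obstacle is the passage from $\mathcal C^0$ to $\mathcal C^\infty$ density of polynomial restrictions: Stone--Weierstrass gives uniform density for free, but $\mathcal C^\infty$ convergence really requires the spectral/elliptic input above (or an alternative argument via convolution against smooth approximations of the identity on $K$ combined with $G$-smoothing, which is more cumbersome). Once that ingredient is in place, the $K$-averaging step is formal and the conclusion is immediate.
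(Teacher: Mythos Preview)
Your proof is correct and follows essentially the same route as the paper: density of polynomials in $\mathcal C^\infty(S\times S\times S)$ via spherical-harmonic expansion, followed by $K$-averaging. The only cosmetic difference is that the paper justifies the averaging step by noting that the Fr\'echet topology is defined by $K$-invariant Sobolev seminorms (so averaging is a contraction for each seminorm), whereas you argue continuity of the averaging operator directly; both arguments are equivalent here.
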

\begin{proof} By $K\times K\times K$-Fourier analysis, the space of polynomial functions on $S\times S\times S$ is dense in $\mathcal C^\infty(S\times S\times S)$. As the topology on $\mathcal C^\infty(S\times S\times S)$ can be defined by $K$-invariant semi-norms (e.g. the Sobolev semi norms $\Vert (\Delta_x+\Delta_y+\Delta_z)^N f \Vert_2$), $K$-invariant polynomial functions are dense in the space of $K$-invariant functions in $\mathcal C^\infty(S\times S\times S)$.

\end{proof}

\begin{lemma}\label{Kgen}
 The algebra of $K$-invariant polynomial functions on $S\times S\times S$ is generated (as an algebra) by the polynomial functions \[\quad \vert x-y\vert^2,\quad \vert y-z\vert^2,\quad \vert z-x\vert^2\ .\]
\end{lemma}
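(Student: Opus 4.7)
The plan is to reduce the problem, via Lemma \ref{Kinv}, to describing the algebra of $K$-invariant polynomials on $E\times E\times E$, and then to invoke the first fundamental theorem of invariant theory for the orthogonal group. By Lemma \ref{Kinv}, any $K$-invariant polynomial function on $S\times S\times S$ is the restriction of some $K$-invariant polynomial $Q$ on $E^3$, so it suffices to exhibit generators for the algebra $\mathbb C[E^3]^{SO(n)}$ and then restrict them to $S^3$.

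First I would reduce $SO(n)$-invariance to $O(n)$-invariance, which is where the hypothesis $n\geq 4$ enters. Given $(x,y,z)\in E^3$ and any $\tau\in O(n)$ with $\det\tau=-1$, the subspace $F=\mathrm{span}(\tau x,\tau y,\tau z)$ has dimension at most $3$, hence codimension at least $1$ in $E$ since $n\geq 4$. Therefore there exists a reflection $\sigma$ in a hyperplane containing $F$; such $\sigma$ fixes $\tau x,\tau y,\tau z$ pointwise and satisfies $\det\sigma=-1$, so $\sigma\tau\in SO(n)$. For any $SO(n)$-invariant polynomial $Q$ one then has
\[
Q(\tau x,\tau y,\tau z)=Q(\sigma\tau\cdot x,\sigma\tau\cdot y,\sigma\tau\cdot z)=Q(x,y,z),
\]
so $Q$ is in fact $O(n)$-invariant.

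Next I would apply the first fundamental theorem of invariant theory for $O(n)$ acting on $E^k$: the algebra of invariants is generated by the scalar products $\langle x_i,x_j\rangle$, $1\leq i\leq j\leq k$. Applied with $k=3$, this gives that $\mathbb C[E^3]^{O(n)}$ is generated by the six polynomials $\langle x,x\rangle$, $\langle y,y\rangle$, $\langle z,z\rangle$, $\langle x,y\rangle$, $\langle y,z\rangle$, $\langle z,x\rangle$. Restricting to $S\times S\times S$, the first three become the constant $1$, and one is left with the three scalar products $\langle x,y\rangle,\langle y,z\rangle,\langle z,x\rangle$ as generators of the algebra of $K$-invariant polynomial functions on $S\times S\times S$. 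Finally, the identity
\[
\vert x-y\vert^2=2-2\langle x,y\rangle
\]
and its cyclic analogues show that this algebra is equally generated by the three squared distances $\vert x-y\vert^2,\vert y-z\vert^2,\vert z-x\vert^2$, as claimed.

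The only substantive point is the reduction from $SO(n)$-invariance to $O(n)$-invariance, which truly uses $n\geq 4$; once that is in place, the statement is a direct consequence of the classical first fundamental theorem.
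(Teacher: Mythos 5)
Your proof is correct and follows essentially the same route as the paper: reduce to invariant polynomials on $E\times E\times E$ via Lemma \ref{Kinv}, invoke the first fundamental theorem, restrict to the sphere, and convert scalar products to squared distances. The only difference is that you spell out the reduction from $SO(n)$-invariance to $O(n)$-invariance for $n\geq 4$ (via a hyperplane reflection fixing the span of the three vectors), a point the paper merely states in a remark following the lemma; your explicit argument is a welcome addition.
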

\begin{proof} By the \emph{first fundamental theorem} (see e.g. \cite{gw}), the algebra of $K$-invariant polynomials on $E\times E\times E$ is generated by the polynomials
\[ \vert x\vert^2,\quad \vert y\vert^2,\quad \vert z\vert^2,\quad <x,y>, \quad<y,z>,\quad <z,x>\ .
\]
Hence, by Lemma \ref{Kinv}, the algebra of $K$-invariant polynomial functions on $S\times S\times S$ is generated by the restrictions to $S\times S\times S$ of the previous polynomials, that is to say by
\[1,\quad <x,y>, \quad<y,z>,\quad <z,x>\ .
\]
But for $x,y\in S$, $<x,y> = 1-\frac{1}{2}\vert x-y\vert^2$, so that  $\{\vert x-y\vert^2, \vert y-z\vert^2, \vert z-x\vert^2\}$ is also a generating family.
\end{proof}

\noindent
{\bf Remark. }The assumption $n\geq 4$ is important for this lemma. When $n=3$,  the algebra of polynomials on $E\times E\times E$ which are invariant under $SO(3)$  is generated by $\vert x\vert^2, \vert y\vert^2, \vert z\vert^2, <x,y>, <y,z>, <z,x>$ and $\det(x,y,z)$. For $n\geq 4$, the algebra of $SO(n)$-invariant polynomials is the same as the algebra of $O(n)$-invariant polynomials.
\medskip

\begin{lemma}\label{Kdist0}
 Let $\mathcal T$ be a $K$-invariant distribution on $S\times S\times S$. Then $\mathcal T \equiv 0$ if and only if $(\mathcal T,p)=0$ for any $K$-invariant  polynomial function $p$ on $S\times S\times S$.
\end{lemma}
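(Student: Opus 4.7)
One direction is immediate: if $\mathcal T \equiv 0$ then of course it vanishes against every $K$-invariant polynomial. The substance is the converse, so assume $(\mathcal T,p)=0$ for every $K$-invariant polynomial function $p$ on $S\times S\times S$, and take an arbitrary test function $f\in \mathcal C^\infty(S\times S\times S)$; the goal is to show $(\mathcal T,f)=0$.

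The plan is to exploit the $K$-invariance to replace $f$ by its $K$-average, and then invoke Lemma \ref{Kpoldense}. Define
\[
f^K(x,y,z) = \int_K f(k^{-1}x,k^{-1}y,k^{-1}z)\, dk,
\]
which is a $K$-invariant element of $\mathcal C^\infty(S\times S\times S)$ (the integrand depends smoothly on $k$ with values in a Fr\'echet space, and the integral converges there). By compactness of $K$ and continuity of the $K$-action on $\mathcal C^\infty(S\times S\times S)$ one may commute $\mathcal T$ with the integral, hence
\[
(\mathcal T, f^K) = \int_K (\mathcal T, k\cdot f)\, dk = \int_K (\mathcal T, f)\, dk = (\mathcal T, f),
\]
the middle equality being exactly the $K$-invariance of $\mathcal T$.

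Now $f^K$ is a $K$-invariant smooth function on $S\times S\times S$, so by Lemma \ref{Kpoldense} there is a sequence $(p_n)$ of $K$-invariant polynomial functions on $S\times S\times S$ that converges to $f^K$ in the $\mathcal C^\infty$ topology. By hypothesis $(\mathcal T, p_n)=0$ for every $n$, and by continuity of the distribution $\mathcal T$ we conclude
\[
(\mathcal T, f) = (\mathcal T, f^K) = \lim_{n\to\infty}(\mathcal T, p_n) = 0.
\]
Since $f$ was arbitrary, $\mathcal T\equiv 0$.

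The proof is essentially a formal packaging of two ingredients already in hand, so there is no serious obstacle; the only point requiring minimal care is the commutation of $\mathcal T$ with the Haar integral over $K$, which is standard given the smoothness of the action and the compactness of $K$.
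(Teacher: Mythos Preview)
Your proof is correct and follows essentially the same approach as the paper: average $f$ over $K$ to reduce to a $K$-invariant test function, then use Lemma~\ref{Kpoldense} to approximate by $K$-invariant polynomials. The only cosmetic difference is the order of presentation (the paper first deduces vanishing on all $K$-invariant smooth functions, then averages), and your extra remark on commuting $\mathcal T$ with the Haar integral is a welcome bit of care.
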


\begin{proof} The only if part being trivial, assume that $\mathcal T$ satisfies $(\mathcal T,p)=0$ for any $K$-invariant polynomial function $p$ on $S\times S\times S$. Lemma \ref{Kpoldense} implies that $(\mathcal T,\varphi)=0$ for all $K$-invariant $\mathcal C^\infty$ functions $\varphi$ on $S\times S\times S$. Let $f\in \mathcal C^\infty (S\times S\times S)$. For $k\in K$, let $f_k$ be the function defined by
$f_k(x,y,z)) = f(kx,ky,kz)$. By $K$-invariance of $\mathcal T$, $(\mathcal T,f_k) = (\mathcal T,f)$. Let  $\varphi$ be  defined by $\varphi(x,y,z)) = \int_K f(k x,ky,kz))dk$. Then $\varphi$ is a $K$-invariant $\mathcal C^\infty$ function on $(S\times S\times S)$. Now
 \[(\mathcal T,f) = (\mathcal T, \int_K f_k\, dk) = (\mathcal T,\varphi) =0\ ,\]
hence $\mathcal T=0$.
 \end{proof}
\subsection{$K$-analysis of $\widetilde {\mathcal K}^{\boldsymbol \lambda}$}

For $a_1,a_2,a_3\in \mathbb N$, let $p_{a_1,a_2,a_3}$ be the polynomial function on $S\times S\times S$ defined by
\[p_{a_1,\,a_2,\,a_3} (x,y,z) = \vert x-y\vert^{2a_3} \vert y-z\vert^{2a_1} \vert z-x\vert^{2a_2}\ .
\]
\begin{proposition}\label{Kvan}
 Let $\mathcal T$ be a $K$-invariant distribution on $S\times S\times S$. Then $\mathcal T\equiv 0$ if and only if $\mathcal T (p_{a_1,\,a_2\,,a_3}) =0$ for any $a_1,a_2,a_3\in \mathbb N$.
\end{proposition}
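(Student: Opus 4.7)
The plan is to reduce the statement to the previous three lemmas, which together already contain essentially everything needed. The ``only if'' direction is trivial since each $p_{a_1,a_2,a_3}$ is a smooth function on $S\times S\times S$, so $\mathcal T(p_{a_1,a_2,a_3})$ makes sense for a distribution $\mathcal T$ and vanishes when $\mathcal T\equiv 0$.

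For the ``if'' direction, I would argue as follows. By Lemma \ref{Kdist0}, in order to conclude $\mathcal T\equiv 0$ it is enough to show that $(\mathcal T,p)=0$ for every $K$-invariant polynomial function $p$ on $S\times S\times S$. By Lemma \ref{Kgen}, the algebra of such functions is generated by $\vert x-y\vert^2$, $\vert y-z\vert^2$ and $\vert z-x\vert^2$. Hence any $K$-invariant polynomial function $p$ can be written as a finite linear combination
\[
p = \sum_{(a_1,a_2,a_3)} c_{a_1,a_2,a_3}\,p_{a_1,a_2,a_3}\ ,
\]
with $c_{a_1,a_2,a_3}\in \mathbb C$. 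By linearity of $\mathcal T$,
\[
(\mathcal T,p) = \sum_{(a_1,a_2,a_3)} c_{a_1,a_2,a_3}\,\mathcal T(p_{a_1,a_2,a_3}) = 0
\]
under the hypothesis. Lemma \ref{Kdist0} then yields $\mathcal T\equiv 0$.

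There is no real obstacle here: the proposition is a direct corollary of the two structural lemmas preceding it, which is precisely why these lemmas were proved first. The only point that deserves attention is to notice that the monomials $p_{a_1,a_2,a_3}$ do span the whole algebra generated by $\vert x-y\vert^2$, $\vert y-z\vert^2$ and $\vert z-x\vert^2$ (including the constant $1=p_{0,0,0}$), so that no additional ``low order'' test functions need to be added to the family.
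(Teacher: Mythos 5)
Your argument is correct and is exactly the paper's proof: the author also deduces the proposition directly from Lemma \ref{Kgen} and Lemma \ref{Kdist0}, with the monomials in the three generators being precisely the $p_{a_1,a_2,a_3}$. You have merely spelled out the linearity step that the paper leaves implicit.
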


\begin{proof}
The proposition is a consequence of   Lemma \ref{Kgen} and Lemma \ref{Kdist0}.
\end{proof}

Recall the \emph{Pochhammer symbol} $(x)_n$ which is defined for $x\in \mathbb C$ and $n\in \mathbb N$ by 
\[(x)_0 = 1,\quad (x)_1 = x,\quad (x)_n = x(x+1)\dots (x+n-1)\ . \]
Observe that $(x)_n=0$ if and only if $x\in  -\mathbb N$ and $n> -x$.

\begin{proposition} Let $\boldsymbol \alpha\in \mathbb C^3$, and let $a_1,a_2,a_3\in \mathbb N$.

\begin{equation}\label{Kpalpha}
I_{\boldsymbol \alpha} (a_1,a_2,a_3) : = \widetilde {\mathcal K}_{\boldsymbol \alpha} (p_{a_1,\,a_2,\,a_3}) =\quad \big(\frac{\pi}{2}\big)^{\frac{3}{2}(n-1)} 2^{\alpha_1+\alpha_2+\alpha_3}2^{ 2(a_1+a_2+a_3)}\dots
\end{equation}
\begin{equation*}
\frac{ \big(\frac{\alpha_1+\alpha_2+\alpha_3}{2}+2\rho\big)_{a_1+a_2+a_3}
\big(\frac{\alpha_1}{2} +\rho\big)_{a_1}\big(\frac{\alpha_2}{2} +\rho\big)_{a_2}\big(\frac{\alpha_3}{2} +\rho\big)_{a_3}} 
{\Gamma(\frac{\alpha_1+\alpha_2}{2}+2\rho+a_1+a_2)\,\Gamma(\frac{\alpha_2+\alpha_3}{2}+2\rho+a_2+a_3)\,\Gamma(\frac{\alpha_3+\alpha_1}{2}+2\rho+a_3+a_1)}\ .
\end{equation*}
\end{proposition}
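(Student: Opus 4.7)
The plan is to reduce the computation of $I_{\boldsymbol\alpha}(a_1,a_2,a_3)$ to the Bernstein--Reznikov integral of Proposition \ref{BRint} by the following trivial but decisive observation: the integrand defining $\mathcal K_{\boldsymbol\alpha}$ is the product $|x-y|^{\alpha_3}|y-z|^{\alpha_1}|z-x|^{\alpha_2}$, so multiplication by $p_{a_1,a_2,a_3}$ simply shifts each $\alpha_j$ to $\alpha_j + 2a_j$. Setting $\mathbf a = (a_1,a_2,a_3)$, we therefore have the identity
\[\mathcal K_{\boldsymbol\alpha}(p_{a_1,a_2,a_3}) = \mathcal K_{\boldsymbol\alpha + 2\mathbf a}(1,1,1)\]
as meromorphic functions of $\boldsymbol\alpha$, and as an absolutely convergent integral in the nonempty open region $\Re\alpha_j > -(n-1) - 2a_j$ together with $\Re(\alpha_1+\alpha_2+\alpha_3) > -2(n-1) - 2(a_1+a_2+a_3)$.

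On that region I would apply Proposition \ref{BRint} at the shifted parameters $\boldsymbol\alpha + 2\mathbf a$, obtaining an explicit quotient of $\Gamma$ functions with $\alpha_j$ replaced by $\alpha_j + 2a_j$ and with the prefactor $2^{\alpha_1+\alpha_2+\alpha_3+2(a_1+a_2+a_3)}(\pi/2)^{3(n-1)/2}$. To pass to $\widetilde{\mathcal K}_{\boldsymbol\alpha}(p_{a_1,a_2,a_3})$ one then divides by the normalization $\Gamma(\rho+\tfrac{\alpha_1}{2})\Gamma(\rho+\tfrac{\alpha_2}{2})\Gamma(\rho+\tfrac{\alpha_3}{2})\Gamma(2\rho+\tfrac{\alpha_1+\alpha_2+\alpha_3}{2})$ evaluated at $\boldsymbol\alpha$, \emph{not} at $\boldsymbol\alpha + 2\mathbf a$; this is the essential point. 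The four $\Gamma$ factors produced in the numerator of Proposition \ref{BRint} pair with these four denominators by the shift identity $\Gamma(z+k)/\Gamma(z) = (z)_k$, yielding the Pochhammer symbols
\[\bigl(\rho+\tfrac{\alpha_j}{2}\bigr)_{a_j} \quad (j=1,2,3), \qquad \bigl(2\rho+\tfrac{\alpha_1+\alpha_2+\alpha_3}{2}\bigr)_{a_1+a_2+a_3},\]
while the three $\Gamma$'s in the denominator of Proposition \ref{BRint} are left untouched, yielding exactly the formula claimed.

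Finally, both sides of the claimed equation are entire in $\boldsymbol\alpha \in \mathbb C^3$: the left-hand side because $\widetilde{\mathcal K}_{\boldsymbol\alpha}$ is entire by construction and $p_{a_1,a_2,a_3}$ is a fixed smooth test function, and the right-hand side because the Pochhammer symbols are polynomials in $\boldsymbol\alpha$ and the three $1/\Gamma(\cdots)$ factors in the denominator are entire. Since the identity has been established on a nonempty open set, it extends to all of $\mathbb C^3$ by the identity principle.

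I do not anticipate any substantive obstacle. The only two points requiring care are the bookkeeping of the shift --- specifically, normalizing at $\boldsymbol\alpha$ rather than at $\boldsymbol\alpha + 2\mathbf a$, which is exactly what converts four of the seven $\Gamma$ factors into the advertised Pochhammer symbols --- and the verification that the convergence region for Proposition \ref{BRint} shifts appropriately with $\mathbf a$ so that an open set of genuinely convergent $\boldsymbol\alpha$ remains available as a base for the analytic extension.
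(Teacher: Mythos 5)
Your proposal is correct and follows exactly the paper's own argument: the shift $\mathcal K_{\boldsymbol\alpha}(p_{a_1,a_2,a_3})=\mathcal K_{\boldsymbol\alpha+2\mathbf a}(1,1,1)$, evaluation by the Bernstein--Reznikov formula, conversion of four $\Gamma$ ratios into Pochhammer symbols via the normalization at $\boldsymbol\alpha$, and extension to all of $\mathbb C^3$ by holomorphy of both sides. The bookkeeping you flag (normalizing at $\boldsymbol\alpha$ rather than $\boldsymbol\alpha+2\mathbf a$) is precisely the step the paper compresses into ``take into account the normalizing factor.''
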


\begin{proof} Suppose first that $\boldsymbol \alpha$ is not a pole of $\mathcal K_{\boldsymbol \alpha}$. Then \[\mathcal K_{\boldsymbol \alpha}(p_{a_1,a_2,a_3}) = \mathcal K_{\alpha_1+2a_1,\,\alpha_2+2a_2, \,\alpha_3 +2a_3}(1\otimes1\otimes1)\ ,\]
whose value is known by \eqref{BRalpha}. Then take into account the normalizing factor, to get \eqref{Kvan} for $\boldsymbol \alpha$ not a pole. The two handsides of \eqref{Kpalpha} being holomorphic on $\mathbb C^3$ are equal for all $\boldsymbol \alpha$ in $\mathbb C^3$.
\end{proof}

For further use, we also give the same result formulated in terms of the spectral parameter.
\begin{equation}\label{Kplambda}
\begin{split}
&I^{\boldsymbol \lambda}(a_1,a_2,a_3) = \widetilde {\mathcal K}^{\boldsymbol \lambda} (p_{a_1,a_2,a_3}) \\ &=(\frac{\sqrt{\pi}}{2})^{3(n-1)}2^{\lambda_1+\lambda_2+\lambda_3}2^{2(a_1+a_2+a_3)}\big(\frac{\lambda_1+\lambda_2 +\lambda_3+\rho}{2}\big)_{a_1+a_2+a_3}\\
&\frac{(\frac{-\lambda_1+\lambda_2 +\lambda_3+\rho}{2})_{a_1}(\frac{\lambda_1-\lambda_2 +\lambda_3+\rho}{2})_{a_2}(\frac{\lambda_1+\lambda_2 -\lambda_3+\rho}{2})_{a_3}}
{\Gamma(\lambda_1 +\rho+a_2+a_3)\Gamma(\lambda_2 +\rho+a_3+a_1)\Gamma(\lambda_3 +\rho+a_1+a_2)}\ .
\end{split}
\end{equation}

\subsection{The support of $\widetilde {\mathcal K}^{\boldsymbol \lambda}$}

\begin{proposition}\label{suppK} Let $\boldsymbol \lambda\in\mathbb C^3$.
\smallskip

$i)$ if $\boldsymbol \lambda$ is not a pole, $Supp(\widetilde K^{\boldsymbol \lambda} )=S\times S\times S$.
\smallskip

$ii)$ if $\boldsymbol \lambda$ is a pole of type I$_j, j\in \{1,2,3\}$, then
$Supp( \widetilde K^{\boldsymbol \lambda} )\subset \overline {\mathcal O_j}$. 
\smallskip

$iii)$ if  $\boldsymbol \lambda$ is a \emph{generic} pole of type I$_j, j\in \{1,2,3\}$, then $Supp( \widetilde K^{\boldsymbol \lambda} )= \overline {\mathcal O_j}$.
\smallskip

$iv)$ if $\boldsymbol \lambda$ is a pole of type II,
then $Supp(\widetilde K^{\boldsymbol \lambda} )\subset \mathcal O_4$.
\end{proposition}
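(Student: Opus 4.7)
The plan is to localize $\widetilde{\mathcal K}^{\boldsymbol\lambda}$ to an appropriate open $U\subset S\times S\times S$ in each case, exploiting the principle that $\mathcal K^{\boldsymbol\alpha}|_U$ is meromorphic only in those spectral directions whose corresponding kernel singularity actually survives in $U$, while the four factors $1/\Gamma$ in the denominator of $\widetilde{\mathcal K}^{\boldsymbol\alpha}$ contribute entire zeros in \emph{all} four such directions. Whenever $\boldsymbol\alpha_0$ lies on a hyperplane absent from the pole set of $\mathcal K^{\boldsymbol\alpha}|_U$, the matching $1/\Gamma$-zero is unmatched and forces $\widetilde{\mathcal K}^{\boldsymbol\alpha_0}|_U=0$; simultaneous poles of $\mathcal K^{\boldsymbol\alpha}|_U$ along other hyperplanes cancel against their own $1/\Gamma$-zeros in the Hartogs sense used already to make $\widetilde{\mathcal K}^{\boldsymbol\lambda}$ entire.

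For (i), $\boldsymbol\alpha$ is not a pole, so all four $\Gamma$-factors are finite and non-zero and $\widetilde{\mathcal K}^{\boldsymbol\lambda}$ is a non-zero scalar multiple of $\mathcal K^{\boldsymbol\lambda}$. Its restriction to the open dense orbit $\mathcal O_0$ is integration against the smooth nowhere-vanishing function $\vert x-y\vert^{\alpha_3}\vert y-z\vert^{\alpha_1}\vert z-x\vert^{\alpha_2}$, so the support contains $\mathcal O_0$; closedness of support and density of $\mathcal O_0$ then yield $\mathrm{Supp}(\widetilde{\mathcal K}^{\boldsymbol\lambda})=S\times S\times S$.

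For (ii) take $j=3$ (the other cases are symmetric) and $U=\{x\neq y\}$. On $U$ the factor $\vert x-y\vert^{\alpha_3}$ is smooth and contributes no pole in $\alpha_3$; moreover $U\cap\mathcal O_4=\emptyset$, so no triple coincidence survives and no type II pole occurs on $U$ either. At a type $\mathrm I_3$ pole $\boldsymbol\alpha_0$, $1/\Gamma(\rho+\alpha_3/2)$ vanishes, so $\widetilde{\mathcal K}^{\boldsymbol\alpha_0}|_U=0$ and $\mathrm{Supp}(\widetilde{\mathcal K}^{\boldsymbol\alpha_0})\subset\overline{\mathcal O_3}$. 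Statement (iv) is proved identically with $U=(S\times S\times S)\setminus\mathcal O_4$: on such $U$ no triple coincidence occurs, so $\mathcal K^{\boldsymbol\alpha}|_U$ has no type II pole, while $1/\Gamma((\alpha_1+\alpha_2+\alpha_3)/2+2\rho)$ vanishes at the type II pole.

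For (iii), $\mathrm{Supp}\subset\overline{\mathcal O_j}$ comes from (ii). Since the support is a closed $G$-invariant subset of $\overline{\mathcal O_j}=\mathcal O_j\sqcup\mathcal O_4$, it equals $\emptyset$, $\mathcal O_4$, or $\overline{\mathcal O_j}$. Formula \eqref{Kplambda} with $a_j=0$ gives $\widetilde{\mathcal K}^{\boldsymbol\alpha_0}(p_{a_1,a_2,a_3})\neq 0$ for infinitely many $(a_1,a_2,a_3)$ with $a_1+a_2+a_3$ arbitrarily large (by genericity all Pochhammer factors are non-zero, and the $\Gamma$'s in the denominator are finite once the arguments are large enough), ruling out $\emptyset$; and since $\widetilde{\mathcal K}^{\boldsymbol\alpha_0}$ has finite order on the compact manifold $S\times S\times S$ and $p_{a_1,a_2,a_3}$ vanishes to order $2(a_1+a_2+a_3)$ along the diagonal, support in $\mathcal O_4$ would force $\widetilde{\mathcal K}^{\boldsymbol\alpha_0}(p_{a_1,a_2,a_3})=0$ for large $a_1+a_2+a_3$, a contradiction. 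The main obstacle is this last step of (iii): excluding collapse onto the small orbit $\mathcal O_4$ requires combining Proposition \ref{Kvan}, the closed formula \eqref{Kplambda}, and the finiteness of order of a distribution on a compact manifold, whereas the plain localization arguments of (ii) and (iv) are essentially immediate. An alternative for (iii) would be to invoke the explicit residue formula of \cite{bc}, which identifies $\widetilde{\mathcal K}^{\boldsymbol\alpha_0}$, up to a non-zero constant, with an integration over $\overline{\mathcal O_j}$ of a covariant transverse differential operator visibly non-vanishing on $\mathcal O_j$.
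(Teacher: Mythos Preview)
Your proof is correct. Parts (i) and (iii) follow the paper essentially verbatim: in (i) the non-vanishing on $\mathcal O_0$ forces full support, and in (iii) you use the $G$-invariance of the support together with the formula \eqref{Kpalpha} evaluated at $(a_1,a_2,0)$ with $a_1,a_2$ large to exclude collapse onto $\mathcal O_4$, exactly as the paper does.

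For (ii) and (iv) you take a genuinely different route. The paper argues that at a \emph{generic} pole only one $\Gamma$-factor is singular, so $\widetilde{\mathcal K}^{\boldsymbol\lambda_0}$ is a non-zero multiple of the residue of $\mathcal K^{\boldsymbol\lambda}$, whose support is read off from the explicit residue formulas of \cite{bc}; the inclusion at non-generic poles then follows by analytic continuation within each hyperplane. Your localization argument instead bypasses \cite{bc} entirely: on $U=\{x\neq y\}$ (resp.\ $U=(S\times S\times S)\setminus\mathcal O_4$) the submanifolds $\mathcal O_1,\mathcal O_2$ (resp.\ $\mathcal O_1,\mathcal O_2,\mathcal O_3$) are pairwise disjoint, so a partition of unity reduces $\mathcal K^{\boldsymbol\alpha}|_U$ to a sum of pieces each having only simple type~$\mathrm I_i$ poles in a single $\alpha_i$, hence $\mathcal K^{\boldsymbol\alpha}|_U\big/\prod_i\Gamma(\rho+\alpha_i/2)$ is entire and the extra zero from $1/\Gamma(\rho+\alpha_3/2)$ (resp.\ $1/\Gamma(\sigma)$) kills $\widetilde{\mathcal K}^{\boldsymbol\alpha_0}|_U$. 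This is more elementary and self-contained; the only place it is compressed is the assertion that the surviving poles on $U$ are simple and separated, which is exactly the partition-of-unity step above and would be worth one explicit sentence. The paper's approach, in return, yields more: it identifies $\widetilde{\mathcal K}^{\boldsymbol\lambda_0}$ with an explicit residue, information used later in Section~6.
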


\begin{proof} The support of any $\boldsymbol \lambda$-invariant distribution is a closed $G$-invariant subset of $S\times S\times S$. If $\boldsymbol \lambda$ is not a of pole, then $\widetilde K^{\boldsymbol \lambda} $ is a non zero multiple of $K^{\boldsymbol \lambda}$ and hence has the same support. The restriction of  $K^{\boldsymbol \lambda}$ to $\mathcal O_0$ is certainly not $0$, and hence $Supp(K^{\boldsymbol \lambda}) = S\times S\times S$. Hence $i)$ is verified.

Let $\boldsymbol \lambda_0$ be a generic pole (recall Definition \ref{generic}), either of type $I$ or of type $II$. The normalizing factor in the definition of $\widetilde K^{\boldsymbol \lambda}$ is a product of three  $\Gamma$ factors which are non singular at $\boldsymbol \lambda_0$ and only one $\Gamma$ factor which has a simple pole at $\boldsymbol \lambda_0$.  Hence, $\widetilde K^{\boldsymbol \lambda_0}$ is equal (up to a non zero scalar) to the residue of $K^{\boldsymbol \lambda}$ at $\boldsymbol \lambda_0$. The residues are computed in \cite{bc}, at least generically. More precisely, in each plane of poles, the residues are computed for $\boldsymbol \lambda$ in a dense open subset. From the formul\ae   \ for the residues, it is easy to deduce the inclusion of their support as indicated in the proposition. The general result follows by analytic continuation  in each plane of poles. Hence  $ii)$ and $iv)$ hold true.

Assume now that  $\boldsymbol \lambda$ is a \emph{generic} pole of type I, e.g. of type $I_3$. As $Supp(\widetilde {\mathcal K}^{\boldsymbol \lambda})\subset  \overline {\mathcal O_3}$ and $Supp(\widetilde K^{\boldsymbol \lambda})$ is invariant under $G$, either $Supp(\widetilde {\mathcal K}^{\boldsymbol \lambda})= \overline {\mathcal O_3}$ or $Supp(\widetilde {\mathcal K}^{\boldsymbol \lambda}) \subset \mathcal O_4$. So, assume that $Supp(\widetilde {\mathcal K}^{\boldsymbol \lambda}) \subset\mathcal O_4$. By Schwartz's theorem on the local structure of distributions supported by a submanifold and  the compactness of $S$, if a test function $\varphi\in \mathcal C^\infty(S\times S\times S)$ vanishes on $\mathcal O_4$ at a sufficiently large order, then $\widetilde {\mathcal K}^{\boldsymbol \lambda}(\varphi)=0$. For $a_1,a_2\in \mathbb N$  large enough, the function $p_{a_1,\,a_2,\,0}$ vanishes on $\mathcal O_4$ at an arbitrary large order. A careful  inspection of $\eqref{Kvan}$ shows that, for $a_1,a_2$ large, $\widetilde {\mathcal K}^{\boldsymbol \lambda}(p_{a_1,\,a_2,\,0})\neq 0$, thus getting a contradiction. Hence $Supp(\widetilde {\mathcal K}^{\boldsymbol \lambda})=  \overline {\mathcal O_3}$.
\end{proof}

\section{ The zero set $ Z$ of $\widetilde {\mathcal K}^{\boldsymbol \lambda}$}

\begin{proposition} For $\boldsymbol \alpha \in \mathbb C^3$, the trilinear form $\widetilde {\mathcal K}_{\boldsymbol \alpha}$ is identically $0$ if and only if $I_{\boldsymbol \alpha} (a_1,a_2,a_3) = 0$ for any $a_1,a_2,a_3\in \mathbb N$.
\end{proposition}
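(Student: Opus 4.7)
The \emph{only if} direction is trivial. For the converse, the plan is to reduce the claim to Proposition \ref{Kvan} by observing that $\widetilde{\mathcal K}_{\boldsymbol \alpha}$, viewed as a distribution on $S\times S\times S$, is $K$-invariant in the ordinary sense.

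First I would note that for $k\in K\simeq SO(n)$, the conformal factor satisfies $\kappa(k,x)\equiv 1$ (since $K$ acts by Euclidean isometries on $S$). Consequently, the representation $\boldsymbol \pi_{\boldsymbol \lambda}$ restricted to $K$ is independent of $\boldsymbol \lambda$ and coincides with the ordinary diagonal action
\[
\boldsymbol \pi_{\boldsymbol \lambda}(k) f(x,y,z) = f(k^{-1}x, k^{-1}y, k^{-1}z).
\]
Since $\widetilde{\mathcal K}_{\boldsymbol \alpha}$ is $\boldsymbol \pi_{\boldsymbol \lambda}$-invariant by construction (invariance is preserved under multiplication by the scalar $\Gamma$-factors and under analytic continuation), it is $K$-invariant in the standard sense.

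Now assume that $I_{\boldsymbol \alpha}(a_1,a_2,a_3) = \widetilde{\mathcal K}_{\boldsymbol \alpha}(p_{a_1,a_2,a_3}) = 0$ for every $(a_1,a_2,a_3) \in \mathbb N^3$. By Lemma \ref{Kgen}, the functions $p_{a_1,a_2,a_3}$ span (as monomials in the three generators $\vert x-y\vert^2, \vert y-z\vert^2, \vert z-x\vert^2$) the algebra of $K$-invariant polynomial functions on $S\times S\times S$. Hence $\widetilde{\mathcal K}_{\boldsymbol \alpha}$ vanishes on all $K$-invariant polynomial functions. Proposition \ref{Kvan} (equivalently, the combination of Lemma \ref{Kpoldense} and the averaging argument of Lemma \ref{Kdist0}) then forces $\widetilde{\mathcal K}_{\boldsymbol \alpha} \equiv 0$.

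The only potential subtlety is the passage from invariance under $\boldsymbol \pi_{\boldsymbol \lambda}$ to ordinary $K$-invariance, but this is immediate from $\kappa(k,\cdot)\equiv 1$ on $K$, so there is no real obstacle here; the proposition is essentially a repackaging of Proposition \ref{Kvan} applied to the $K$-invariant distribution $\widetilde{\mathcal K}_{\boldsymbol \alpha}$.
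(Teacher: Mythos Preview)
Your proof is correct and follows exactly the same approach as the paper: observe that $\widetilde{\mathcal K}_{\boldsymbol \alpha}$ is $K$-invariant (since $\kappa(k,\cdot)\equiv 1$ for $k\in K$) and then apply Proposition~\ref{Kvan}. The paper's proof is in fact just two sentences to this effect; your version simply spells out the $K$-invariance in more detail.
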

\begin{proof} Viewed as a distribution on $S\times S\times S$, $\widetilde {\mathcal K}_{\boldsymbol \alpha}$ is $K$-invariant. Hence we may apply Proposition \ref{Kvan}.
\end{proof}

The rest of this section is devoted to the (rather long) proof of the following theorem.

\begin{theorem} \label{Z} Let $\boldsymbol \alpha = (\alpha_1,\alpha_2,\alpha_3)\in \mathbb C^3$. The trilinear form $\widetilde {\mathcal K}_{\boldsymbol \alpha}$ vanishes identically if and only if (at least) one of the following properties (up to a permutation of the indices) is satisfied

\begin{equation}\label{ZIalpha}
\alpha_1 =-(n-1)-2 k_1,\qquad \alpha_2 = -(n-1) -2k_2
\end{equation}
 for some $k_1,k_2\in \mathbb N$, or
\begin{equation}\label{ZIIalpha}
\alpha_1+\alpha_2+\alpha_3 = -2(n-1) -2k, \quad \alpha_3 = 2l_3
\end{equation}  
for some $k, l_3\in \mathbb N$.
\end{theorem}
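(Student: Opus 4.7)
The plan is to use the preceding proposition to reduce $\widetilde{\mathcal K}_{\boldsymbol\alpha}\equiv 0$ to the discrete condition $I_{\boldsymbol\alpha}(a_1,a_2,a_3)=0$ for every $(a_1,a_2,a_3)\in\mathbb N^3$, and then analyse this via the explicit formula \eqref{Kpalpha}. Throughout I abbreviate $\beta_0:=\tfrac{\alpha_1+\alpha_2+\alpha_3}{2}+2\rho$, $\beta_j:=\tfrac{\alpha_j}{2}+\rho$ for $j=1,2,3$, and $\gamma_{ij}:=\beta_i+\beta_j$; note $\beta_0=\beta_1+\beta_2+\beta_3-\rho$ and $\gamma_{ij}=\beta_0+\rho-\beta_\ell$ whenever $\{i,j,\ell\}=\{1,2,3\}$. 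A given $I_{\boldsymbol\alpha}(\boldsymbol a)$ vanishes iff either some Pochhammer in the numerator of \eqref{Kpalpha} is zero (requiring one of $\beta_0,\beta_1,\beta_2,\beta_3\in-\mathbb N$ plus a size condition on $|\boldsymbol a|$ or $a_j$) or some $\Gamma$ in the denominator has a pole (requiring $\gamma_{ij}\in\mathbb Z_{\leq 0}$ and $a_i+a_j\leq -\gamma_{ij}$). In these terms, condition \eqref{ZIalpha} reads $\beta_1,\beta_2\in-\mathbb N$ (so $\gamma_{12}\in-\mathbb N$), while condition \eqref{ZIIalpha} reads $\beta_0\in-\mathbb N$ and $\beta_3\in\rho+\mathbb N$ (so $\gamma_{12}=\beta_0+\rho-\beta_3\leq -k$).

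For sufficiency, under \eqref{ZIalpha} the three zero-regions $\{a_1>k_1\}$, $\{a_2>k_2\}$, $\{a_1+a_2\leq k_1+k_2\}$ (the last coming from $\gamma_{12}=-(k_1+k_2)$) together cover $\mathbb N^3$. Under \eqref{ZIIalpha} the two regions $\{|\boldsymbol a|>k\}$ and $\{a_1+a_2\leq k+l_3\}$ (from $\gamma_{12}=-(k+l_3)$) cover $\mathbb N^3$, since $|\boldsymbol a|\leq k$ implies $a_1+a_2\leq k\leq k+l_3$. In both cases $I_{\boldsymbol\alpha}\equiv 0$.

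For necessity I proceed by asymptotic test points. Evaluating $I_{\boldsymbol\alpha}(a,a,a)$ for all sufficiently large $a$: every $\Gamma$-argument in the denominator is a large positive number, so $I=0$ forces a Pochhammer to vanish, and this must hold for arbitrarily large $a$, whence $\beta_0\in-\mathbb N$ or some $\beta_j\in-\mathbb N$. Assume first $\beta_0\notin-\mathbb N$; testing $I_{\boldsymbol\alpha}(0,a_2,a_3)$ for $a_2,a_3$ large again produces no denominator pole, and $(\beta_0)_{a_2+a_3}$ and $(\beta_1)_0=1$ are nonzero, so some $(\beta_2)_{a_2}$ or $(\beta_3)_{a_3}$ vanishes; hence $\beta_2\in-\mathbb N$ or $\beta_3\in-\mathbb N$. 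The analogous arguments on the slices $(a_1,0,a_3)$ and $(a_1,a_2,0)$ force \emph{two} of $\beta_1,\beta_2,\beta_3$ to lie in $-\mathbb N$, which is exactly \eqref{ZIalpha}.

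The remaining case is $\beta_0=-k\in-\mathbb N$. Then $(\beta_0)_{|\boldsymbol a|}$ covers $|\boldsymbol a|>k$, reducing the problem to covering the tetrahedron $T_k:=\{\boldsymbol a\in\mathbb N^3:|\boldsymbol a|\leq k\}$ by the remaining resonance regions $R_j:=\{a_j>-\beta_j\}$ (when $\beta_j\in-\mathbb N$) and $S_{ij}:=\{a_i+a_j\leq -\gamma_{ij}\}$ (when $\gamma_{ij}\in\mathbb Z_{\leq 0}$). The key numerical relation $\gamma_{12}+\gamma_{13}+\gamma_{23}=2\rho-2k$ bounds the total ``size'' of the $S_{ij}$. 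The strategy is to assume that neither \eqref{ZIalpha} nor \eqref{ZIIalpha} holds --- i.e., at most one $\beta_j\in-\mathbb N$, and every integer $\gamma_{ij}$ satisfies $\gamma_{ij}>-k$ --- and to exhibit an explicit point of $T_k$ at which all seven resonance conditions fail. The vertex analysis on $(k,0,0),(0,k,0),(0,0,k)$ (combined with possible $R_j$-coverage when some $\beta_j=-\ell$) forces a short list of sign and parity constraints on the $\gamma_{ij}$; then, for instance in the sub-case where $\beta_3=-\ell$ is the only negative-integer $\beta$ and $k>\ell$, the edge point $\boldsymbol a=(k-\ell,0,\ell)$ has $\Gamma$-arguments $\rho,\beta_2,\beta_1-\ell+k$ (using $\gamma_{12}=\rho+\ell-k$, $\gamma_{23}=\beta_2-\ell$, $\gamma_{13}=\beta_1-\ell$), all strictly positive in view of $\beta_1,\beta_2\notin-\mathbb N$ and $k>\ell$, and all four Pochhammers are nonzero, producing the required contradiction. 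The main obstacle is the finite but intricate combinatorial bookkeeping across all sub-cases, including the parity alternative for $n$: when $n$ is even, $\rho=(n-1)/2$ is a half-integer, so certain $\gamma_{ij}$ are automatically non-integer and the corresponding $S_{ij}$ is empty, which must be treated separately by a different witness point; in every sub-case the relation $\gamma_{ij}=\beta_0+\rho-\beta_\ell\leq-k$ that does survive translates directly into $\beta_\ell\in\rho+\mathbb N$, i.e.\ \eqref{ZIIalpha}.
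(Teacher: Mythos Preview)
Your approach is the same as the paper's: reduce to the vanishing of all $I_{\boldsymbol\alpha}(a_1,a_2,a_3)$ and analyse the explicit formula \eqref{Kpalpha}. The sufficiency argument is correct and essentially identical to the paper's Propositions~4.2 and~4.3. The necessity in the case $\beta_0\notin-\mathbb N$ is also correct: your slice tests at $(0,a_2,a_3)$, $(a_1,0,a_3)$, $(a_1,a_2,0)$ for large $a_i$ force two of the $\beta_j$ into $-\mathbb N$, which is \eqref{ZIalpha}. This matches the paper's Step~1 reduction.

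The genuine gap is in the case $\beta_0=-k$. You correctly reformulate the negation of \eqref{ZIalpha}--\eqref{ZIIalpha} as ``at most one $\beta_j\in-\mathbb N$ and every integer $\gamma_{ij}$ satisfies $\gamma_{ij}>-k$'', and you announce the strategy of producing a witness $\boldsymbol a\in T_k$ with $I_{\boldsymbol\alpha}(\boldsymbol a)\neq 0$. But you carry out only one illustrative sub-case ($\beta_3=-\ell$ the unique negative-integer $\beta$, with $k>\ell$), and then explicitly defer the remainder as ``finite but intricate combinatorial bookkeeping''. That bookkeeping is precisely the substance of the proof: the paper devotes its entire Section~4.3 (Steps~2 and~3, Cases~(A) and~(B), each split according to the parity of $n-1$) to constructing the witness points, and the constructions are not uniform --- different algebraic relations among $k,l,m$ and the parity of $n$ force genuinely different choices of $(a_1,a_2,a_3)$. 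Your vertex/edge heuristic does not by itself produce these choices; for instance, in the paper's Case~(B) with $2k-m-l\geq n-1$ the witness is the interior point $(k-l-p+1,\,k-m-q,\,m+l-k+n-2)$ built from an auxiliary splitting $p+q=n-1$, which is not visible from vertex analysis alone. So as it stands the proposal is an outline, not a proof, and the missing part is exactly the hard one.

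One minor correction in the sub-case you do treat: the $\Gamma$-arguments $\beta_2$ and $\beta_1+k-\ell$ need not be ``strictly positive'' (the $\beta_j$ are complex), only \emph{not non-positive integers}. That is what actually follows from $\beta_1,\beta_2\notin-\mathbb N$ and $k>\ell$, and it suffices for the $\Gamma$-values to be finite.
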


This theorem can be translated  in terms of the spectral parameter $\boldsymbol \lambda$.

\begin{theorem}\label{ZK} Let $\boldsymbol \lambda = (\lambda_1,\lambda_2,\lambda_3)$. Then $\widetilde {\mathcal K}^{\boldsymbol \lambda}$ vanishes identically if and only if (at least) one of the following properties (up to permutation of the indices) is satisfied
\begin{equation}\label{ZIlambda}
\lambda_3 = -\rho-l, \quad \lambda_1-\lambda_2 = m,  \quad l\in \mathbb N, m\in \mathbb Z,\quad  \vert m\vert \leq l,\quad l\equiv m\  (2)
\end{equation}
\begin{equation}\label{ZIIlambda}
\lambda_3 = -\rho-l,\quad \lambda_1+\lambda_2 = m,\quad l\in \mathbb N,  m\in \mathbb Z,\quad \vert m\vert\leq l,\quad l\equiv m\ (2)
\end{equation}

\end{theorem}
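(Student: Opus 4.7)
The plan is to deduce Theorem \ref{ZK} directly from Theorem \ref{Z} by the linear change of variables \eqref{alphalambda}--\eqref{lambdaalpha} between the geometric and spectral parameters. A transposition of two $\alpha_j$'s induces the same transposition of the corresponding $\lambda_j$'s (this is immediate from \eqref{lambdaalpha}, where the indexing of the three sums matches), so the $S_3$-symmetry on both sides matches and it suffices to handle one representative of each case. I will choose the permutation singling out the index $3$.

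For condition \eqref{ZIalpha}, I would substitute $\alpha_1 = -2\rho - 2k_1$, $\alpha_2 = -2\rho - 2k_2$ (with $\alpha_3\in\mathbb C$ arbitrary) into \eqref{lambdaalpha}. This gives
\[
\lambda_3 = -\rho - (k_1+k_2), \qquad \lambda_1 - \lambda_2 = k_1 - k_2,
\]
while $\lambda_1+\lambda_2 = 2\rho + \lambda_3 - \rho + \alpha_3 = \rho + \lambda_3 + \alpha_3$ sweeps out all of $\mathbb C$ as $\alpha_3$ varies. Setting $l := k_1+k_2$ and $m := k_1-k_2$, the condition $(k_1,k_2)\in\mathbb N^2$ is equivalent to $l\in\mathbb N$, $m\in\mathbb Z$, $|m|\leq l$, $l\equiv m\pmod 2$, the inverse being $k_1=(l+m)/2$, $k_2=(l-m)/2$. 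This yields \eqref{ZIlambda}.

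For condition \eqref{ZIIalpha}, I would substitute $\alpha_3 = 2l_3$ and $\alpha_1+\alpha_2 = -4\rho - 2k - 2l_3$ (with $\alpha_1-\alpha_2\in\mathbb C$ arbitrary) into \eqref{lambdaalpha}. A direct computation gives
\[
\lambda_3 = -\rho - (k+l_3), \qquad \lambda_1 + \lambda_2 = l_3 - k,
\]
while $\lambda_1-\lambda_2 = (\alpha_2-\alpha_1)/2$ remains free. Setting $l := k+l_3$ and $m := l_3-k$, the same parity/inequality dictionary converts $(k,l_3)\in\mathbb N^2$ into the conditions of \eqref{ZIIlambda}.

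No significant obstacle arises: the argument is pure algebraic bookkeeping on top of Theorem \ref{Z}. The only mildly delicate point is verifying the bijection between the lattice conditions $k_i\in\mathbb N$ and the pair $(l,m)$ with $|m|\leq l$ and $l\equiv m\pmod 2$, and checking that the ``up to permutation'' clauses translate correctly---both of which are routine.
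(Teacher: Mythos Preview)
Your proposal is correct and follows essentially the same approach as the paper: the paper also deduces Theorem \ref{ZK} from Theorem \ref{Z} by substituting into \eqref{lambdaalpha}, setting $l=k_1+k_2$, $m=k_1-k_2$ (resp.\ $l=k+l_3$, $m=l_3-k$), and checking the bijection with the constraints $l\in\mathbb N$, $m\in\mathbb Z$, $|m|\le l$, $l\equiv m\ (2)$. Your explicit remark that the $S_3$-action on $\boldsymbol\alpha$ and $\boldsymbol\lambda$ coincide is a small clarification the paper leaves implicit, but otherwise the arguments are the same.
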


\noindent
\emph{Proof} of the equivalence  of the two formulations.

$\bullet$  First case. Let $\alpha_1=-(n-1)-2k_1$ and $\alpha_2 = -(n-1)-2k_2$ where $k_1,k_2\in \mathbb N$. Then, by \eqref{lambdaalpha}
\[ \lambda_1 = -k_2 +\frac{\alpha_3}{2},\quad\lambda_2 = -k_1+\frac{\alpha_3}{2},\quad \lambda_3 = -\rho-k_1-k_2\ .
\]
Set $l=k_1+k_2$, and $m=k_1-k_2$, so that $\lambda_3 = -\rho-l, \lambda_1-\lambda_2 = m$. Then $l\in \mathbb N$, $m\in \mathbb Z, l\equiv m \ (2)$, and $\vert m\vert \leq l$.

Conversely, if $\boldsymbol \lambda$ satisfies \eqref{ZIlambda}, set $k_1 = \frac{l-m}{2}, k_2 = \frac{l+m}{2}$. Then $k_1,k_2\in \mathbb N$ and $\alpha_1 = -(n-1)-2 k_1, \alpha_2 = -(n-1) -2k_2$.
\medskip

$\bullet$ Second case. Let $\alpha_1+\alpha_2+\alpha_3 = -2(n-1)-2k$ and $\alpha_3=2l_3$, for some $k,l_3\in \mathbb N$. Then, by \eqref{lambdaalpha}
\[\lambda_1 = \rho+l_3+\frac{\alpha_2}{2},\quad \lambda_2 = -\rho-k-\frac{\alpha_2}{2}, \quad \lambda_3 = -\rho-k-l_3\ .
\]
Set $l=k+l_3, m=l_3-k$. Then $l\in \mathbb N, m\in \mathbb Z, m\equiv l\ (2)$ and $ \vert m\vert\leq l$. Moreover,
$\lambda_1+\lambda_2 = l_3-k=m, \lambda_3 = -\rho-l$. 

Conversely, if $\boldsymbol \lambda$ satisfies \eqref{ZIIlambda}, set $k= \frac{l-m}{2}, l_3=\frac{l+m}{2}$. Then $k, l_3\in \mathbb N $ and the statement follows.

\subsection{The first vanishing situation}

\begin{proposition}\label{van1}
 Let $\boldsymbol \alpha$ be such that \[\alpha_1 = -(n-1)-2k_1, \quad \alpha_2=-(n-1)-2k_2, \qquad  k_1,k_2\in \mathbb N\ .\]  Then
$\widetilde {\mathcal K}_{\boldsymbol \alpha} \equiv 0$.

\end{proposition}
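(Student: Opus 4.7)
The plan is to combine Proposition~\ref{Kvan} (applied via the preceding proposition of this section) with the explicit evaluation formula \eqref{Kpalpha}. Since $\widetilde{\mathcal K}_{\boldsymbol\alpha}$ is a $K$-invariant distribution, it suffices to check that $I_{\boldsymbol\alpha}(a_1,a_2,a_3)=0$ for every $(a_1,a_2,a_3)\in \mathbb N^3$. With this reduction the statement becomes a bookkeeping exercise on Pochhammer symbols and reciprocals of $\Gamma$.

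Substitute $\alpha_1=-2\rho-2k_1$ and $\alpha_2=-2\rho-2k_2$ into \eqref{Kpalpha}. The three factors that matter are
\[
\left(\tfrac{\alpha_1}{2}+\rho\right)_{a_1}=(-k_1)_{a_1}, \qquad
\left(\tfrac{\alpha_2}{2}+\rho\right)_{a_2}=(-k_2)_{a_2}, \qquad
\Gamma\!\left(\tfrac{\alpha_1+\alpha_2}{2}+2\rho+a_1+a_2\right)=\Gamma(-k_1-k_2+a_1+a_2).
\]
The remaining Pochhammer symbol in $a_3$, the global Pochhammer $(\tfrac{\alpha_1+\alpha_2+\alpha_3}{2}+2\rho)_{a_1+a_2+a_3}$, and the other two denominator $\Gamma$'s are manifestly finite (recall that $1/\Gamma$ is entire), so the vanishing of the full expression is controlled by the ratio
\[
R(a_1,a_2)\;:=\;\frac{(-k_1)_{a_1}\,(-k_2)_{a_2}}{\Gamma(-k_1-k_2+a_1+a_2)}.
\]

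I split into cases on $(a_1,a_2)$. \emph{Case 1.} If $a_1>k_1$, then $(-k_1)_{a_1}=0$ by the standard property of Pochhammer symbols recalled in the paper, and the denominator factor $1/\Gamma$ is in any event a finite number, so $R(a_1,a_2)=0$. \emph{Case 2.} If $a_2>k_2$, the same argument applies with the roles of $1$ and $2$ exchanged. \emph{Case 3.} If $a_1\leq k_1$ and $a_2\leq k_2$, then the two Pochhammer symbols in the numerator are finite (possibly nonzero), but $-k_1-k_2+a_1+a_2\leq 0$ is a non-positive integer, so $\Gamma$ has a pole there and $1/\Gamma(-k_1-k_2+a_1+a_2)=0$. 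In every case $R(a_1,a_2)=0$, hence $I_{\boldsymbol\alpha}(a_1,a_2,a_3)=0$ for all $(a_1,a_2,a_3)$.

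There is no real obstacle here; the argument is essentially the observation that the zero of the Pochhammer numerator and the zero of the reciprocal $\Gamma$ denominator together cover the full range of $(a_1,a_2)$ and are independent of $a_3$. The only point worth noting carefully is that one must interpret $1/\Gamma$ as an entire function (so that the "$0\cdot$ finite" bookkeeping is legitimate), and that the other Gamma factors in \eqref{Kpalpha}, being of the form $1/\Gamma$, cannot introduce any singular contribution either. Once the vanishing of $I_{\boldsymbol\alpha}(a_1,a_2,a_3)$ is established for all triples, Proposition~\ref{Kvan} applied to the $K$-invariant distribution $\widetilde{\mathcal K}_{\boldsymbol\alpha}$ gives $\widetilde{\mathcal K}_{\boldsymbol\alpha}\equiv 0$.
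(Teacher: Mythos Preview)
Your proof is correct and follows essentially the same approach as the paper's: both isolate the factor $\dfrac{(-k_1)_{a_1}(-k_2)_{a_2}}{\Gamma(-k_1-k_2+a_1+a_2)}$ in \eqref{Kpalpha} and observe that either a Pochhammer numerator or the reciprocal $\Gamma$ vanishes for every $(a_1,a_2)$. The only cosmetic difference is the case split---the paper splits on whether $a_1+a_2\le k_1+k_2$, you split on whether $a_1>k_1$ or $a_2>k_2$---but the logic is identical.
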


\begin{proof} The assumptions on $\boldsymbol \alpha$ imply that
\[\frac{\alpha_1}{2} +\rho = -k_1,\quad \frac{\alpha_2}{2} +\rho = -k_2,\quad
\frac{\alpha_1+\alpha_2}{2} +2\rho = -k_1-k_2\] 
so that, for $a_1,a_2,a_3\in \mathbb N$,  \eqref{Kpalpha}  can be rewritten as
\[I_{\boldsymbol \alpha}(a_1,a_2,a_3) =
\frac{(-k_1)_{a_1} (-k_2)_{a_2}}{\Gamma(-k_1-k_2+a_1+a_2)}\quad \times \dots
\]
If $a_1+a_2\leq k_1+k_2$, the denominator is singular, so $I_{\boldsymbol \alpha}(a_1,a_2,a_3) =0$. If $a_1+a_2 > k_1+k_2$, then  either $a_1>k_1$ or $a_2>k_2$. If for instance $a_1>k_1$, then $(-k_1)_{a_1} = 0$, so that $I_{\boldsymbol \alpha}(a_1,a_2,a_3) =0$. \end{proof}

\subsection {The second vanishing situation}
\begin{proposition}\label{van2}
 Let $\boldsymbol \alpha$ be such that \[\alpha_1+\alpha_2+\alpha_3 = -2(n-1)-2k, \quad \alpha_3 = 2l_3,\quad  k,l_3\in \mathbb N\ .\] Then $\widetilde {\mathcal K}_{\boldsymbol \alpha} \equiv 0$.
\end{proposition}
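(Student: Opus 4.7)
The plan is to mimic the strategy of Proposition~\ref{van1}: by the proposition opening this section, it suffices to show that $I_{\boldsymbol\alpha}(a_1,a_2,a_3) = 0$ for every $(a_1,a_2,a_3) \in \mathbb N^3$, and this can be read off directly from the explicit formula \eqref{Kpalpha} by exhibiting, for each triple, either a numerator factor that vanishes or a denominator $\Gamma$ that has a pole.

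Under the present hypotheses, two of the arguments appearing in \eqref{Kpalpha} collapse nicely:
\[
\tfrac{\alpha_1+\alpha_2+\alpha_3}{2}+2\rho \;=\; -k, \qquad \tfrac{\alpha_1+\alpha_2}{2}+2\rho \;=\; -k-l_3 .
\]
The first turns the Pochhammer $\bigl(\tfrac{\alpha_1+\alpha_2+\alpha_3}{2}+2\rho\bigr)_{a_1+a_2+a_3}$ in the numerator of \eqref{Kpalpha} into $(-k)_{a_1+a_2+a_3}$, while the second turns the denominator Gamma $\Gamma\bigl(\tfrac{\alpha_1+\alpha_2}{2}+2\rho + a_1+a_2\bigr)$ into $\Gamma(-k-l_3+a_1+a_2)$. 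A two-case split on $a_1+a_2+a_3$ then finishes the job: if $a_1+a_2+a_3 > k$, the Pochhammer $(-k)_{a_1+a_2+a_3}$ vanishes; if instead $a_1+a_2+a_3 \le k$, then using $a_3 \ge 0$ and $l_3 \ge 0$ one has $a_1+a_2 \le k \le k+l_3$, so $-k-l_3+a_1+a_2$ is a non-positive integer and the reciprocal of $\Gamma(-k-l_3+a_1+a_2)$ vanishes.

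I do not foresee any real obstacle: the argument is a direct inspection of \eqref{Kpalpha}, strictly parallel to Proposition~\ref{van1}. The only minor point worth verifying is that in the second case no other factor of \eqref{Kpalpha} simultaneously explodes and could a priori cancel the zero; concretely, the Pochhammer numerators $(\tfrac{\alpha_1}{2}+\rho)_{a_1}$, $(\tfrac{\alpha_2}{2}+\rho)_{a_2}$, $(\rho+l_3)_{a_3}$ are polynomial in $\alpha$ and finite, and the two remaining denominator Gamma factors sit at generic arguments, so they cannot disturb the vanishing contributed by $1/\Gamma(-k-l_3+a_1+a_2)$. The conclusion $\widetilde{\mathcal K}_{\boldsymbol\alpha}\equiv 0$ then follows from the first proposition of this section.
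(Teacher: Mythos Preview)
Your proof is correct and follows essentially the same approach as the paper: both identify the two key reductions $\tfrac{\alpha_1+\alpha_2+\alpha_3}{2}+2\rho=-k$ and $\tfrac{\alpha_1+\alpha_2}{2}+2\rho=-k-l_3$, then split into two cases to see that either $(-k)_{a_1+a_2+a_3}=0$ or $1/\Gamma(-k-l_3+a_1+a_2)=0$. The only cosmetic difference is that the paper splits on whether $a_1+a_2\le k+l_3$, whereas you split on whether $a_1+a_2+a_3\le k$; both dichotomies cover $\mathbb N^3$ and lead to the same conclusion. Your closing remark about other factors ``exploding'' is unnecessary: every factor in \eqref{Kpalpha} is a Pochhammer symbol or a reciprocal Gamma, hence automatically finite, so a single vanishing factor forces $I_{\boldsymbol\alpha}(a_1,a_2,a_3)=0$ without any risk of cancellation.
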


\begin{proof} The assumptions on $\boldsymbol \alpha$ imply that

\[\frac{\alpha_1+\alpha_2+\alpha_3}{2} +(n-1) = -k,\qquad \frac{\alpha_1+\alpha_2}{2} +(n-1) = -k-l_3\]
so that for $a_1,a_2,a_3\in \mathbb N$, \eqref{Kpalpha} can be rewritten as 
\[I_{\boldsymbol \alpha}(a_1,a_2,a_3) = \frac{ (-k)_{a_1+a_2+a_3}}{\Gamma(-k-l_3+a_1+a_2)}\quad \times \dots
\]
If $a_1+a_2\leq k+l_3$,  $(-k-l_3+a_1+a_2)$ is a pole of $\Gamma$, so that $I_{\boldsymbol \alpha}(a_1,a_2,a_3) =0$. If $a_1+a_2>k+l_3$,  then $a_1+a_2+a_3> k+l_3\geq k$, so that $(-k)_{a_1+a_2+a_3}=0$.  Hence, in any case, $I_{\boldsymbol \alpha}(a_1,a_2,a_3) =0$.
\end{proof}

\subsection{ Necessary conditions for the vanishing}

We now prove that the conditions of Theorem \ref{Z} are \emph{necessary} to have $\widetilde {\mathcal K}_{\boldsymbol \alpha} \equiv0$. 

{\bf Step 1}. Reduction of the problem

\begin{proposition} Let $\boldsymbol \alpha$ be such that $\widetilde {\mathcal K}_{\boldsymbol \alpha} \equiv 0$. Then either $\boldsymbol \alpha$ belongs to $Z$ or, up to a permutation of the indices,
\[ \alpha_1+\alpha_2+\alpha_3 =-2(n-1)-2k, \quad \alpha_1+\alpha_2 = -2(n-1)-2l\]
for some $k,l\in \mathbb N, k>l$. 
\end{proposition}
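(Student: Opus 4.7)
\textit{Proof plan.} The plan is to use the characterization from the first proposition of this section ($\widetilde{\mathcal K}_{\boldsymbol \alpha}\equiv 0 \iff I_{\boldsymbol \alpha}(a_1,a_2,a_3)=0$ for every $(a_1,a_2,a_3)\in \mathbb N^3$) together with the explicit formula \eqref{Kpalpha}. Introducing the shorthand $\beta_j=\frac{\alpha_j}{2}+\rho$, $\gamma=\frac{\alpha_1+\alpha_2+\alpha_3}{2}+2\rho$ and $\delta_{ij}=\frac{\alpha_i+\alpha_j}{2}+2\rho$, that formula writes $I_{\boldsymbol \alpha}(a_1,a_2,a_3)$, up to a nowhere-vanishing constant, as the product of the four Pochhammer symbols $(\gamma)_{a_1+a_2+a_3}$ and $(\beta_j)_{a_j}$ ($j=1,2,3$) with the three reciprocal Gammas $1/\Gamma(\delta_{ij}+a_i+a_j)$. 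The identical vanishing of a product of such entire factors can then be read off from their explicit individual zero loci.

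I would start by specializing at $(0,0,0)$: all four Pochhammer symbols are $1$, so $I_{\boldsymbol \alpha}(0,0,0)=0$ forces at least one $\delta_{ij}$ to lie in $-\mathbb N$. After permuting the indices, assume $\delta_{12}=-l$, i.e.\ $\alpha_1+\alpha_2=-2(n-1)-2l$. If two among $\beta_1,\beta_2,\beta_3$ belong to $-\mathbb N$, two $\alpha_j$'s are of type I and $\boldsymbol \alpha\in Z$ via \eqref{ZIalpha}; so I may assume that the set $\Pi=\{j:\beta_j\in -\mathbb N\}$ has at most one element. In particular there is an index $j^*\notin \Pi$, and two distinct indices $i,k$ with $\{i,j^*,k\}=\{1,2,3\}$.

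The core step, which I expect to be the main obstacle, is to deduce $\gamma\in -\mathbb N$ from these data. The idea is to probe with triples in which only $(\gamma)_{a_1+a_2+a_3}$ is susceptible of vanishing: fix $a_i,a_k\in \mathbb N$ bounded, arranged so that (a) $(\beta_i)_{a_i}(\beta_k)_{a_k}\neq 0$ (take $a_j=-\beta_j$ for the at most one $j\in\Pi\cap\{i,k\}$ and anything for the other), and (b) $\Gamma(\delta_{ik}+a_i+a_k)$ is nonsingular, which is automatic when $\delta_{ik}\notin -\mathbb N$ and can otherwise be enforced by inflating the unconstrained coordinate among $a_i,a_k$ (when $j^*=3$ we also need $a_1+a_2>l$ to keep $\Gamma(\delta_{12}+a_1+a_2)$ nonsingular, which is arrangeable in the same way). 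Then let $a_{j^*}=A$ range over $\mathbb N$: for large $A$ the two other Gamma arguments also leave $-\mathbb N$, and $(\beta_{j^*})_A\neq 0$ since $\beta_{j^*}\notin -\mathbb N$, so the vanishing of $I_{\boldsymbol \alpha}(a_1,a_2,a_3)$ forces $(\gamma)_{A+a_i+a_k}=0$ for an unbounded sequence of $A$. Since $(\gamma)_n\neq 0$ for every $n\in \mathbb N$ as soon as $\gamma\notin -\mathbb N$, this compels $\gamma\in -\mathbb N$; write $\gamma=-k$, $k\in \mathbb N$.

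Finally, subtracting the two relations already established gives $\alpha_3=2(l-k)$. If $l\geq k$, then $\alpha_3=2l_3$ with $l_3=l-k\in \mathbb N$, and combined with $\gamma=-k$ this is the alternative \eqref{ZIIalpha}, placing $\boldsymbol \alpha\in Z$. Otherwise $k>l$, which is the residual alternative claimed, after the permutation performed at the beginning. The delicate point is the simultaneous Pochhammer/Gamma bookkeeping in the middle paragraph: the hypothesis $|\Pi|\leq 1$ provides just enough slack---two unconstrained $\beta_j$'s---to dodge all three Gamma singularities while keeping every $(\beta_j)_{a_j}$ nonzero along the chosen sequence of triples.
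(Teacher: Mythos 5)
Your argument is correct and follows essentially the same route as the paper: evaluate $I_{\boldsymbol \alpha}$ at $(0,0,0)$ to force some $\delta_{ij}\in -\mathbb N$, probe with triples having one coordinate tending to infinity to force $\gamma\in -\mathbb N$ unless two of the $\beta_j$ already lie in $-\mathbb N$ (which places $\boldsymbol \alpha$ in $Z$ via \eqref{ZIalpha}), and conclude with the arithmetic $\alpha_3=2(l-k)$ and \eqref{ZIIalpha}. The only difference is organizational: the paper first shows $\boldsymbol \alpha$ must be a pole (via the nonvanishing of $\mathcal K_{\boldsymbol \alpha}$ on $\mathcal O_0$) and then treats poles of type I and of type II separately, whereas your single probing step, with the inflated coordinate $a_{j^*}$ chosen so that $\beta_{j^*}\notin -\mathbb N$ and the other two coordinates tuned to dodge the $\Gamma$-singularities, handles both cases at once.
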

\begin{proof} If $\boldsymbol \alpha$ is not a pole, then $\widetilde {\mathcal K}_{\boldsymbol \alpha}$ is a (non zero) multiple of ${\mathcal K}_{\boldsymbol \alpha}$, and the restriction of ${\mathcal K}_{\boldsymbol \alpha}$ to the open orbit $\mathcal O_0$ is non identically $0$. Hence $\widetilde {\mathcal K}_{\boldsymbol \alpha} \equiv 0$ implies that $\boldsymbol \alpha$ is a pole.

Next,  $\widetilde {\mathcal K}_{\boldsymbol \alpha} \equiv 0$ implies $I_{\boldsymbol \alpha}(0,0,0)=0$. Hence some $\Gamma$ factor in \eqref{BRalpha} has a pole, or explicitly
  \begin{equation}\label{Kvan2}
  \alpha_i+\alpha_j +2(n-1)\in -2\mathbb N
  \end{equation} 
  for some $1\leq i\neq j\leq3$.
\smallskip

Hence for $\boldsymbol \alpha$ a pole of type II, up to a permutation of the indices, there exist $k,l\in \mathbb N$, such that
\[ \alpha_1+\alpha_2+\alpha_3 =-2(n-1)-2k, \quad \alpha_1+\alpha_2 = -2(n-1)-2l\ .\]

Let $\boldsymbol \alpha$ be a pole of type $I$, say of type $I_1$, i.e. $\alpha_1 = -(n-1)-2k_1$, for some $k_1\in \mathbb N$. For $a_2$ and $a_3$ sufficiently large, \eqref{Kpalpha} implies
\[I_{\boldsymbol \alpha}(0,a_2,a_3) = \big(\frac{\alpha_1+\alpha_2+\alpha_3}{2}+2\rho\big)_{a_1+a_2+a_3}
\big(\frac{\alpha_2}{2} +\rho\big)_{a_2}\big(\frac{\alpha_3}{2} +\rho\big)_{a_3}\times NVT
\]
(NVT = non vanishing term).
 Hence $\widetilde {\mathcal K}_{\boldsymbol \alpha} \equiv 0$ implies
 \smallskip
 
 $\bullet$ $\frac{\alpha_2}{2} +\rho\in -\mathbb N$ (or $\frac{\alpha_3}{2} +\rho\in -\mathbb N$) 
 
 or
 \smallskip
  
 $\bullet$ $\frac{\alpha_1+\alpha_2+\alpha_3}{2}+2\rho \in -\mathbb N$.
\smallskip

In the first case, $\boldsymbol \alpha$ is also a pole of type $I_2$ (or $I_3$),  hence $\boldsymbol \alpha$ belongs to $Z$ by \eqref{ZIalpha}. In the second case, we are back to the case of a pole of type II.

Summing up, and up to a permutation of the indices, it remains to consider the situation where 
\[\alpha_1+\alpha_2+\alpha_3 = -2(n-1)-2k, \quad \alpha_1+\alpha_2 = -2(n-1)-2l\ .
\]
for some $k,l\in \mathbb N$. The two conditions imply $\alpha_3 = -2k+2l$. If $k\leq l$, then $\boldsymbol \alpha$ is in $Z$. So, only the case where $k>l$ remains open, and this is the content of the proposition.
\end{proof}

\begin{proposition} Let $\boldsymbol \alpha$ satisfy \[ \alpha_1+\alpha_2+\alpha_3 =-2(n-1)-2k, \quad \alpha_1+\alpha_2 =-2(n-1)-2l\]
for some $k,l\in \mathbb N, k>l $.
Then either $\boldsymbol \alpha$ belongs to $Z$ or there exists $a_1,a_2,a_3$ such that $\widetilde {\mathcal K}_{\boldsymbol \alpha}(p_{a_1,a_2,a_3})\neq 0$.
\end{proposition}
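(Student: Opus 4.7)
The plan is to substitute the two hypotheses into the explicit formula \eqref{Kpalpha} and exhibit, for every $\boldsymbol\alpha \notin Z$, a triple $(a_1,a_2,a_3)\in\mathbb N^3$ with $I_{\boldsymbol\alpha}(a_1,a_2,a_3)\neq 0$. Setting $m:=k-l\ge 1$, the hypotheses give $\frac{\alpha_1+\alpha_2+\alpha_3}{2}+2\rho=-k$, $\frac{\alpha_1+\alpha_2}{2}+2\rho=-l$ and $\frac{\alpha_3}{2}+\rho=\rho-m$, so from the outset \eqref{Kpalpha} forces $a_1+a_2+a_3\le k$ (for $(-k)_{a_1+a_2+a_3}\neq 0$) and $a_1+a_2\ge l+1$ (for $\Gamma(-l+a_1+a_2)$ to be finite). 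A preliminary reduction disposes of the case $(\alpha_j/2+\rho)_{l+1}=0$ for some $j\in\{1,2\}$: if e.g.\ $\alpha_2=-(n-1)-2k_2$ with $k_2\le l$, then $\alpha_1+\alpha_2=-2(n-1)-2l$ automatically forces $\alpha_1=-(n-1)-2(l-k_2)$, placing $\boldsymbol\alpha$ in $Z$ via \eqref{ZIalpha}. I may therefore assume $(\alpha_1/2+\rho)_{a_1}$ and $(\alpha_2/2+\rho)_{a_2}$ are nonzero whenever $a_j\le l+1$.

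The main candidate is then $(a_1,l+1-a_1,0)$: the numerator Pochhammers are nonzero, $\Gamma(-l+a_1+a_2)=\Gamma(1)=1$, and only the two remaining denominator $\Gamma$'s, with arguments $\frac{\alpha_1}{2}+(n-1)-m+a_1$ and $\frac{\alpha_2}{2}+(n-1)-m+(l+1-a_1)$, could still make $I$ vanish. These arguments are integers only when $\alpha_1\in 2\mathbb Z$; if $\alpha_1\notin 2\mathbb Z$ any $a_1$ works. If $\alpha_1=2p$, the exclusion via \eqref{ZIIalpha} rules out $p\ge 0$ and $p\le -(n-1)-l$, leaving $p\in\{-(n-2)-l,\dots,-1\}$, and the ``good'' $a_1$'s (those making neither $\Gamma$-argument a non-positive integer) form the integer interval $[\max(0,m-n+2-p),\min(l+1,-m-p)]$, which is non-empty and meets $\{0,\dots,l+1\}$ precisely when $2m\le n-2$.

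For the complementary range $2m\ge n-1$ I enlarge $a_3\ge 1$, keeping $a_3\le m-1$ so that $(-k)_{a_1+a_2+a_3}\neq 0$ when $a_1+a_2=l+1$, and, when $n$ is odd with $m\ge\rho$, also $a_3\le m-\rho$ so that $(\rho-m)_{a_3}\neq 0$. The good interval for $a_1$ then shifts to $[\max(0,m-n+2-p-a_3),\min(l+1,-m-p+a_3)]$ and its length $n-2m+2a_3-1$ grows linearly with $a_3$; summing the three ``no pole'' inequalities yields $a_1+a_2+a_3\ge k-\rho+\tfrac{3}{2}$, which is $\le k$ because $n\ge 4$ gives $\rho\ge\tfrac{3}{2}$, so admissible triples always exist. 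The main obstacle is the subcase $n$ odd with $m\ge\rho$: there $\alpha_3=-(n-1)-2(m-\rho)$ is itself of type $I_3$, and $\boldsymbol\alpha\notin Z$ then forbids any further type-$I$ pole, forcing $\alpha_1,\alpha_2\notin -(n-1)-2\mathbb N$; the $\alpha_1\leftrightarrow\alpha_2$ symmetry of \eqref{Kpalpha} and the explicit choice $a_3=m-\rho$, $a_1+a_2=l+\rho$ (so that the total is $k$) then produce the desired $(a_1,a_2,a_3)$, after checking the interval meets $\{0,\dots,l+\rho\}$ for every admissible $p$.
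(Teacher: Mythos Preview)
Your overall strategy is sound, but the case split you make is incorrect, and this creates a genuine gap. You claim that the interval of good choices of $a_1$ in your main candidate $(a_1,l+1-a_1,0)$, namely $[\max(0,m-n+2-p),\min(l+1,-m-p)]$, is non-empty precisely when $2m\le n-2$. This is false: the condition $2m\le n-2$ only ensures that the two \emph{inner} endpoints satisfy $m-n+2-p\le -m-p$, but says nothing about whether $-m-p\ge 0$. Whenever $-m<p\le -1$ the upper endpoint $-m-p$ is negative and the interval is empty, no matter how large $n$ is. A concrete counterexample: take $n=10$, $k=3$, $l=1$ (so $m=2$ and $2m=4\le 8=n-2$), and $\alpha_1=-2$ (so $p=-1$), $\alpha_2=-18$, $\alpha_3=-4$. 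One checks that $\boldsymbol\alpha\notin Z$, yet your interval is $[\max(0,-5),\min(2,-1)]=[0,-1]=\emptyset$; indeed $I_{\boldsymbol\alpha}(a_1,2-a_1,0)=0$ for every $a_1\in\{0,1,2\}$ because the factor $\Gamma\bigl(\tfrac{\alpha_2+\alpha_3}{2}+2\rho+a_2\bigr)=\Gamma(-2+a_2)$ has a pole. Since you only invoke $a_3\ge 1$ in the ``complementary range'' $2m\ge n-1$, this example falls through the cracks. (Here $I_{\boldsymbol\alpha}(0,3,0)\ne 0$, so the proposition is of course true.)

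The underlying problem is that fixing $a_1+a_2=l+1$ is too rigid: when one of the side denominators $\Gamma\bigl(\tfrac{\alpha_2+\alpha_3}{2}+2\rho+a_2+a_3\bigr)$ or $\Gamma\bigl(\tfrac{\alpha_1+\alpha_3}{2}+2\rho+a_1+a_3\bigr)$ has a deep pole, one must take $a_1$ or $a_2$ larger than $l+1$. The paper's proof avoids this by letting $a_1+a_2$ range over the whole window $l<a_1+a_2\le k$; it then isolates the residual obstructions as Case~(A) ($\alpha_1$ or $\alpha_2$ itself of type~I) and Case~(B) (a further relation $\alpha_1+\alpha_3\in -2(n-1)-2\mathbb N$), and treats each with a separate explicit choice. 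Your argument could likely be repaired either by allowing $a_1+a_2$ to exceed $l+1$, or by invoking the $a_3\ge 1$ trick uniformly rather than only for $2m\ge n-1$; but as written, the dichotomy fails and the final ``after checking the interval meets $\{0,\dots,l+\rho\}$ for every admissible $p$'' is likewise unverified and suffers from the same issue when $|p|$ is small.
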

\begin{proof}
By \eqref{Kpalpha} $I_{\boldsymbol \alpha}(a_1,a_2,a_3)$ can be written as
\[\frac{(-k)_{a_1+a_2+a_3}\ (\frac{\alpha_1}{2}+\rho)_{a_1}\  (\frac{\alpha_2}{2}+\rho)_{a_2}(\frac{\alpha_3}{2}+\rho)_{a_3}}{\Gamma(-l+a_1+a_2)\ \Gamma(\frac{\alpha_2+\alpha_3}{2} +(n-1) +a_2+a_3)\ \Gamma(\frac{\alpha_3+\alpha_1}{2}+(n-1) +a_3+a_1)}
\]
Choose $a_3=0$ and $a_1, a_2$ such that $l< a_1+a_2\leq k$. Then $(-k)_{a_1+a_2+a_3}\neq 0$ and $\Gamma(-l+a_1+a_2)$ is finite, so that $I_{\boldsymbol \alpha}(a_1,a_2,a_3)$  is $\neq 0$ unless (perhaps) one of  the following is true :
\medskip

$\bullet$ $\alpha_1 \in  -(n-1)-2\mathbb N$   or $\alpha_2 \in -(n-1) -2\mathbb N$ 

$\bullet$  $ \alpha_2+\alpha_3 \in -2(n-1) -2\mathbb N$ or $\alpha_3+\alpha_1 \in -(n-1)-2\mathbb N$.
\medskip

Up to a permutation of the indices $1$ and $2$, we are reduced to examine the following cases:
\smallskip

$\bullet$ case (A) \[\alpha_1+\alpha_2+\alpha_3= -2(n-1)-2k, \alpha_1+\alpha_2 = -2(n-1)-2l, \alpha_1 =-(n-1)-2m\]
for some $k,l,m\in \mathbb N, k>l$.
\smallskip

$\bullet$ case (B) \[\alpha_1+\alpha_2+\alpha_3= -2(n-1)-2k, \alpha_1+\alpha_2 =-2(n-1)-2l,\alpha_1+\alpha_3 = -2(n-1)-2m\]
for some $k,l,m\in \mathbb N, k>l, k>m$.

In each case, we have to prove that either $\boldsymbol \alpha$ is in $Z$ or there exists $a_1,a_2,a_3$ such that $I_{\boldsymbol \alpha}(a_1,a_2,a_3)\neq 0$.
\medskip

{\bf Step 2.} Case (A)

From the assumptions, we get
\[\alpha_2 = -(n-1)-2(l-m)\ .\]
If $l\geq m$, then $\boldsymbol \alpha$ is in two planes of poles of type I, hence is in $Z$. So assume 
 that $l<m$. Conditions $(A)$ imply
\[\alpha_1+\alpha_3 = -(n-1) -2k-2l-2m, \quad \alpha_2+\alpha_3 = -(n-1)+2(m-k)\ .
\]

$\bullet$ Suppose that $n-1$ is odd. Then $\alpha_3=2l-2k\notin -(n-1) -2\mathbb N$. So $I_{\boldsymbol \alpha}(a_1,a_2,a_3)$ is equal to

\[\frac{ (-k)_{a_1+a_2+a_3}\, (-m)_{a_1}}{\Gamma(-l+a_1+a_2)} \quad \times NVT\] 
(NVT = non vanishing term).
As $l<k$ and $l<m$, it is possible to choose $a_1\in \mathbb N$ such that $a_1>l$ and $a_1\leq k, a_1\leq m$. Let $a_2=a_3=0$. Then $(-k)_{a_1+a_2+a_3} \neq 0, (-m)_{a_1}\neq 0$, and $ \Gamma(-l+a_1+a_2)$ is finite, thus $I_{\boldsymbol \alpha}(a_1,a_2,a_3)\neq0$. 
\smallskip

$\bullet$ Suppose that $n-1$ is even. Now $\alpha_3 = -(n-1) +2(\frac{n-1}{2} -k+l)$. Let   $p = \frac{n-1}{2} -k+l\in \mathbb Z$. If $p\leq 0$, then $\boldsymbol \alpha$ lies in two planes of poles of type I and hence belongs to $Z$. Assume that $p>0$. Now,
\[ \frac{\alpha_1+\alpha_3}{2} +n-1= \frac{n-1}{2} -m -k+l = -m+p\]
\[ \frac{\alpha_2+\alpha_3}{2} +(n-1) = \frac{n-1}{2}+m-k\ .\]
But $\frac{n-1}{2}+m-k >\frac{n-1}{2}+l-k>0$. Hence $I_{\boldsymbol \alpha}(a_1,a_2,a_3)$ is equal to
\[\frac{(-k)_{a_1+a_2+a_3} (-m)_{a_1}}{\Gamma(-l+a_1+a_2) \Gamma(-m+ p +a_1+a_3)}\ \times NVT
\]
First choose $a_1$ such that $m-p<a_1\leq m$ and $l\leq a_1$. Now choose $a_2$ such that $l<a_1+a_2\leq k$, and let $a_3=0$. It follows that $I_{\boldsymbol \alpha}(a_1,a_2,a_3) \neq 0$.
\medskip

{\bf Step 3}. {Case (B)}

We now assume that $\boldsymbol \alpha$ satisfy the conditions
\[\alpha_1+\alpha_2+\alpha_3 = -2(n-1)-2k,\quad \alpha_1 +\alpha_2 =-2(n-1)  -2l, \quad \alpha_1+\alpha_3 =-2(n-1) - 2m \ ,
\]
 where $l<k$ and $m<k$.
 
 We may further assume that $\alpha_j\notin -(n-1)-2\mathbb N, j=1,2,3$, otherwise we are in case $A$.
 
Now $\alpha_1 = -2(n-1)+2k-2m-2l$. If $k\geq (n-1)+l+m$, then $\alpha_1\in 2\mathbb N$, so that  $\boldsymbol \alpha\in Z$ by \eqref{ZIIalpha}. So assume that $k<(n-1)+l+m$. Then
$\alpha_2+\alpha_3 = -2(2k-m-l)$.

$\bullet$ Assume first that $2k-m-l<(n-1)$. Then 
\[ I_{\boldsymbol \alpha}(a_1,a_2,a_3) = \frac{(-k)_{a_1+a_2+a_3}}{ \Gamma(-l+a_1+a_2)\Gamma(-m+a_1+a_3)} \times NVT\ .
\]
For $a_1= \sup(l,m)+1, a_2=a_3=0$, the $\Gamma$ factors are finite and the factor in the numerator is not $0$ because $l,m<k$.
\smallskip

$\bullet$ Assume now that $2k-m-l\geq (n-1)$, so that altogether
\[k<l+m+(n-1)\leq 2k\ .\]
Then
\[ I_{\boldsymbol \alpha}(a_1,a_2,a_3)\]\[ = \frac{(-k)_{a_1+a_2+a_3}}{ \Gamma(-l+a_1+a_2)\Gamma(-m+a_1+a_3)\Gamma(-(2k-l-m-(n-1))+a_1+a_2)} \times NVT\ .
\]
Let us exhibit three nonnegative integers $a_1,a_2,a_3$ such that 
\begin{equation}\label{pasdevanne}
\begin{split}
&a_2+a_3\geq l+1\\ &a_1+a_3\geq m+1\\ &a_1+a_2> 2k-l-m-(n-1)\\ &a_1+a_2+a_3\leq k
\end{split}
\end{equation}
because for such a choice, $I_{\boldsymbol \alpha}(a_1,a_2,a_3)\neq 0$.
Recall that $k>l$ and $k>m$. Now the condition $l+m+(n-1)\leq 2k$ can be rewritten as $(k-l)+(k-m)\geq n-1$. Hence there exist integers  $p,q\geq 1$ such that 
\[k-l-p\geq 0,\qquad  k-m-q\geq 0, \qquad p+q=(n-1)\ .\]
As $n\geq 4$, either $p$ or $q$ is greater than or equal to $2$. Up to a permutation of the indices, we may assume that $p\geq 2$. Now let
\[ a_1=k-l-p+1,\quad a_2 = k-m-q,\quad a_3 = m+l-k+(n-1)-1
\]
The three coefficients are $\geq 0$. Moreover
\begin{equation*} 
\begin{split}&a_2+a_3 = l+(n-1)-q-1\\&a_1+a_3 = m+(n-1)-p \\&a_1+a_2 =  2k-l-m-(n-1)+1 \\&a_1+a_2+a_3=k
\end{split}
\end{equation*}
and conditions \eqref{pasdevanne} are satisfied as $(n-1)-q-1 = p-1\geq 1$ and $(n-1)-p=q\geq 1$.
 \end{proof}

In \cite{co}, the following result was obtained.

\begin{theorem}[generic multiplicity 1 theorem]\label{genuniq}
 Assume that  $\boldsymbol \lambda\in \mathbb C^3$ is not a pole. Then $K^{\boldsymbol \lambda}$ is, up to a scalar, the unique $\boldsymbol \lambda$-invariant distribution on $S\times S\times S$.
\end{theorem}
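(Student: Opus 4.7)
The plan is to analyze $\boldsymbol\lambda$-invariant distributions via the support filtration along the $G$-orbit stratification of $S\times S\times S$ recalled in Proposition 1.3. Given any $\boldsymbol\lambda$-invariant distribution $T$, the first step is to subtract a multiple of $\mathcal K^{\boldsymbol\lambda}$ so that the remainder is supported in the complement of the open orbit $\mathcal O_0$; the second step is to show that, when $\boldsymbol\lambda$ is not a pole, no nonzero $\boldsymbol\lambda$-invariant distribution can be supported on $\overline{\mathcal O_1}\cup\overline{\mathcal O_2}\cup\overline{\mathcal O_3}$. Combined, these yield $T\in\mathbb C\,\mathcal K^{\boldsymbol\lambda}$.

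For the first step, identify $\mathcal O_0\simeq G/H_0$, where $H_0$ is the stabilizer of a generic triple of three distinct points. Since $n\geq 4$, a direct geometric computation shows that $H_0$ is conjugate to $SO(n-2)$, a compact connected subgroup of $G$. Invariant distributions on $\mathcal O_0$ for the twisted action $\boldsymbol\pi_{\boldsymbol\lambda}$ correspond to $H_0$-invariant vectors in the fibre over the base point of the associated line bundle; since $H_0$ is connected it acts trivially on any one-dimensional character, so the space of such invariants has dimension at most one. As $\mathcal K^{\boldsymbol\lambda}\big|_{\mathcal O_0}$ is the nowhere-vanishing density $|x-y|^{\alpha_3}|y-z|^{\alpha_1}|z-x|^{\alpha_2}\,dx\,dy\,dz$, it is a nonzero element of this space, so the dimension is exactly one. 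Hence there exists $c\in\mathbb C$ such that $T':=T-c\,\mathcal K^{\boldsymbol\lambda}$ is supported in $S\times S\times S\setminus \mathcal O_0$.

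For the second step, I would apply Bruhat's theory \cite{b}. A distribution supported on a submanifold $N$ admits, locally and up to finite transverse order, a normal-derivative expansion as a sum of distributions on $N$ times normal derivatives of $\delta_N$. If such a distribution is invariant under a group action preserving $N$, imposing $\boldsymbol\pi_{\boldsymbol\lambda}$-invariance order by order in the transverse direction yields, at the top order, a linear algebraic condition on the leading transverse symbol: it must be an eigenvector of the isotropy action on a symmetric power of the conormal bundle, for a character determined by $\boldsymbol\lambda$ together with the Jacobian of the $G$-action on the normal bundle. Orbit by orbit, this solvability condition turns out to be precisely the pole condition of the corresponding type: on $\mathcal O_j$ for $j=1,2,3$ one finds $\alpha_j\in -(n-1)-2\mathbb N$ (pole of type I$_j$), while on $\mathcal O_4$ one obtains the type II condition $\alpha_1+\alpha_2+\alpha_3\in -2(n-1)-2\mathbb N$, on top of the type I conditions inherited from the transverse layers. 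Under the assumption that $\boldsymbol\lambda$ is not a pole, every such condition fails, so the Bruhat expansion of $T'$ vanishes at each order and $T'=0$.

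The principal obstacle is the explicit identification, at each stratum, of the character through which the isotropy group acts on successive symmetric powers of the conormal bundle, together with tracking the infinitesimal action $d\boldsymbol\pi_{\boldsymbol\lambda}$ of $\mathfrak p$ in a slice transverse to the orbit via Lemma 1.2. This produces a recursion on the transverse order which must be shown to have only the zero solution away from the poles; the bookkeeping is most delicate near the deepest stratum $\mathcal O_4$, where several layers of the filtration must be traversed simultaneously and type I and type II constraints appear together. Once the recursion is set up carefully (as carried out in \cite{co}), the conclusion of Theorem \ref{genuniq} follows directly.
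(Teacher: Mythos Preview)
Your proposal is correct and follows essentially the same approach as the paper. Note that the paper does not give a self-contained proof of Theorem \ref{genuniq} but cites \cite{co}, recalling only the two key lemmas (Lemma \ref{invsing1} and Lemma \ref{invsing2}) extracted from that argument: no nonzero $\boldsymbol\lambda$-invariant distribution is supported on $\mathcal O_j$ ($j=1,2,3$) unless $\alpha_j\in -(n-1)-2\mathbb N$, and none is supported on $\mathcal O_4$ unless $\alpha_1+\alpha_2+\alpha_3\in -2(n-1)-2\mathbb N$. Your Bruhat-theoretic sketch reproduces exactly this structure, and your Step~1 (uniqueness on the open orbit via $H_0\simeq SO(n-2)$) is the standard reduction used there as well.
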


In other words, when $\boldsymbol \lambda$ is not a pole, $\dim Tri(\boldsymbol \lambda)=1$ or, more precisely, $Tri(\boldsymbol \lambda) = \mathbb C \widetilde K^{\boldsymbol \lambda}$. 

The proof of this theorem used two results, which we now recall , because they will be needed in the sequel.

\begin{lemma}\label{invsing1}
 Let $\boldsymbol \alpha\in \mathbb C^3$ and $\boldsymbol \lambda$ the associated spectral parameter. Let $\mathcal O$ be a $G$-invariant open set of $S\times S \times S$ which contains $\mathcal O_3$ as a  closed submanifold. Let $T\neq 0$ be a distribution in $\mathcal O$, $\boldsymbol \lambda$-invariant and such that $Supp(T) \subset \mathcal O_3$. Then $\alpha_3 = -(n-1)-2k$ for some $k\in \mathbb N$. Moreover, if  $S$ is a $\boldsymbol \lambda$-invariant distribution in $\mathcal O$  such that $Supp(S) \subset \mathcal O_3$, then $S$ is proportional to $T$.
 \end{lemma}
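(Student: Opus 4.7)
The plan is to run a Bruhat-type argument on the closed submanifold $\mathcal O_3 = \{x=y\neq z\}$ of $\mathcal O$, using the transverse-symbol formalism introduced in Section 5. Since $\mathcal O_3$ has codimension $n-1$ in $\mathcal O$, Schwartz's structure theorem furnishes a well-defined finite transverse order $k\geq 0$ for $T$; in local tubular coordinates $(u,v;\eta)\in \mathcal O_3\times \mathbb R^{n-1}$ near a point of $\mathcal O_3$ (with $(u,v)\mapsto(u,u,v)$ the embedding and $\eta$ a transverse coordinate normal to $\mathcal O_3$), $T$ admits a unique local decomposition
$$T \;=\; \sum_{\vert\beta\vert\leq k} T_\beta(u,v)\otimes \partial^\beta_\eta \delta_0(\eta),$$
with distributions $T_\beta$ on $\mathcal O_3$, some $T_\beta$ of order $\vert\beta\vert=k$ being nonzero.

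The top-order part defines, by the formalism of Section 5, an intrinsic transverse symbol $\sigma_k(T)$, a distribution on $\mathcal O_3$ with values in $S^k(N^*)\otimes \mathcal L$, where $N^*$ is the conormal bundle of $\mathcal O_3$ and $\mathcal L$ is the restriction to $\mathcal O_3$ of the conformal line bundle of weight $\boldsymbol\lambda$. The $\boldsymbol\pi_{\boldsymbol\lambda}$-invariance of $T$ forces $\sigma_k(T)$ to be a $G$-invariant section of this twisted bundle. Since $G$ acts transitively on $\mathcal O_3$ (two-point transitivity of $G$ on $S$), $\sigma_k(T)$ is determined by its value at the base point $(u_0,v_0)=(\mathbf 1,-\mathbf 1)$, and that value must be invariant under the stabilizer $L = MA$, with $M\simeq SO(n-1)$ fixing $\pm\mathbf 1$ and $A=\{a_t\}$ the hyperbolic one-parameter subgroup from Section 1.

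Next I would analyze the two factors of $L$ separately. The $M$-invariance forces the transverse order to be even, $k=2m$, and identifies the $M$-invariant subspace of $S^{2m}(N^*_{(u_0,v_0)})$ as the one-dimensional line spanned by the symbol of $\Delta_\eta^m\delta_0$. The $A$-invariance then reduces, on that line, to a single scalar equation. Tracking the $\kappa(a_t^{-1},\cdot)$ factors from the representation at $\mathbf 1$ and at $-\mathbf 1$, together with the scaling weight of $\partial^{2m}_\eta\delta_0$ under the transverse contraction by $a_t^{-1}$, and using $\kappa(a_t,\mathbf 1)=e^{-t}$ and $\kappa(a_t,-\mathbf 1)=e^{t}$, this equation becomes $\lambda_1+\lambda_2-\lambda_3+\rho+2m=0$, equivalently, via $\alpha_3 = -\rho+\lambda_1+\lambda_2-\lambda_3$, $\alpha_3 = -(n-1)-2m$, as claimed.

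For the final uniqueness assertion, any second $\boldsymbol\lambda$-invariant distribution $S$ on $\mathcal O$ supported on $\mathcal O_3$ must have transverse order again equal to $2m$ (for fixed $\alpha_3$, the above $A$-equation determines $m$ uniquely). Its top symbol then lies in the same one-dimensional $MA$-invariant line, so equals $c\cdot\sigma_{2m}(T)$ for some $c\in \mathbb C$. The difference $S-cT$ is again $\boldsymbol\lambda$-invariant, supported on $\mathcal O_3$, and of strictly smaller transverse order; if nonzero it would, by the same analysis, force $\alpha_3=-(n-1)-2m'$ with $m'<m$, contradicting the fixed value of $\alpha_3$. Hence $S=cT$. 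The main obstacle is the careful bookkeeping of conformal factors in the $A$-eigenvalue computation; the transverse-symbol framework of Section 5 is designed precisely to carry this out coordinate-freely, isolating the single scalar equation above.
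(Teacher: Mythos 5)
Your proof is correct and follows essentially the route the paper intends: the lemma is stated there as a ``standard application of Bruhat's theory,'' and your argument is exactly that theory carried out in the transverse-symbol formalism of Section 5 --- reduction to the $MA$-invariants in $S^m$ of the conormal space at $(\mathbf 1,\mathbf 1,-\mathbf 1)$, with the $A$-weight equation $\lambda_1+\lambda_2-\lambda_3+\rho+2m=0$ matching the computation the paper itself performs in Section 6.1. The evenness from $SO(n-1)$-invariance, the resulting condition $\alpha_3=-(n-1)-2m$, and the descending-order argument for uniqueness are all sound.
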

 
 \begin{lemma}\label{invsing2}
 Let $\boldsymbol \alpha\in \mathbb C^3$ and $\boldsymbol \lambda$ the associated spectral parameter. Let $T\neq 0$ be a distribution on $S\times S\times S$ which is $\boldsymbol \lambda$-invariant and such that $Supp(T)\subset \mathcal O_4$. Then $\alpha_1+\alpha_2+\alpha_3 = -2(n-1)-2k$ for some $k\in \mathbb N$. 
\end{lemma}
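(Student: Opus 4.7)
The strategy is to apply the theory of distributions smoothly supported on a submanifold (Section 5) to the closed orbit $\mathcal O_4\simeq G/P$, where $P$ is the parabolic subgroup stabilizing $(\mathbf 1,\mathbf 1,\mathbf 1)$. Being compactly supported on a compact manifold, $T$ has finite order and hence finite transverse order along $\mathcal O_4$; let $N\geq 0$ denote this top transverse order. The hypothesis $T\ne 0$ makes the top transverse symbol $\sigma_N(T)$ a non-zero section, over $\mathcal O_4$, of the $G$-equivariant vector bundle whose fiber at a point $p\in\mathcal O_4$ is the space of order-$N$ distributions at the origin of the normal space $T_pS\oplus T_pS$. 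By $G$-equivariance this section is determined by its value at $\mathbf 1$, and that value must be $P$-equivariant for the action twisted by $\boldsymbol\pi_{\boldsymbol\lambda}$; in the Langlands decomposition $P=MAN_0$ with $M\simeq SO(n-1)$ and $A=\{a_t\}$ as in Section 1, this translates into separate $M$- and $A$-equivariance conditions at $\mathbf 1$.

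I first analyse the $A$-equivariance. At the fixed point $(\mathbf 1,\mathbf 1,\mathbf 1)$ the representation $\boldsymbol\pi_{\boldsymbol\lambda}(a_t)$ acts as the scalar $\kappa(a_t,\mathbf 1)^{\lambda_1+\lambda_2+\lambda_3+3\rho}=e^{-t(\lambda_1+\lambda_2+\lambda_3+3\rho)}$; the differential $Da_t(\mathbf 1)$ is conformal of ratio $\kappa(a_t,\mathbf 1)=e^{-t}$, hence transverse differential operators of total order $N$ are multiplied by $\kappa(a_t,\mathbf 1)^N=e^{-Nt}$; and changing variables by $a_t$ on $\mathcal O_4\simeq S$ brings in the Jacobian $\kappa(a_t,\cdot)^{n-1}$. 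Writing out the leading transverse part of the identity $T\circ\boldsymbol\pi_{\boldsymbol\lambda}(a_t)=T$ with these three contributions produces, after cancellation, the scalar equation $\lambda_1+\lambda_2+\lambda_3+3\rho+N=n-1$, equivalently
\[
\alpha_1+\alpha_2+\alpha_3=-2(n-1)-N.
\]

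It remains to force $N$ to be even. Since $M\subset K$ consists of Euclidean rotations fixing $\mathbf 1$, the conformal factor $\kappa(m,\mathbf 1)$ is $1$ for every $m\in M$, so the twist of $\boldsymbol\pi_{\boldsymbol\lambda}|_P$ on the fiber at $\mathbf 1$ is $M$-trivial and $\sigma_N(T)|_{\mathbf 1}$ must be genuinely $M$-invariant as an element of $\operatorname{Sym}^N(T_\mathbf 1 S\oplus T_\mathbf 1 S)$ (the normal density line is $M$-trivial as well because $M$ is orientation preserving). Under $M\simeq SO(n-1)$ acting diagonally on $\mathbb R^{n-1}\oplus\mathbb R^{n-1}$, the first fundamental theorem of invariant theory, applicable here since $n-1\geq 3$ by the standing assumption $n\geq 4$ (compare Lemma \ref{Kgen} and the following remark), shows that the algebra of $M$-invariant polynomials is generated by the three quadratics $|\eta|^2$, $|\zeta|^2$, $\langle\eta,\zeta\rangle$. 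Consequently every non-zero $M$-invariant element of $\operatorname{Sym}^N$ has even total degree, so $N=2k$ for some $k\in\mathbb N$; substituting into the equation above gives $\alpha_1+\alpha_2+\alpha_3=-2(n-1)-2k$, as required. The principal technical obstacle is the careful bookkeeping of the density and Jacobian factors underlying the $A$-weight identification in the first step; once this is handled inside the framework of Section 5 the argument is the codimension-$2(n-1)$ analogue of the codimension-$(n-1)$ proof of Lemma \ref{invsing1}.
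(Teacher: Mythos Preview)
Your argument is correct and is precisely the ``standard application of Bruhat's theory'' that the paper invokes without detail (referring to \cite{co}): filter by transverse order along $\mathcal O_4$, pass to the top graded piece, and read off the $A$-weight and the $M$-parity at the fixed point $(\mathbf 1,\mathbf 1,\mathbf 1)$; the computation you carry out is the exact analogue of the symbol calculation the paper performs in Section~6.2 for poles of type~II. One small point worth making explicit: to invoke the transverse-symbol formalism of Section~5 you should first note that $T$ is \emph{smoothly} supported on $\mathcal O_4$, which follows from Lemma~\ref{invsmooth} applied to the relations $T\circ d\boldsymbol\pi_{\boldsymbol\lambda}(X)=0$, $X\in\mathfrak g$ (the vector fields $\widetilde X$ span $T\mathcal O_4$ by transitivity of $G$ on $\mathcal O_4$).
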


The proof of both statements is obtained by standard application of Bruhat's theory, and is essentially contained in \cite{co}. The only exception is the second statement of the first lemma, the multiplicity one result, but it is only a refinement and its proof can be considered as routine. Notice that there is no multiplicity 1 assertion in the second lemma, a point which reflects a basic difference between poles of type I and poles of type II.

\section {Distributions smoothly supported on a submanifold}\label{smoothsupp}

 Let $E$ be a real vector space of dimension $n$, and $F$ a subspace of codimension $p$.  Choose  coordinates $(x_1,x_2,\dots,x_p,y_{p+1}\dots, y_n)$ such that 
 \[F= \{ (x,y)\in \mathbb R^p\times \mathbb R^q,\  x_1=x_2=x_p=0\}\ .
 \]
For $J=(j_1, j_2,\dots, j_p)$ a multi-index, let 
 \[x^J = x_1^{j_1}\dots x_p^{j_p},\quad \partial^J = \Big(\frac{\partial}{\partial x_1}\Big)^{j_1} \dots  \Big(\frac{\partial}{\partial x_p}\Big)^{j_p}\ .\]
 
 Let $U$ be an open set in $E$. A (smooth) \emph{transverse differential operator} on $U$ is an operator from $\mathcal C^\infty(U)$ into $\mathcal C^\infty(F\cap U)$ given by
 \[Df (y) = \sum_{J} a_J(y)\, \partial^J\!f \,(y)\ ,\quad f\in \mathcal C^\infty(U),\  y\in F\cap U
 \]
 where $J$ runs through all multi-indices and $(a_J)$ is a locally finite family of smooth functions on $F\cap U$. Notice that the functions $a_J$ are well determined, as can be observed by testing the operator $D$ against the functions $x^J$. A transverse differential operator $D$ is said to be \emph{of transverse order $\leq m$}  if $a_J\equiv 0$ for all multi-indices $J$ such that $\vert J\vert = j_1+j_2+\dots+j_p>m$.
  
 Let $E^*$ be the dual space of $E$, and let $F^\perp = \{ \psi \in E^*, \psi_{\vert E} =0\}$. The space $F^\perp$ is canonically isomorphic to $(E/F)^*$. The coordinate forms $\xi_j : (x,y) \mapsto x_j,  1\leq j\leq p$ form a basis of $F^\perp$.
 
 Let $D = \sum_{\vert J\vert\leq m}a_J \partial^J$ be a transverse differential operator on $U$ of transverse order $\leq m$. Then its \emph{symbol} $\sigma_m(D)$ is  defined by
 \[\sigma_m(D)( y,\xi) = \sum_{ \vert J\vert =m} a_J(y) \xi^J\ ,
 \] 
 where $y\in F\cap U$ and $\xi\in F^\perp$. To see the intrinsic  character of the symbol, there is a useful formula. For $y\in F\cap U$ and $\xi\in F^\perp$, choose smooth functions $f,\varphi$ on $U$ such that $f(y) = 1$ and $d\varphi(y) = \xi$. Then
 \begin{equation}\label{intrinsic}
 \sigma(D)(y,\xi) = \lim_{t\rightarrow \infty} t^{-m} e^{-t\varphi} D(e^{t\varphi} f)(y)\ .
 \end{equation}

Fix a Lebesgue measure $dy$ on $F$. To any transverse differential operator $D$ is associated the distribution $T_D$ given by
 \[T_D(f) = \int_{U\cap F} Df(y) dy
 \]
 for any test function $f\in \mathcal C_c^\infty(U)$.
 
 It is possible to characterize the distributions which are of this form. For $T$ a distribution on $U$, denote by $WF(T)$ the \emph{wave front set} of $T$  (see \cite{h} ch. VIII).

\begin{lemma}\label{smoothtrans}
 Let $T$ be a distribution on $U$, with $Supp(T)\subset F\cap U$. Then the two following properties are equivalent :

$i)$ $WF(T) \subset (F\cap U)\times F^\perp$

$ii)$ There exists a unique transverse differential operator $D$ on $F\cap U$  such that 
\[(T,\varphi) = \int_F D\varphi(y) dy\ .
\]

\end{lemma}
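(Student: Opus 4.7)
The direction $(ii)\Rightarrow (i)$ is immediate: each summand $a_J(y)\otimes \partial^J_x\delta(x)$ of $T_D$ is the tensor product of a smooth function on $F\cap U$ with $\partial^J_x\delta(x)$, whose wave front set is the conormal bundle of $F$, namely $(F\cap U)\times (F^\perp\setminus 0)$. Since multiplication by a smooth function does not enlarge the wave front set, it follows that $WF(T_D)\subset (F\cap U)\times F^\perp$.

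For $(i)\Rightarrow (ii)$, my plan is to combine Schwartz's structure theorem with a wave-front analysis. Shrinking $U$ if necessary, Schwartz supplies a unique local decomposition
$$T = \sum_{|J|\leq N} v_J(y)\otimes \partial^J_x\delta(x),\qquad v_J\in \mathcal D'(F\cap U),$$
where $N$ bounds the transverse order of $T$. Choose auxiliary cutoffs $\psi_J\in \mathcal C_c^\infty(\mathbb R^p)$ satisfying $\partial^K_x\psi_J(0)=\delta_{K,J}$ for $|K|\leq N$; then one has the extraction identity
$$T(\phi\,\psi_J) = (-1)^{|J|}v_J(\phi),\qquad \phi\in \mathcal C_c^\infty(F\cap U),$$
which both characterizes the $v_J$ and yields their uniqueness. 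The candidate operator is $D = \sum_J a_J\partial^J$ with $a_J:=(-1)^{|J|}v_J$.

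The crux of the proof, and the step I expect to be the main obstacle, is upgrading each $v_J$ to a function in $\mathcal C^\infty(F\cap U)$. Fix $y_0\in F\cap U$ and a nonzero covector $\eta_0\in T^*_{y_0}F$; the goal is $(y_0,\eta_0)\notin WF(v_J)$. By hypothesis, the covector $(0,\eta_0)$ based at $(0,y_0)$ is not in $WF(T)$, because its tangential component $\eta_0$ is nonzero. By the standard local characterization of the wave front set, there exist an open neighborhood $V$ of $(0,y_0)$ in $U$ and a conic neighborhood $\Gamma$ of $(0,\eta_0)$ such that $\widehat{\varphi T}$ decays rapidly in $\Gamma$ for every $\varphi\in \mathcal C_c^\infty(V)$. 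Choose $\phi\in \mathcal C_c^\infty(F\cap U)$ with $\phi(y_0)\neq 0$ and with support so small that $\phi(y)\psi_J(x)\in \mathcal C_c^\infty(V)$; then $\widehat{\phi\psi_J T}(\xi,\eta)$ decays rapidly in $\Gamma$. Restricting to $\xi=0$ and using the extraction formula to compute
$$\widehat{\phi\psi_J T}(0,\eta) = T\bigl(\phi(y)\psi_J(x)e^{-iy\cdot \eta}\bigr) = (-1)^{|J|}\widehat{\phi v_J}(\eta)$$
yields rapid decay of $\widehat{\phi v_J}$ in a conic neighborhood of $\eta_0$. Hence $(y_0,\eta_0)\notin WF(\phi v_J)$, and since $\phi(y_0)\neq 0$ this gives $(y_0,\eta_0)\notin WF(v_J)$. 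Varying $y_0$ and $\eta_0$ shows $WF(v_J)=\emptyset$, so $v_J\in \mathcal C^\infty(F\cap U)$, which exhibits $T$ as $T_D$. The passage from the local decomposition to a globally defined $D$ on $F\cap U$ is routine: the local uniqueness of the $a_J$ makes them agree on overlaps, and a partition of unity assembles them into a global transverse differential operator.
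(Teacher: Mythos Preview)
The paper does not give its own proof of this lemma; immediately after the statement it simply writes ``For a proof see \cite{bc}''. So there is nothing in the present paper to compare your argument against.

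That said, your proof is correct and self-contained. The direction $(ii)\Rightarrow(i)$ is handled by the standard tensor-product bound on wave front sets (H\"ormander, Theorem~8.2.9), exactly as you indicate. For $(i)\Rightarrow(ii)$, your strategy---Schwartz's local structure theorem to write $T=\sum_{|J|\le N} v_J(y)\otimes\partial_x^J\delta(x)$ with $v_J\in\mathcal D'(F\cap U)$, followed by a wave-front argument to upgrade each $v_J$ to a smooth function---is the natural one. The key computation, restricting $\widehat{\phi\psi_J T}(\xi,\eta)$ to the slice $\xi=0$ and identifying the result with $(-1)^{|J|}\widehat{\phi v_J}(\eta)$, is legitimate because $\phi\psi_J T$ has compact support (so its Fourier transform is an honest function) and the slice $\{\xi=0,\ \eta\text{ near }\eta_0\}$ lies inside the conic neighbourhood $\Gamma$ of $(0,\eta_0)$. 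Your extraction functions $\psi_J$ (e.g.\ $\chi(x)\,x^J/J!$ with $\chi$ a cutoff) do the job, and the local uniqueness of the coefficients $a_J=(-1)^{|J|}v_J$ makes the gluing to a global transverse operator on $F\cap U$ routine, as you note. No gaps.
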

For a proof see \cite{bc}. A distribution $T$ which satisfy the conditions of  the Lemma is said to be \emph{smoothly supported} on $F$.

An important source of smoothly supported distribution is obtained by the following result, which is a version in the present context  of the principle "invariance implies smoothness".

\begin{lemma}\label{invsmooth}
 Let $X_j, 1\leq j\leq m$ be a family of smooth vector fields on $U$ such that
\[\forall y\in F\cap U,\quad \forall j, 1\leq j\leq m, \ X_j(y)\in F \text{ and } \{X_j(y), 1\leq j\leq m\} \text{ generate } F \ .\]
Let $(a_j)_{1\leq j\leq m}$ be a family of smooth functions on $E$ and $(U_j)_{1\leq j\leq m}$ a family of distributions on $U$ smoothly supported in $F\cap U$. Let $T$ be a distribution on $U$ which satisfies
\smallskip

$i)$ $Supp\, (T)\subset F\cap U$
\smallskip

$ii)$  $\forall j,1\leq j\leq m$, $(X_j+a_j) T=U_j$.
\smallskip

\noindent
Then $T$ is smoothly supported on $F\cap U$.
\end{lemma}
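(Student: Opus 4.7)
The plan is to appeal to the microlocal characterization provided by Lemma \ref{smoothtrans}: since $Supp(T)\subset F\cap U$ is already part of the hypothesis, it suffices to establish that $WF(T)\subset (F\cap U)\times F^\perp$, i.e.\ that every covector $\xi_0$ at a singular point $y_0$ of $T$ annihilates $F=T_{y_0}F$.

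I would argue by contradiction. Assume $(y_0,\xi_0)\in WF(T)$ with $\xi_0\notin F^\perp$. Then $\xi_0$ does not vanish identically on $F$, so there exists $v\in F$ with $\langle\xi_0,v\rangle\ne 0$. By the spanning hypothesis on $\{X_j(y_0)\}_{1\le j\le m}$, one can write $v=\sum_j c_j X_j(y_0)$, and hence $\langle\xi_0,X_{j_0}(y_0)\rangle\ne 0$ for at least one index $j_0$.

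Next, I would invoke the standard inclusion for wave front sets under a differential operator $P$ (see \cite{h}):
\[
WF(T)\subset WF(PT)\cup \mathrm{Char}(P),
\]
where $\mathrm{Char}(P)$ denotes the zero set of the principal symbol of $P$. Applied to $P=X_{j_0}+a_{j_0}$, whose principal symbol at $(y,\xi)$ is $\langle\xi,X_{j_0}(y)\rangle$ (up to a scalar), the choice of $j_0$ guarantees $(y_0,\xi_0)\notin \mathrm{Char}(P)$. The hypothesis $(X_{j_0}+a_{j_0})T=U_{j_0}$ then forces $(y_0,\xi_0)\in WF(U_{j_0})$. But $U_{j_0}$ is smoothly supported on $F\cap U$, so by the same Lemma \ref{smoothtrans} one has $WF(U_{j_0})\subset (F\cap U)\times F^\perp$, contradicting $\xi_0\notin F^\perp$.

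The argument is short and entirely microlocal, so no serious obstacle is anticipated. The tangency hypothesis $X_j(y)\in F$ for $y\in F$ is used implicitly through the identification $T_{y_0}F=F$, while the spanning hypothesis plays the decisive role: it forces every candidate singular covector outside $F^\perp$ to be detected by the non-vanishing of the principal symbol of at least one $X_{j_0}+a_{j_0}$. This is the familiar \emph{invariance implies regularity} principle, here in a microlocal form adapted to first-order non-homogeneous operators and to the purely transverse direction required for smooth support.
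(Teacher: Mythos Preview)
Your proof is correct and follows essentially the same route as the paper: reduce via Lemma~\ref{smoothtrans} to the wave-front inclusion $WF(T)\subset (F\cap U)\times F^\perp$, then use the standard inclusion $WF(T)\subset WF(PT)\cup\mathrm{Char}(P)$ for $P=X_j+a_j$ together with the spanning hypothesis to rule out any covector outside $F^\perp$. The paper phrases this as a direct intersection over all $j$ rather than a contradiction, but the content is identical.
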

\begin{proof} Using Lemma \ref{smoothtrans}, it suffices to prove that $WF(T)\subset (F\cap U)\times F^\perp$. Now for $D$ any differential operator on $U$,
\[WF(T) \subset Char(D)\cup WF(DT)\ ,
\]
where $Char(D)\subset U\times E^*$ is the \emph{characteristic set} of $D$ (see \cite{h} Theorem 8.3.1).
Apply to $D = X_j +a_j$. Observe that $WF(U_j) \subset (F\cap U)\times F^\perp$ by assumption. Next $Supp(T)\subset F\cap U$, and for $x\in F\cap U, \xi \in E^*$, \[(x,\xi)\in Char(X_j+a_j)\Longleftrightarrow \xi(X_j(x))=0\ .\]

As the $X_j(x)$ generate $F$, it follows that $(x,\xi)\in WF(T)$ implies $\xi\in F^\perp$ and the statement follows.
\end{proof}

Let $E'$ be another vector space, also of dimension $n$ and $F'$ a subspace of  $E'$ also of codimension $p$. Let $\Phi$ be a diffeomorphism on an open subset $U'$ of $E'$ which satisfies
 $\Phi(U\cap F) \subset U'\cap F'$. Let $ \Phi^* f = f\circ \Phi $. This formula defines isomorphisms between  $ \mathcal C^\infty(U')$ (resp.  $\mathcal C^\infty(U'\cap F')$) and $ \mathcal C^\infty(U)$ (resp $\mathcal C^\infty(U\cap F)$). 
 If $D$ is a transverse differential operator on $U$ of order $\leq m$, let 
 \[\Phi_*D = {\Phi^*}^{-1}\circ D\circ \Phi^*\ .\] 
 Then $\Phi_*D$  is a transverse differential operator on $U'$ (relative to $F'$) of transverse order $\leq m$. The transverse symbols of $D$ and $\Phi_* D$   are related by the formula \begin{equation}\label{symbdiffeo}
\sigma_m(\Phi_ *D) (y,\xi)  = \Phi_* \,\sigma_m(D) (y,\xi) = \sigma_m(D)\big(\Phi^{-1} (y), \xi\circ d\Phi^{-1}(y)^{-1}\big)\ .
\end{equation}
The formula is easily obtained from \eqref{intrinsic}.

Let $T$ be a distribution on $U$ smoothly supported on $F$, and let $D_T$ be the associated transverse differential operator. Let $\Phi_*T$ be the distribution on $U'$ defined by
\[ (\Phi_* T, \varphi) = (T, \varphi\circ \Phi), \qquad \varphi\in\mathcal C_c^\infty(U') \]
Then $\Phi_*T$ is smoothly supported on $U'\cap F'$. Fix a Lebesgue  measure $dx'$ on $F'$. Then there exists an associated  transverse differential operator $D_{\Phi_*T}$ on $U'\cap F$.

\begin{lemma} The transverse differential operator $D_{\Phi_*T}$ is given by 
\begin{equation}\label{transsymbol}
D_{\Phi_*T} = \vert \det \big( (D\Phi^{-1})_{\vert F'\rightarrow F}\big)\vert\, \Phi_*D_T
\end{equation}

\end{lemma}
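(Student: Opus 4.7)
The strategy is to unfold the definitions of $\Phi_*T$ and of the transverse differential operator attached to a smoothly supported distribution, and then perform the change of variables $y'=\Phi(y)$ on the integral over $F\cap U$, reading off the resulting operator from the uniqueness clause of Lemma~\ref{smoothtrans}.

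Concretely, I would start with a test function $\varphi\in\mathcal{C}_c^\infty(U')$ and write
\[
(\Phi_*T,\varphi)=(T,\varphi\circ\Phi)=(T,\Phi^*\varphi)=\int_{U\cap F}D_T(\Phi^*\varphi)(y)\,dy,
\]
using the definition of $D_T$ as the transverse operator attached to $T$. By the very definition of $\Phi_*D_T=(\Phi^*)^{-1}\circ D_T\circ\Phi^*$, one has
\[
D_T(\Phi^*\varphi)(y)=\bigl((\Phi_*D_T)(\varphi)\bigr)(\Phi(y)),\qquad y\in U\cap F,
\]
so the integral becomes
\[
(\Phi_*T,\varphi)=\int_{U\cap F}\bigl((\Phi_*D_T)(\varphi)\bigr)(\Phi(y))\,dy.
\]

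Now I perform the change of variables $y'=\Phi(y)$ on the submanifold $F$. Since $\Phi$ maps $U\cap F$ diffeomorphically into $U'\cap F'$, the restriction $\Phi_{|F}\colon F\to F'$ has a Jacobian whose absolute value is $|\det((D\Phi^{-1})_{|F'\to F})(y')|$ (with respect to the fixed Lebesgue measures $dy$ on $F$ and $dy'$ on $F'$); this converts $dy$ into $|\det((D\Phi^{-1})_{|F'\to F})(y')|\,dy'$. Thus
\[
(\Phi_*T,\varphi)=\int_{U'\cap F'}\bigl|\det((D\Phi^{-1})_{|F'\to F})(y')\bigr|\,\bigl((\Phi_*D_T)(\varphi)\bigr)(y')\,dy'.
\]

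Since $\Phi_*T$ is smoothly supported on $F'$ (a fact that follows from Lemma~\ref{smoothtrans} applied to $\Phi_*T$, using that the wave front set transforms covariantly under the diffeomorphism $\Phi$ and that $F^\perp$ is carried to $(F')^\perp$ by $(d\Phi^{-1})^t$), the uniqueness in part $ii)$ of Lemma~\ref{smoothtrans} forces $D_{\Phi_*T}$ to coincide with the operator $|\det((D\Phi^{-1})_{|F'\to F})|\cdot \Phi_*D_T$ appearing in the right-hand side, which is exactly \eqref{transsymbol}.

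The main technical point is the identification of the Jacobian of the change of variables with $|\det((D\Phi^{-1})_{|F'\to F})|$: one must verify that, with respect to chosen Lebesgue measures on the linear subspaces $F$ and $F'$, the Jacobian of $\Phi_{|F}$ at $y$ is indeed the determinant of its differential viewed as a linear isomorphism $F\to F'$. This is purely a linear-algebra/change-of-variables verification, but it is the only non-trivial step; the rest consists in tracking the definitions and invoking the uniqueness of the transverse operator.
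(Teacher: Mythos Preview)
Your proof is correct and follows essentially the same approach as the paper: unfold the definition of $\Phi_*T$, write it as an integral over $F\cap U$ via $D_T$, perform the change of variables $y'=\Phi(y)$, and read off the transverse operator. Your version is simply more explicit than the paper's two-line argument, in particular in identifying $D_T(\Phi^*\varphi)(y)$ with $\bigl((\Phi_*D_T)\varphi\bigr)(\Phi(y))$ and in invoking the uniqueness clause of Lemma~\ref{smoothtrans}.
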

\begin{proof} By definition,
\[(\Phi_*T,f) = \int_{F\cap U} D(f\circ \Phi)(y) dy = \int_{F\cap U'} D(f\circ \Phi)\circ \Phi^{-1}(y')\,\vert \det \big( (D\Phi^{-1}(y'))_{\vert F}\big)\vert\ \,dy'
\]
by the change of variables $y'=\Phi(y)$. The result follows.
\end{proof}
For $T$ a distribution smoothly supported on $U\cap F$, which is of transverse order less than $m$, define its \emph{ transverse symbol} $\sigma_m(T)$ by
\[\sigma_m(T)(x,\xi) = \sigma_m(D_T)(x,\xi)\, dx\ .
\]
These notions can be generalized to the setting of a regular submanifold $N$ of a manifold $M$  to get definitions of a  transverse differential operator on $N$,  of a distribution on $M$ smoothly supported on $N$ and of its transverse symbol (after the choice of a smooth measure is chosen on $N$). In particular, the transverse symbol of a differential operator of order $\leq m$ is a section of the bundle $S^m\mathcal N$ ($m$-symmetric tensor product of the conormal bundle on $N$), whereas the transverse symbol of a distribution on $M$ smoothly supported in $N$ and of transverse order $\leq m$ is a section of the bundle $S^m\mathcal N \otimes \vert \Lambda\vert$, where $\vert\Lambda\vert$ is the density  bundle of $N$ (this is the reason to add "$dx$" in the definition of the transverse symbol of a distribution). We omit details (see \cite{kv} for related ideas).

The main properties of the symbol map are summarized in the next proposition.

\begin{proposition}\label{symbol} Let $N$ be a regular submanifold of a manifold $M$. Let $T$ be a distribution smoothly supported in $N$, of transverse order $\leq m$. Let $\sigma_m(D)$ be the transverse symbol of $T$.
\smallskip
 
 $i)$ let $\Phi$ be a diffeormorphism of $M$, such that $\Phi(N)=N$. Then $\Phi_*T$ is smoothly supported on $N$, of transverse order less than or equal to $m$ and its transverse symbol is given by
 \[\sigma_m(\Phi_* T)(x,\xi) = \big\vert \det \big(D\Phi^{-1}_{\vert T_xN}(x)\big)\big\vert \sigma_m(\Phi^{-1}(x), \xi\circ D\Phi^{-1}(x)^{-1}\big)\ .
 \]
 
 $ii)$ Let $a$ be a smooth function on $M$. Then $aT$ is smoothly supported on $N$ and is of transverse order $\leq m$. Its transverse symbol satisfies
$\sigma_m(aT) = a \,\sigma_m(T)$.
\smallskip

$iii)$ if $\sigma_T\equiv 0$, then $T$ is of transverse order  $\leq m-1$.
\end{proposition}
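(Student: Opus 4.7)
The plan is to verify all three claims in local coordinates, representing $T$ through Lemma \ref{smoothtrans} by a transverse differential operator $D_T=\sum_{|J|\le m}a_J\partial^J$ and then invoking the transformation formulas \eqref{symbdiffeo} and \eqref{transsymbol} proved just above the statement. All three parts reduce to direct computations in an adapted chart $(x_1,\dots,x_p,y_{p+1},\dots,y_n)$ in which $N$ is cut out by $x=0$, together with a verification that the resulting objects are coordinate-independent.

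For $(i)$, since $\Phi$ preserves $N$, both $U$ and $\Phi(U)$ can be taken as adapted charts, and \eqref{transsymbol} yields $D_{\Phi_*T}=|\det((D\Phi^{-1})_{\vert T_xN})|\,\Phi_*D_T$. Taking the top-order part of this identity and applying \eqref{symbdiffeo} to $\Phi_*D_T$ gives precisely the stated formula for $\sigma_m(\Phi_*T)$. The Jacobian factor is exactly the density contribution required so that $\sigma_m$ is a well-defined section of $S^m\mathcal N\otimes|\Lambda|$ on $N$; it is this point that makes the statement intrinsic rather than chart-dependent.

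For $(ii)$, direct computation from $(aT,\varphi)=(T,a\varphi)=\int_{N\cap U}D_T(a\varphi)(y)\,dy$ and the Leibniz rule shows that $D_{aT}$ is the operator $\varphi\mapsto D_T(a\varphi)$, whose transverse order clearly does not exceed $m$. The only term contributing to the top transverse symbol is $\sum_{|J|=m}a_J(y)a(y)\partial^J\varphi(y)$, so $\sigma_m(aT)=a\,\sigma_m(T)$. For $(iii)$, the hypothesis $\sigma_m(T)\equiv 0$ locally asserts that the homogeneous polynomial $\xi\mapsto \sum_{|J|=m}a_J(y)\xi^J$ vanishes on $F^\perp$ for every $y\in N\cap U$; equivalently, the intrinsic formula \eqref{intrinsic} forces each coefficient $a_J$ with $|J|=m$ to vanish. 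Hence $D_T$ has transverse order $\leq m-1$, and so does $T$.

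The main (and essentially only) obstacle is bookkeeping: one must confirm that the Jacobian density appearing in $(i)$ agrees with the transformation rule that turns $\sigma_m(T)$ into a section of $S^m\mathcal N\otimes|\Lambda|$, so that the apparent chart-dependence of the definition $\sigma_m(T)(x,\xi)=\sigma_m(D_T)(x,\xi)\,dx$ is illusory. Once this intrinsic point is checked, $(ii)$ is a Leibniz computation and $(iii)$ is the identity theorem for homogeneous polynomials on $F^\perp$.
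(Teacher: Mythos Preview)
Your proposal is correct and is precisely the argument the paper sets up: the paper does not give an explicit proof of this proposition (it is stated as a summary of the preceding development, with the remark ``We omit details''), and your derivation of $(i)$ from \eqref{symbdiffeo} and \eqref{transsymbol}, of $(ii)$ from the Leibniz rule, and of $(iii)$ from the vanishing of a homogeneous polynomial on $F^\perp$ is exactly the intended route.
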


\section{The theorem of the support}

Let $\boldsymbol \lambda\in \mathbb C^3$ be a pole and assume that $\boldsymbol \lambda\notin Z$. The distribution $\widetilde {\mathcal K}^{\boldsymbol \lambda}$ is not $0$ (by the very definition of $Z$), and is singular as stated in Proposition \ref{suppK}. The following result is a major step towards proving Theorem \ref{mult1}. 
\begin{theorem}\label{singsupp}
Let $\boldsymbol \lambda$ be a pole and assume $\boldsymbol \lambda\notin Z$. Let $T\neq 0$ be a $\boldsymbol \lambda$-invariant distribution on $S\times S\times S$. Then
\[Supp(T) = Supp(\widetilde {\mathcal K}^{\boldsymbol \lambda})\ .
\]
\end{theorem}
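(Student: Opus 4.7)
My approach is to analyze $\mathrm{Supp}(T)$ using the $G$-orbit stratification of $S\times S\times S$ together with the transverse symbol formalism of Section~5. Since $T$ is $\boldsymbol\lambda$-invariant, $\mathrm{Supp}(T)$ is a closed $G$-invariant subset and therefore a union of orbit closures among $\overline{\mathcal O_0},\ldots,\overline{\mathcal O_4}$. The plan is to determine exactly which of these occur and to match the outcome against Proposition~\ref{suppK}'s description of $\mathrm{Supp}(\widetilde{\mathcal K}^{\boldsymbol\lambda})$, using the Bruhat-style criteria (Lemmas~\ref{invsing1} and~\ref{invsing2}) together with the hypothesis $\boldsymbol\lambda\notin Z$.

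\medskip

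The first step is to show $T|_{\mathcal O_0}=0$. Since $\mathcal O_0$ is a single $G$-orbit, the space of $\boldsymbol\lambda$-invariant distributions on it is at most one-dimensional, spanned by the convergent integral $\mathcal K^{\boldsymbol\lambda}|_{\mathcal O_0}$. If $T|_{\mathcal O_0}$ were nonzero, a rescaling would give $T|_{\mathcal O_0}=\mathcal K^{\boldsymbol\lambda}|_{\mathcal O_0}$, and $T$ would constitute a $\boldsymbol\lambda$-invariant extension to $S\times S\times S$. Comparing with the Laurent expansion of the meromorphic family $\boldsymbol\mu\mapsto\mathcal K^{\boldsymbol\mu}$ around $\boldsymbol\lambda$, one finds that the failure of the finite part to be $\boldsymbol\lambda$-invariant is governed by the residue composed with $\frac{d}{d\boldsymbol\mu}\boldsymbol\pi_{\boldsymbol\mu}(g)|_{\boldsymbol\lambda}$; the hypothesis $\boldsymbol\lambda\notin Z$ (via Theorem~\ref{Z}) guarantees a nonzero residue, making this obstruction nontrivial and precluding such an extension. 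Consequently $\mathrm{Supp}(T)\subseteq\overline{\mathcal O_1}\cup\overline{\mathcal O_2}\cup\overline{\mathcal O_3}$.

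\medskip

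With this upper bound secured, I restrict $T$ to the $G$-invariant open set $V=S\times S\times S\setminus\mathcal O_4$, in which $\mathcal O_1,\mathcal O_2,\mathcal O_3$ are pairwise disjoint closed submanifolds. Using $G$-invariant cut-offs supported in small neighborhoods of each $\mathcal O_j$, decompose $T|_V=T_1+T_2+T_3$ with $T_j$ a $\boldsymbol\lambda$-invariant distribution supported in $\mathcal O_j$. By Lemma~\ref{invsing1}, each nonzero $T_j$ forces $\alpha_j\in-(n-1)-2\mathbb N$; a parallel application of Lemma~\ref{invsing2} to the component of $T$ supported on $\mathcal O_4$ forces the type~II relation in that case. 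Matching the resulting pole constraints against Proposition~\ref{suppK} yields $\mathrm{Supp}(T)\subseteq\mathrm{Supp}(\widetilde{\mathcal K}^{\boldsymbol\lambda})$ in each of the three relevant regimes: pure type I$_j$, pure type II, or mixed I$_j{+}$II.

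\medskip

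For the reverse inclusion, the pure type II case is immediate, as both supports lie in $\mathcal O_4$ and $T\neq 0$ forces equality. In the pure type I$_j$ case, the only $G$-invariant proper closed subset of $\overline{\mathcal O_j}$ is $\mathcal O_4$; a support equal to $\mathcal O_4$ would, by Lemma~\ref{invsing2}, require $\alpha_1+\alpha_2+\alpha_3\in-2(n-1)-2\mathbb N$, contradicting the pure I$_j$ assumption. The main obstacle I expect is the mixed I$_j{+}$II case, where both $T$ and $\widetilde{\mathcal K}^{\boldsymbol\lambda}$ are supported in $\mathcal O_4$ and the issue becomes one of matching the \emph{transverse order} along this closed orbit. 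Here the Section~5 machinery is essential: by Lemma~\ref{invsmooth}, $T$ is smoothly supported on $\mathcal O_4$; its transverse symbol $\sigma_m(T)$ is a $G$-equivariant section of $S^m\mathcal N\otimes|\Lambda|$ on $\mathcal O_4\simeq S$. Using Proposition~\ref{symbol} together with a one-dimensionality statement for the relevant space of $G$-equivariant symbols, I would force $\sigma_m(T)$ to be proportional to $\sigma_m(\widetilde{\mathcal K}^{\boldsymbol\lambda})$, the latter being nonzero precisely because $\boldsymbol\lambda\notin Z$. This yields the equality $\mathrm{Supp}(T)=\mathrm{Supp}(\widetilde{\mathcal K}^{\boldsymbol\lambda})$.
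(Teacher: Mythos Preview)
Your overall architecture matches the paper's: first prove that $\mathcal K_{\boldsymbol\alpha,\mathcal O_0}$ admits no $\boldsymbol\lambda$-invariant extension (Theorem~\ref{noext}), hence $T|_{\mathcal O_0}=0$, and then use Lemmas~\ref{invsing1}, \ref{invsing2} to pin down the support. But two steps are genuinely incomplete.

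\textbf{The no-extension step.} You correctly identify the Laurent mechanism: the finite part $F_1$ (or $F_2$ in type I+II) extends $\mathcal K_{\boldsymbol\alpha,\mathcal O_0}$ but satisfies only $F_1\circ\boldsymbol\pi_{\boldsymbol\lambda}(g)-F_1=-A_gF_0$ with $F_0$ a nonzero multiple of $\widetilde{\mathcal K}^{\boldsymbol\lambda}$. What you have not supplied is why this obstruction cannot be canceled by a singular distribution $S$ supported on the lower strata, i.e.\ why $S\circ\boldsymbol\pi_{\boldsymbol\lambda}(g)-S=-A_gF_0$ has no solution. The paper's argument is not a soft cohomological one: it shows $S$ is smoothly supported (Lemma~\ref{invsmooth}), computes transverse symbols of both sides at the $a_t$-fixed point $(\mathbf 1,\mathbf 1,\pm\mathbf 1)$, and obtains the relation $(e^{t(2k-m)}-1)\,\sigma_m(S)=-t\,\sigma_m(F_0)$, which is impossible for all $t$. ``The obstruction is nontrivial and precludes extension'' skips exactly this.

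\textbf{The mixed I$_j$+II case.} Your step~2 does not give what you claim. Say $j=3$: since $\alpha_3\in-(n-1)-2\mathbb N$, Lemma~\ref{invsing1} does \emph{not} force $T_3=0$; you only get $T_1=T_2=0$ (from $\boldsymbol\lambda\notin Z\Rightarrow\alpha_1,\alpha_2\notin-(n-1)-2\mathbb N$), hence merely $\mathrm{Supp}(T)\subset\overline{\mathcal O_3}$, whereas $\mathrm{Supp}(\widetilde{\mathcal K}^{\boldsymbol\lambda})\subset\mathcal O_4$ by Proposition~\ref{suppK}\,$iv)$. So the inclusion $\mathrm{Supp}(T)\subset\mathrm{Supp}(\widetilde{\mathcal K}^{\boldsymbol\lambda})$ is not established, and your step~3 starts from an unproved premise. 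The paper fills this gap with an extra argument (the final Lemma of Section~6.3): on $\mathcal O_4'$ both $T'$ and the Taylor coefficient $F_1'$ are $\boldsymbol\lambda$-invariant and supported in $\mathcal O_3$, hence proportional by Lemma~\ref{invsing1}; if the constant is nonzero one subtracts $cF_1$ and runs a second transverse-symbol argument on $\mathcal O_4$ against $F_0$, reaching a contradiction and forcing $\mathrm{Supp}(T)\subset\mathcal O_4$. Your ``matching transverse orders'' discussion is misdirected: once both nonzero distributions are supported in the single orbit $\mathcal O_4$, their supports are automatically equal; no transverse-order comparison is needed for the \emph{support} statement. (Minor point: ``$G$-invariant cut-offs'' do not exist since $G$ is noncompact; use instead restriction to the $G$-invariant open sets $\mathcal O_0\cup\mathcal O_j$, which is precisely the setting of Lemma~\ref{invsing1}.)
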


Recall that $\mathcal K_{\boldsymbol \alpha,\, \mathcal O_0}$ is the distribution on $\mathcal O_0$ defined by integration against $\vert x-y\vert^{\alpha_3}\vert y-z\vert^{\alpha_1}\vert z-x\vert^{\alpha_2}$. The distribution $\mathcal K_{\boldsymbol \alpha,\, \mathcal O_0}$ is $\boldsymbol \lambda$-invariant.
The most difficult step towards proving Theorem \ref{singsupp} is the following result.

\begin{theorem}\label{noext}
Let $\boldsymbol \alpha$ be a pole  and assume that $\boldsymbol \alpha\notin Z$. Let $\boldsymbol \lambda$ be the associated spectral parameter. Then the distribution $\mathcal K_{\boldsymbol \alpha,\, \mathcal O_0}$ cannot be extended to $S\times S\times S$ as a $\boldsymbol \lambda$-invariant distribution. 
\end{theorem}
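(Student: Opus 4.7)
The plan is to argue by contradiction: assume such a $\boldsymbol{\lambda}$-invariant extension $T$ of $\mathcal{K}_{\boldsymbol{\alpha}, \mathcal{O}_0}$ exists on $S \times S \times S$ and deduce that $\boldsymbol{\alpha}$ must then lie in $Z$, contrary to the hypothesis. Since the integrand defining $\mathcal{K}_{\boldsymbol{\alpha}}$ is $K$-invariant, so is its restriction $\mathcal{K}_{\boldsymbol{\alpha}, \mathcal{O}_0}$ to the open orbit; averaging $T$ over the compact group $K$ therefore yields another $\boldsymbol{\lambda}$-invariant extension which is moreover $K$-invariant, and we may assume $T$ itself is $K$-invariant from the start.

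The core of the argument compares $T$ with a distinguished but only ``nearly invariant'' extension produced from the Laurent expansion of the meromorphic family. For $\boldsymbol{\alpha}$ in a small neighborhood of $\boldsymbol{\alpha}_0 := \boldsymbol{\alpha}$, write $\mathcal{K}_{\boldsymbol{\alpha}} = \mathcal{K}^{\mathrm{sing}}_{\boldsymbol{\alpha}} + \mathcal{K}^{\mathrm{reg}}_{\boldsymbol{\alpha}}$, where $\mathcal{K}^{\mathrm{sing}}_{\boldsymbol{\alpha}}$ collects the principal parts along the pole planes through $\boldsymbol{\alpha}_0$ (and is supported on the appropriate closed orbits, by the residue computations of \cite{bc}) and $\mathcal{K}^{\mathrm{reg}}_{\boldsymbol{\alpha}}$ is holomorphic at $\boldsymbol{\alpha}_0$. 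Then $T_{\mathrm{reg}} := \mathcal{K}^{\mathrm{reg}}_{\boldsymbol{\alpha}_0}$ itself extends $\mathcal{K}_{\boldsymbol{\alpha}_0, \mathcal{O}_0}$. Differentiating the identity $\mathcal{K}_{\boldsymbol{\alpha}}(\boldsymbol{\pi}_{\boldsymbol{\lambda}(\boldsymbol{\alpha})}(g) f) = \mathcal{K}_{\boldsymbol{\alpha}}(f)$ in $\boldsymbol{\alpha}$ at $\boldsymbol{\alpha}_0$ shows that $T_{\mathrm{reg}}$ satisfies an anomalous invariance identity $\boldsymbol{\pi}_{\boldsymbol{\lambda}_0}(g)^{\ast} T_{\mathrm{reg}} - T_{\mathrm{reg}} = A(g)$, in which the anomaly $A(g)$ is an explicit expression in the residues of $\mathcal{K}$ at $\boldsymbol{\alpha}_0$ paired with the $\boldsymbol{\lambda}$-derivative of the representation. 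Consequently, the difference $S := T - T_{\mathrm{reg}}$ is a distribution supported in $(S \times S \times S) \setminus \mathcal{O}_0 = \overline{\mathcal{O}_1} \cup \overline{\mathcal{O}_2} \cup \overline{\mathcal{O}_3}$ and obeys the coboundary-type equation $\boldsymbol{\pi}_{\boldsymbol{\lambda}_0}(g)^{\ast} S - S = -A(g)$.

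Existence of $T$ is thus converted to the solvability of this coboundary equation among distributions supported on the singular locus, and is handled case-by-case following the organization of subsections 6.1--6.3. In the type $I_j$ case, $A(g)$ is supported on $\overline{\mathcal{O}_j}$; working on $G$-saturated open subsets (such as $S \times S \times S \setminus \overline{\mathcal{O}_k \cup \mathcal{O}_\ell}$ for the other indices $k, \ell$) and invoking Lemma \ref{invsmooth}, the ``invariance implies smoothness'' principle upgrades $S$ to a distribution smoothly supported on $\overline{\mathcal{O}_j}$, so the section~5 symbol calculus converts the coboundary equation into a linear equation on the transverse symbol of $S$, a section of symmetric powers of the conormal bundle of $\mathcal{O}_j$; the homogeneous solutions are classified by Lemma \ref{invsing1}. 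In the type II (and type I+II) case the relevant submanifold is $\mathcal{O}_4$, and Lemma \ref{invsing2} plays the analogous role, with the added complication that corrections on $\overline{\mathcal{O}_j}$ and on $\mathcal{O}_4$ may coexist. The hard part will be executing this transverse-symbol matching explicitly: identifying the principal transverse symbol of the anomaly $A(g)$, expressing the action of $G$ on transverse symbols via \eqref{symbdiffeo}, and verifying that the resulting arithmetic solvability condition on $(\alpha_1, \alpha_2, \alpha_3)$ coincides exactly with the conditions \eqref{ZIalpha}--\eqref{ZIIalpha} that define $Z$ in Theorem \ref{Z}, so that solvability forces $\boldsymbol{\alpha} \in Z$.
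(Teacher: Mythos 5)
Your setup coincides with the paper's: the degree-zero Laurent coefficient along a line through $\boldsymbol\alpha$ (your $T_{\mathrm{reg}}$ is the paper's $F_1$, resp.\ $F_2$ in the type I+II case), the anomalous invariance identity with anomaly $A_g\,F_0$ where $A_g$ is a logarithm of conformal factors and $F_0$ is a nonzero multiple of $\widetilde{\mathcal K}_{\boldsymbol\alpha}$, the coboundary equation for $S=T-T_{\mathrm{reg}}$, the upgrade to smooth support via Lemma \ref{invsmooth}, and the passage to transverse symbols. Up to that point you have reconstructed the argument.

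The gap is in the decisive step, which you defer as ``the hard part'' and whose nature you misidentify. You propose to extract an \emph{arithmetic solvability condition} on $(\alpha_1,\alpha_2,\alpha_3)$ from the symbol equation and to check that it reproduces the defining conditions of $Z$. That is not how the contradiction is obtained, and it would amount to re-deriving Theorem \ref{Z} by a different (and much harder) route. In the paper, the hypothesis $\boldsymbol\alpha\notin Z$ is used \emph{only} to guarantee $F_0\neq 0$ (since $Z$ is by definition the zero set of $\widetilde{\mathcal K}$), together with the fact from \cite{bc} that $F_0$ has transverse order exactly $2k$. The contradiction then comes from a quasi-homogeneity mismatch: evaluating the symbol identity at a point of the singular orbit fixed by the hyperbolic one-parameter group $a_t$ (namely $(\mathbf 1,\mathbf 1,-\mathbf 1)$ or $(\mathbf 1,\mathbf 1,\mathbf 1)$), the left-hand side becomes $\bigl(e^{t(2k-m)}-1\bigr)\sigma_m(S)$ while the right-hand side is $-t\,\sigma_m(F_0)$, because the anomaly is a \emph{logarithm} of the conformal factor and hence linear in $t$. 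No transverse order $m\geq 2k$ is compatible with this: $m>2k$ forces $\sigma_m(S)=0$ (contradicting the definition of $m$), and $m=2k$ forces $\sigma_{2k}(F_0)=0$ on the whole orbit by invariance (contradicting $F_0\neq 0$). In short, a nontrivial logarithmic cocycle is never a coboundary; no arithmetic condition is matched. You also do not address the type I+II case adequately: there two $\Gamma$-factors are singular, the Taylor expansion must be pushed to second order, and one needs the separate Lemma \ref{strongsuppF1} (proved via the explicit Pochhammer formula \eqref{Kpalpha}) that $F_1$ is genuinely supported on all of $\overline{\mathcal O_3}$, i.e.\ $F_1'\neq 0$, before the same symbol argument can be run. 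Finally, your appeal to Lemma \ref{invsing1} to ``classify homogeneous solutions'' plays no role at this stage; it is used only to establish uniqueness of the invariant extension to $\mathcal O_3'$ and in the later multiplicity-one arguments.
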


As the proof is long, let us sketch the main steps.
\smallskip

$\bullet$ The distribution $\mathcal K_{\boldsymbol \alpha,\, \mathcal O_0}$ has a "natural" extension (say $F$) to $S\times S\times S$, given by the degree $0$ coefficient in the Laurent expansion of the meromorphic function $\boldsymbol \alpha \longmapsto \mathcal K_{\boldsymbol \alpha}$ restricted to a (well chosen) complex line in $\mathbb C^3$.

$\bullet$ The distribution $F$ is not $\boldsymbol \lambda$-invariant, but "almost" invariant, in the sense that, for each $g\in G$,   \[F\circ \boldsymbol \pi_{\boldsymbol \lambda}(g)-F= E_g\]
where $E_g$ is a distribution (depending on $g$) which is identically $0$ on $\mathcal O_0$.

$\bullet$ If $T$ is a $\boldsymbol \lambda$-invariant distribution extending
$\mathcal K_{\boldsymbol \alpha,\, \mathcal O_0}$, then $S=F-T$ vanishes identically on $\mathcal O_0$ and  satisfies 
\begin{equation}\label{functS}
S\circ \boldsymbol \pi_{\boldsymbol \lambda}(g)-S= E_g\ .
\end{equation}
\noindent

$\bullet$ Both $S$ and $E_g$ are supported in $\mathcal O_1\cup \mathcal O_2\cup \mathcal O_3\cup \mathcal O_4$. Successively for each orbit $\mathcal O_j$, it is possible to define and compute the transverse symbols of both sides of \eqref{functS}. In $\mathcal O_j$, there is a point which is fixed by all $a_t, t\in \mathbb R$. Evaluate both symbols at  this point. Studying their behavior as $t$ varies eventually leads to a contradiction.
\smallskip

Three separate cases  have to be considered : the case of a generic pole of type I (which automatically is not in $Z$), the case of a generic pole of type II not in $Z$, and the case of a pole of type I+II not in $Z$. The proofs rely on the same ideas, but are formally different. Details are given for the first case, we ask the the friendly reader to accept the more sketchy treatment of the last two cases. 

\subsection{Type I}
 Assume  $\boldsymbol \alpha=(\alpha_1,\alpha_2, \alpha_3)$  is a generic pole (recall Definition \ref{generic}) of type I$_3$, that is $\alpha_3=-(n-1)-2k$ for some $k\in \mathbb N$. The assumption of genericity implies that $\boldsymbol \alpha\notin Z$. It also implies that among the four $\Gamma$ factors in the holomorphic normalization of $K_{\boldsymbol \alpha}$ only one (namely $\Gamma(\frac{1}{2} \alpha_3+\rho)$) becomes singular. Let $s$ be a complex parameter, $s\neq 0$, $\vert s\vert$ small. Let 
\[\boldsymbol \alpha(s) = (\alpha_1,\alpha_2, \alpha_3+2s), \quad \boldsymbol \lambda(s) = (\lambda_1+s, \lambda_2+s, \lambda_3)\]
and let 
\[\mathcal F(s)= \frac{(-1)^k}{k!}\, \Gamma(\frac{\alpha_1}{2} +\rho)\, \Gamma(\frac{\alpha_2}{2} +\rho)\, \Gamma(\frac{\alpha_1+\alpha_2+\alpha_3}{2} +2\rho+s)\, \widetilde {\mathcal K}_{\boldsymbol \alpha(s)}.\] 

This is well defined distribution-valued function, at least for $\vert s\vert$ small. Consider its Taylor expansion at $0$
 \begin{equation}\label{Fs}
\mathcal F(s) =   F_0+ s F_1 +O(s^2) ,
\end{equation}
where $F_0, F_1$ are distributions on $S\times S\times S$.
The observation on the normalization factor implies that the distribution $ F_0$ is a non zero multiple of $\widetilde {\mathcal K}_{\boldsymbol \alpha}$, which, as $\boldsymbol \alpha \notin Z$, is  not equal to $0$. Hence $ F_0\neq 0$. Moreover  $ F_0$ is $\boldsymbol \lambda$-invariant and $Supp( F_0)\subset \overline{\mathcal O_3}$.

\begin{lemma} \ 

$i)$ The restriction of $ F_1$ to $\mathcal O_0$ is equal to $\mathcal K_{\boldsymbol \alpha, \, \mathcal O_0}$.

$ii)$ for any $g\in G$, 
\begin{equation}\label{F1}
 F_1 \circ \boldsymbol \pi_{\boldsymbol \lambda} (g) - F_1 = -(\ln \kappa(g^{-1},x)\kappa(g^{-1},y)) F_0
\end{equation}

$iii)$ for any $X\in \mathfrak p \simeq E$
\begin{equation}\label{F1inf}
F_1\circ d\boldsymbol \pi_{\boldsymbol \lambda}(X)   =-\langle X, x+y\rangle\,  F_0  \ .
\end{equation}

\end{lemma}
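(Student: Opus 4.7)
The plan is to extract $F_0$ and $F_1$ from the Laurent expansion of $\widetilde{\mathcal K}_{\boldsymbol\alpha(s)}$ at $s=0$, and then propagate the $\boldsymbol\lambda(s)$-invariance of the family into identities on the coefficients. Because $\alpha_3 = -(n-1)-2k$, the only singular $\Gamma$-factor in the normalization of $\widetilde{\mathcal K}_{\boldsymbol\alpha(s)}$ at $s=0$ is $\Gamma(\rho + (\alpha_3+2s)/2) = \Gamma(-k+s)$, whose reciprocal has Taylor expansion $1/\Gamma(-k+s) = (-1)^k k!\,s + O(s^2)$. A direct cancellation yields $\mathcal F(s) = s\,\mathcal K_{\boldsymbol\alpha(s)}\bigl(1 + O(s)\bigr)$, and combining with the simple-pole Laurent expansion $\mathcal K_{\boldsymbol\alpha(s)} = s^{-1}A + B_0 + s B_1 + \dots$ (simple because $\boldsymbol\alpha$ is a \emph{generic} pole of type I$_3$) one reads off $F_0 = A$ (a nonzero multiple of $\widetilde{\mathcal K}_{\boldsymbol\alpha}$) and $F_1 = B_0$.

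For $(i)$, restrict test functions to have support in $\mathcal O_0$. There the integral defining $\mathcal K_{\boldsymbol\alpha(s),\mathcal O_0}$ is a holomorphic function of $s$ near $0$, so its Laurent expansion has no polar part. Hence $A|_{\mathcal O_0} = 0$ and the coefficient $B_0|_{\mathcal O_0} = \mathcal K_{\boldsymbol\alpha,\mathcal O_0}$, which is precisely $(i)$.

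For $(ii)$, use the $\boldsymbol\lambda(s)$-invariance $\mathcal F(s)(\boldsymbol\pi_{\boldsymbol\lambda(s)}(g)f) = \mathcal F(s)(f)$. The key identity
\[
\boldsymbol\pi_{\boldsymbol\lambda(s)}(g) = \kappa(g^{-1},x)^s\kappa(g^{-1},y)^s\cdot\boldsymbol\pi_{\boldsymbol\lambda}(g) = \bigl(1 + sL_g + O(s^2)\bigr)\boldsymbol\pi_{\boldsymbol\lambda}(g),
\]
with $L_g(x,y) = \ln[\kappa(g^{-1},x)\kappa(g^{-1},y)]$, lets us expand both sides in $s$. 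Matching the $s^0$-coefficient recovers the $\boldsymbol\lambda$-invariance of $F_0$; matching the $s^1$-coefficient, after using the invariance of $F_0$ to simplify the cross term $F_0(L_g\,\boldsymbol\pi_{\boldsymbol\lambda}(g)f)$, gives the deviation formula of $(ii)$.

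For $(iii)$, the cleanest route is to differentiate the invariance $\mathcal F(s)\circ d\boldsymbol\pi_{\boldsymbol\lambda(s)}(X) = 0$. From Lemma 1.2 and the definition $\boldsymbol\lambda(s) = (\lambda_1+s, \lambda_2+s, \lambda_3)$ one has
\[
d\boldsymbol\pi_{\boldsymbol\lambda(s)}(X) = d\boldsymbol\pi_{\boldsymbol\lambda}(X) + s\,\langle X, x+y\rangle,
\]
so matching the $s^1$-coefficient of the invariance identity yields $F_1\bigl(d\boldsymbol\pi_{\boldsymbol\lambda}(X)f\bigr) + F_0\bigl(\langle X, x+y\rangle f\bigr) = 0$, which is $(iii)$. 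Alternatively this can be read off from $(ii)$ at $g = \exp tX$ by differentiating in $t$ at $t=0$ and invoking $\frac{d}{dt}\kappa(\exp tX,x)|_{t=0} = -\langle X, x\rangle$ (Lemma 1.1). No step presents a real obstacle: everything reduces to matching Taylor coefficients in $s$, the only subtlety being that the pole of $\mathcal K_{\boldsymbol\alpha(s)}$ must be cancelled by the prefactor $1/\Gamma(-k+s)$ \emph{before} expanding, so that all series in sight are genuine Taylor series.
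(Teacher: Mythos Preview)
Your proof is correct and follows essentially the same approach as the paper: expand $\mathcal F(s)$ in powers of $s$, identify the coefficients, and match Taylor coefficients of the invariance relation $\mathcal F(s)\circ\boldsymbol\pi_{\boldsymbol\lambda(s)}(g)=\mathcal F(s)$. One small imprecision: since $\frac{(-1)^k}{k!}\,\Gamma(-k+s)^{-1}=s+c_2 s^2+O(s^3)$ with $c_2$ generally nonzero, the product with $\mathcal K_{\boldsymbol\alpha(s)}=s^{-1}A+B_0+\cdots$ gives $F_1=B_0+c_2 A$, not $F_1=B_0$; this is harmless because $A=F_0$ vanishes on $\mathcal O_0$ and you never use the formula $F_1=B_0$ in parts $(ii)$ and $(iii)$.
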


\begin{proof} 
Let $\varphi$ be a test function supported in $\mathcal O_0$. For $s\neq 0$, replace $\widetilde {\mathcal K}_{\boldsymbol \alpha(s)}$ by its expression in term of $\mathcal K_{\boldsymbol \alpha(s)}$ to get
\[\mathcal F(s) (\varphi) = \frac{(-1)^k}{k!} \frac{1}{\Gamma( -k+s)}\, {\mathcal K}_{\boldsymbol\alpha(s), \mathcal O_0}(\varphi)
\ .\]
Let $s \rightarrow 0$ to get  $F_0(\varphi) = \mathcal K_{\boldsymbol \alpha, \mathcal O_0}(\varphi)$ and $i)$ follows.

Next, for $f\in \mathcal C^\infty(S)$ and arbitrary $\lambda\in \mathbb C$,
\[\big(\pi_{\lambda+s} (g)f\big)(x) = \kappa(g^{-1},x)^{s} \big(\pi_{\lambda}(g) f\big)(x)
\]
and hence for $f\in \mathcal C^\infty(S\times S\times S)$
\[\big(\boldsymbol \pi_{\boldsymbol \lambda(s)}(g)f \big)(x,y,z) = 
\kappa(g^{-1},x)^s\, \kappa(g^{-1},y)^{s} \big(\boldsymbol \pi_{\boldsymbol \lambda}(g)f\big) (x,y,z)
\]
\[= \big(1+s \ln \big(\kappa(g^{-1},x) \kappa(g^{-1},y)\big)+O(s^2)\big)\,\big(\pi_{\boldsymbol \lambda}(g)f\big) (x,y,z)\ .
\]
Recall that $\mathcal F(s)\circ \boldsymbol \pi_{\boldsymbol \lambda(s)}(g) = \mathcal F(s)$. Compare the Taylor expansion of both sides to obtain
\begin{equation} \big(F_1 + \ln\big(\kappa(g^{-1},x) \,\kappa(g^{-1},y)\big)  F_0\big)\circ \boldsymbol \pi_{\boldsymbol \lambda}(g)  = F_1 \ ,
\end{equation}
and use  the $\boldsymbol \lambda$-invariance of $F_0$  to get $ii)$. Notice that, as the conformal factor of an element in $K$ (a rotation) is identically equal to $1$, $ii)$ implies that $ F_0$ is $K$-invariant.

For $iii)$,  let $X\in \mathfrak p$. Then $d\boldsymbol \pi_{\boldsymbol \lambda}(x)F_0=0$ by the $\boldsymbol \lambda$-invariance of $F_0$, and using \eqref{derkappa}
\[\ln \big(\kappa(\exp-tX,x)\kappa(\exp-tX,y)\big) =\ln \big(1+t(\langle X,x\rangle +\langle X,y\rangle)+O(t^2)\big)
\]
\[ = t\langle X,x+y\rangle +O(t^2)\ .
\]
Let $g=a_t$ in \eqref{F1} and take the derivative of both sides at $t=0$ to get 
\[ F_1\circ d\boldsymbol \pi_{\boldsymbol \lambda} (X)  = -\langle X,x+y\rangle\,F_0\ .
\]

\end{proof}

 Now, by assumption $\alpha_1,\alpha_2\notin -(n-1)-2\mathbb N$. Hence, on \[\mathcal O_3' = S\times S\times S\setminus\overline{\mathcal O_3} = \{ (x,y,z)\in S\times S\times S, x\neq y\}
\] 
it is possible to extend the distribution $\mathcal K_{\boldsymbol \alpha, \,\mathcal O}$ by meromorphic continuation, to get a distribution on $\mathcal O_3'$ (denoted by $\mathcal K_{\boldsymbol \alpha,\,\mathcal O_3'}$) which is $\boldsymbol \lambda$-invariant. Moreover, it is the only extension to $\mathcal O_3'$ which is $\boldsymbol \lambda$-invariant. In fact, if there is one such, say $ T'$, then the difference $ T'-{\mathcal K}_{\boldsymbol \alpha,\mathcal O_3'}$ is supported in $\mathcal O_3'\setminus \mathcal O_0=\mathcal O_1\cup \mathcal O_2$ and $\boldsymbol \lambda$-invariant. Now use twice Lemma \ref{invsing1} to conclude that this distribution has to be $0$ : once for $\mathcal O = \mathcal O_0\cup \mathcal O_1$ containing $\mathcal O_1$, the second for $\mathcal O = \mathcal O_0\cup \mathcal O_2$containing $\mathcal O_2$. On the other hand, the restriction to $\mathcal O_3'$ of $ F_0$  is $0$, and hence, by  \eqref{F1},  the restriction of $ F_1$ to $\mathcal O_3'$ is $\boldsymbol \lambda$-invariant. Hence $F_1$ coincides on $\mathcal O'_3$ with $\mathcal K_{\boldsymbol \alpha,\,\mathcal O_3'}$.
We now are in position to prove the following result which implies {\it a fortiori} Theorem \ref{noextI} for the case of a generic pole of type I.

\begin{proposition}\label{noextI}
There is no  $\boldsymbol \lambda$-invariant distribution on $\mathcal O_4' = S\times S\times S\setminus \mathcal O_4$ which extends $\mathcal K_{\boldsymbol \alpha,\mathcal O'_3}$. 
\end{proposition}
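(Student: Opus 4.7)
The plan is to assume, for contradiction, that a $\boldsymbol\lambda$-invariant distribution $T$ on $\mathcal O_4'$ extending $\mathcal K_{\boldsymbol\alpha,\mathcal O_3'}$ exists, and to set $S = F_1 - T$. Both $F_1$ and $T$ restrict to $\mathcal K_{\boldsymbol\alpha,\mathcal O_3'}$ on $\mathcal O_3'$ (the former by the uniqueness argument given just before the proposition, the latter by hypothesis), so $\mathrm{Supp}(S)\subset \mathcal O_3$ as a distribution on $\mathcal O_4'$. Subtracting the $\boldsymbol\lambda$-invariance of $T$ from \eqref{F1} and \eqref{F1inf} yields
\[
S\circ \boldsymbol\pi_{\boldsymbol\lambda}(g) - S = -\ln\!\bigl(\kappa(g^{-1},x)\kappa(g^{-1},y)\bigr)\,F_0,\qquad S\circ d\boldsymbol\pi_{\boldsymbol\lambda}(X) = -\langle X,x+y\rangle\,F_0,
\]
for all $g\in G$ and $X\in \mathfrak p$.

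First I would apply Lemma~\ref{invsmooth} to $S$. The operators $d\boldsymbol\pi_{\boldsymbol\lambda}(X)$, $X\in\mathfrak g$, have vector-field parts tangent to $\mathcal O_3$ (since $G$ preserves the orbit) and spanning $T_p\mathcal O_3$ at every $p\in \mathcal O_3$ (by transitivity of $G$ on $\mathcal O_3$). The right-hand sides above are smoothly supported on $\mathcal O_3$ because $F_0$ itself is: on $\mathcal O_4'$ its support is contained in $\mathcal O_3$ by Proposition~\ref{suppK}, and the residue formulas recalled in \cite{bc} represent it there by a transverse covariant differential operator. Lemma~\ref{invsmooth} then forces $S$ to be smoothly supported on $\mathcal O_3$, so it has a well-defined transverse symbol.

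Next I would pass to transverse symbols at the $a_t$-fixed point $p_0=(\mathbf 1,\mathbf 1,-\mathbf 1)\in \mathcal O_3$. Using $Da_t(\mathbf 1)=e^{-t}\mathrm{Id}$, $Da_t(-\mathbf 1)=e^{t}\mathrm{Id}$ and $\kappa(a_t,\pm \mathbf 1)=e^{\mp t}$, together with Proposition~\ref{symbol}$(i)$ and the $\boldsymbol\lambda$-invariance of $F_0$, the top transverse symbol $\sigma_m(F_0)(p_0,\cdot)$ is an eigenvector of the $a_t$-action with some explicit eigenvalue $e^{\mu t}$. Applying $a_{t,*}$ to the group-level equation for $S$ at $p_0$ (where $\ln\!\bigl(\kappa(a_{-t},\mathbf 1)\kappa(a_{-t},\mathbf 1)\bigr)=2t$) then gives, at the top transverse order,
\[
(e^{\mu t}-1)\,\sigma_{m'}(S)(p_0,\cdot) = -2t\,\sigma_m(F_0)(p_0,\cdot),
\]
modulo lower transverse order. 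Matching Taylor coefficients in $t$: if $\mu\neq 0$, the $t^{2}$-coefficient of the left-hand side is $\tfrac12 \mu^2 \sigma_{m'}(S)$, forcing $\sigma_{m'}(S)=0$ and hence $\sigma_m(F_0)=0$; if $\mu=0$, the left-hand side vanishes identically in $t$ while the right-hand side is a nonzero linear polynomial, again forcing $\sigma_m(F_0)=0$. Either alternative contradicts the fact that $F_0\neq 0$ has a nowhere-vanishing top symbol on $\mathcal O_3$ (by $G$-transitivity of the orbit).

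The technical heart of the argument is the precise determination of the eigenvalue $\mu$: one has to combine the Jacobian $|\det(Da_{-t}|_{T_{p_0}\mathcal O_3})|=1$ (the $e^{-(n-1)t}$ factor from the diagonal and the $e^{(n-1)t}$ factor from the third coordinate cancel), the $e^{-t}$-scaling of conormal covectors, the transverse order $m$, and the $\kappa$-factor contributions from $\boldsymbol\pi_{\boldsymbol\lambda}(a_t)$; and then verify that the \emph{same} $\mu$ controls both $\sigma(F_0)$ and $\sigma(S)$. Once this bookkeeping is done, the Jordan-block forced on $\sigma(S)$ by the degree-one log correction is simply incompatible with a transverse symbol (which is polynomial, hence analytic in $t$), yielding the sought contradiction and excluding the existence of $T$.
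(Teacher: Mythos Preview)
Your approach is the paper's approach: set $S=T-F_1'$, use Lemma~\ref{invsmooth} to get $S$ smoothly supported on $\mathcal O_3$, then compare transverse symbols at the $a_t$-fixed point $(\mathbf 1,\mathbf 1,-\mathbf 1)$ and extract a contradiction from the incompatibility of the exponential scaling on the left with the linear-in-$t$ right-hand side. All of the ingredients you invoke (the infinitesimal relations, the residue formula giving $F_0$ transverse order $2k$, Proposition~\ref{symbol}) are exactly those the paper uses.

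The one point that needs tightening is your handling of the exponent $\mu$. The scaling factor on the order-$p$ symbol at $p_0$ is $e^{(p-2k)t}$ (this comes out of the bookkeeping you list in your last paragraph, using $\lambda_1+\lambda_2-\lambda_3+\rho=-2k$). In particular it depends on the order at which you take the symbol, so there is no single $\mu$ that ``controls both $\sigma(F_0)$ and $\sigma(S)$'' unless their orders coincide. The paper does not try to make the two agree; instead it takes symbols at the top order $m$ of $S$, notes that the equation forces $m\ge 2k$, and then splits cleanly: if $m>2k$ the right-hand side is $\sigma_m(F_0)=0$ and the nontrivial exponential on the left kills $\sigma_m(S)$, contradicting the definition of $m$; if $m=2k$ the left-hand side vanishes and one gets $\sigma_{2k}(F_0)(p_0,\cdot)=0$, which propagates to all of $\mathcal O_3$ by invariance and contradicts the fact that $F_0$ has transverse order exactly $2k$. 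Your Taylor-matching argument is a perfectly good way to phrase this same dichotomy, but as written your displayed identity mixes $\sigma_{m'}(S)$ with $\sigma_m(F_0)$, and your conclusion in the $\mu\neq 0$ branch (``hence $\sigma_m(F_0)=0$'') is not how the contradiction actually arises there. Once you track the order consistently, your argument and the paper's coincide.
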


\begin{proof}
For convenience, when $F$ is a distribution on $S\times S\times S$, denote by $F'$ its restriction to $\mathcal O_4'$. Assume $T$ is a distribution on $\mathcal O'_4$ which extends $\mathcal K_{\boldsymbol \alpha,\,\mathcal O'_3}$ and is $\boldsymbol \lambda$-invariant.   Let $ S = T- F_1'$. Then $S$ is supported in $\mathcal O_3$, which is a closed regular submanifold of $\mathcal O_4'$. Let $\mathfrak g$ be the Lie algebra of $G$, and for $X\in \mathfrak g$, let $\widetilde X$ be the vector field on $\mathcal O_4'$ induced by the one-parameter subgroup $\exp tX, t\in \mathbb R$. Recall that $G$ acts transitively on $\mathcal O_3$, so that for any $m\in \mathcal  O_3$  the vector space generated by $\{ \widetilde X(m), X\in \mathfrak g\}$ is equal to $T_m\mathcal O_3$, the tangent space  to $\mathcal O_3$ at $m$.

As both $T$ and $F_1'$ are $K$-invariant, 
\begin{equation}\label{KinvS}
 \forall X\in \mathfrak k,\hskip 2cm S\circ d\boldsymbol \pi_{\boldsymbol \lambda} (X)=0\ .
 \end{equation}
Now for $X\in \mathfrak p$,
\begin{equation*}
d\boldsymbol \pi_{\boldsymbol \lambda} (X)(x,y,z)  = -\widetilde X(x,y,z) + (\lambda_1+\rho)\langle X,x\rangle +(\lambda_2+\rho)\langle X,y\rangle+(\lambda_3+\rho)\langle X,z\rangle
\end{equation*}
is a first order differential operator of the form $-\widetilde X+a_X$ with $a_X$ a smooth function on $S\times S\times S$. Moreover
$ T\circ  d\boldsymbol \pi_{\boldsymbol \lambda} (X) = 0$
as $ T$ is assumed to be $\boldsymbol \lambda$-invariant. Restrict \eqref{F1inf} to $\mathcal O_4'$ to get
$F'_1 \circ d\boldsymbol \pi_{\boldsymbol \lambda} (X)= - \langle X, x+y\rangle F'_0$. Hence
\begin{equation}\label{pinvS}
\forall X\in \mathfrak p,\hskip 2cm S\circ d\boldsymbol \pi_{\boldsymbol \lambda}(X) = \langle X,x+y\rangle F_0'\ .
\end{equation}
 As $F_0'$ is invariant under $\boldsymbol \pi_{\boldsymbol \lambda}$, $F_0'$ is smoothly supported in $\mathcal O_3$. Take into account \eqref{KinvS} and \eqref{pinvS}, use Lemma \ref{invsmooth} to conclude that $S$ is smoothly supported in $\mathcal O_3$. 

For any $g\in G$,  \eqref{F1} and the $\boldsymbol \lambda$-invariance of $T$ imply
\begin{equation}\label{quasiS}
 S\circ \boldsymbol \pi_{\boldsymbol \lambda}(g)- S = -\ln \big(\kappa(g^{-1},x) \kappa(g^{-1},y)\big)  F'_0\ .
\end{equation}
The next elementary result is needed in order to make  connection with the notation and results of section \ref{smoothsupp}.

\begin{lemma} Let $F$ be a distribution on a $G$-invariant open set in $S\times S\times S$. For any $g\in G$,
\begin{equation}\label{symbF}
 F \circ \boldsymbol \pi_{\boldsymbol \lambda}(g) = a_g\, \big(g^{-1}\big)_* \,F
\end{equation}
where $a_g$ is given by
\[a_g(x,y,z) =\kappa(g,x)^{-(\lambda_1+\rho)}  \kappa(g,y)^{-(\lambda_2+\rho)}  \kappa(g,z)^{-(\lambda_3+\rho)} \ .
\]
\end{lemma}
\begin{proof}
Let $\varphi\in \mathcal C^\infty(S)$. As $ \kappa(g^{-1},x) = \kappa(g,g^{-1}x)^{-1}$,  
\[\boldsymbol \pi_ \lambda(g)\varphi(x) = \kappa\big(g^{-1}, x)^{\lambda+\rho} \varphi(g^{-1}(x)\big) = \big(\kappa(g, .)^{-1}\varphi\big)(g^{-1}(x))\ .\]
Hence for $f\in \mathcal C^\infty(S\times S\times S)$,
\[\boldsymbol \pi_{\boldsymbol \lambda}(g) f(x) = (a_g\,f)(g^{-1}(x))\]
 which amounts to $\boldsymbol \pi_{\boldsymbol \lambda}(g)f =  (g^{-1})^* (a_gf)$. Hence
 \[\big(F\circ \boldsymbol \pi_{\boldsymbol \lambda}(g)\big) f= F(\boldsymbol \pi_{\boldsymbol \lambda}(g)f) = F((g^{-1})^* (a_gf))= (a_g (g^{-1})_*F)(f)\ .
 \]
\end{proof}

From the computation of the residue of $\mathcal K^{\boldsymbol \lambda}$ at a generic pole of type I (see \cite{bc} Theorem 2.2),  it is known that $ F_0$ has a (global) transverse order along $\mathcal O_3$ equal to $2k$. From \eqref{quasiS} and \eqref{symbF} follows that $ S$ also has a global transversal order along $\mathcal O_3$, say $m$, and $m\geq 2k$.

 At a point $p=(x,x,z)\in \mathcal O_3$ with $x\neq z$, the tangent space to $\mathcal O_3$  is 
\[T_p \mathcal O_3=\{  (u,u, v), u\in T_xS, v \in T_zS, \}\ ,
\]
 the conormal space (as a subspace of the cotangent space) is given by
\[N_p =\{ n_\xi= (\xi, -\xi, 0), \xi\in T_x^*S\}
\]
Let $g\in G$  such that  $g(p)=p$. The Jacobian of the differential $Dg(p)$ on $T_p\mathcal O_3$ is equal to $\kappa(g,x)^{n-1}\kappa(g,z)^{n-1}$. So, using the transformation rules  \eqref{symbdiffeo} and \eqref{transsymbol} and Lemma \ref{symbol}, \eqref{symbF} leads to

\[\sigma_m( S\circ \boldsymbol \pi_{\boldsymbol \lambda}(g))\big((x,x,z),n_\xi\big) \] \[=\kappa(g,x)^{-(\lambda_1+\lambda_2)} \kappa(g,z)^{-\lambda_3+\rho}
\sigma_m(S)\big((x,x,z), n_{\xi\circ Dg(x)^{-1}}\big)\ .
\]

Apply this to  $x=\mathbf 1, z=-\mathbf 1$, and $g=a_t$. Then $a_t(\mathbf1)=\mathbf 1, a_t(-\mathbf 1) = -\bf 1$, $Da_t(\mathbf1) = e^{-t} \Id, Da_t(-\mathbf 1) = e^t \Id$. Recall that $\lambda_1+\lambda_2-\lambda_3 +\rho=-2k$. So, after some computation, \eqref{quasiS} implies
\begin{equation}\label{quasihom}
\big(e^{t(2k-m)} -1\big) \sigma_m( S)\big((\mathbf 1,\mathbf1,-\mathbf1), n_\xi \big) =
 -t \,\sigma_m(F_0)\big((\mathbf 1,\mathbf 1,-\mathbf 1), n_\xi\big)
\end{equation}
If $m>2k$, the right hand side is $0$, which forces $\sigma_m( S)\big(\mathbf 1,\mathbf1,-\mathbf1) =0$ by and hence $\sigma_m(S) =0$ by \eqref{quasiS}, and hence $S$ is of transverse order $\leq m-1$, a contradiction. So the only possibility is  $m=2k$. But then, the left hand side is $0$. So $\sigma_{2k}(F_0, (\mathbf 1,\mathbf1,-\mathbf1))=0 $ and be the invariance of $F_0$, $\sigma_{2k} (F_0) =0$ on all of $\mathcal O_3$. Hence \eqref{quasihom} leads to a contradiction, thus proving Proposition \ref{noextI}.
\end{proof}

The proof of Theorem \ref{singsupp} in the case of a generic pole of type I is now easy. Let $ T$ be a distribution on $S\times S\times S$ which is $\boldsymbol \lambda$-invariant. Consider its restriction $T_{\mathcal O_0}$ to $\mathcal O_0$. As $\mathcal O_0$ is a unique orbit under $G$,  $\mathcal T_{\mathcal O_0}$ has to be a multiple of $\mathcal K_{\boldsymbol \alpha,\, \mathcal O_0}$. By Proposition \ref{noextI}, $ T_{\mathcal O_0}$ has to be $0$. Hence $ T$ is supported in $\mathcal O_1\cup \mathcal O_2\cup\mathcal O_3\cup \mathcal O_4$. As $\alpha_1,\alpha_2\notin -(n-1)-2\mathbb N$ by assumption, two applications of Lemma \ref{invsing1} show that $T$ is supported in $\overline{\mathcal O_3}$, which is the statement to be proved.

\subsection{Type II}

\begin{proposition} Let $\boldsymbol \alpha$ be a generic pole of II, and assume that $\boldsymbol \alpha$ is not in $Z$. Let $\boldsymbol \lambda$ be the associated spectral parameter. Then the distribution $ \mathcal K_{ \boldsymbol \alpha,\, \mathcal O_0}$ cannot be extended to a $\boldsymbol \lambda$-invariant distribution on $S\times S\times S$.
\end{proposition}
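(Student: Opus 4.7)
The plan is to adapt step by step the template of the type~I proof of Proposition~\ref{noextI}, replacing the intermediate orbit $\mathcal O_3$ by the diagonal $\mathcal O_4$ on which $\widetilde{\mathcal K}_{\boldsymbol\alpha}$ is supported when $\boldsymbol\alpha$ is a generic pole of type~II. Writing $\alpha_1+\alpha_2+\alpha_3=-2(n-1)-2k$ with $k\in\mathbb N$, I would introduce
\[
\boldsymbol\alpha(s) = (\alpha_1,\alpha_2,\alpha_3+2s),\qquad \boldsymbol\lambda(s)=(\lambda_1+s,\lambda_2+s,\lambda_3),
\]
\[
\mathcal F(s) = \frac{(-1)^k}{k!}\,\Gamma\bigl(\tfrac{\alpha_1}{2}+\rho\bigr)\Gamma\bigl(\tfrac{\alpha_2}{2}+\rho\bigr)\Gamma\bigl(\tfrac{\alpha_3+2s}{2}+\rho\bigr)\widetilde{\mathcal K}_{\boldsymbol\alpha(s)} = \frac{(-1)^k}{k!}\,\frac{\mathcal K_{\boldsymbol\alpha(s)}}{\Gamma\bigl(\frac{\alpha_1+\alpha_2+\alpha_3+2s}{2}+2\rho\bigr)}.
\]
Genericity of the type~II pole ensures that only the last $\Gamma$-factor is singular at $s=0$, so $\mathcal F(s)$ is regular there. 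Expand $\mathcal F(s)=F_0+sF_1+O(s^2)$: exactly as in section~6.1, $F_0$ is a non-zero scalar multiple of $\widetilde{\mathcal K}_{\boldsymbol\alpha}$ (non-zero since $\boldsymbol\alpha\notin Z$), is $\boldsymbol\lambda$-invariant and supported in $\mathcal O_4$, whereas $F_1|_{\mathcal O_0}=\mathcal K_{\boldsymbol\alpha,\mathcal O_0}$. Because the same coordinate $\alpha_3$ is being varied as in section~6.1, the quasi-invariance identities
\[
F_1\circ\boldsymbol\pi_{\boldsymbol\lambda}(g)-F_1 = -\ln\bigl(\kappa(g^{-1},x)\kappa(g^{-1},y)\bigr)F_0, \qquad F_1\circ d\boldsymbol\pi_{\boldsymbol\lambda}(X) = -\langle X,x+y\rangle F_0
\]
carry over verbatim, and in particular $F_1$ is $K$-invariant.

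Assume for contradiction that a $\boldsymbol\lambda$-invariant distribution $T$ on $S\times S\times S$ extends $\mathcal K_{\boldsymbol\alpha,\mathcal O_0}$, and set $S=T-F_1$. On the open set $\mathcal O_4'=S\times S\times S\setminus\mathcal O_4$, $F_0$ vanishes, so $F_1$ (hence $S$) is $\boldsymbol\lambda$-invariant there; combined with $S|_{\mathcal O_0}=0$ this forces $Supp(S|_{\mathcal O_4'})\subset\mathcal O_1\cup\mathcal O_2\cup\mathcal O_3$. Since $\boldsymbol\alpha$ is a generic pole of type~II, $\alpha_j\notin -(n-1)-2\mathbb N$ for each $j\in\{1,2,3\}$, so three applications of Lemma~\ref{invsing1} on the $G$-invariant open sets $\mathcal O_0\cup\mathcal O_j$ (each containing the corresponding $\mathcal O_j$ as a closed submanifold) force $S|_{\mathcal O_4'}=0$; hence $Supp(S)\subset\mathcal O_4$. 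The infinitesimal quasi-invariance then gives $S\circ d\boldsymbol\pi_{\boldsymbol\lambda}(X)=0$ for $X\in\mathfrak k$ and $S\circ d\boldsymbol\pi_{\boldsymbol\lambda}(X)=\langle X,x+y\rangle F_0$ for $X\in\mathfrak p$. Because $G$ acts transitively on $\mathcal O_4$, the vector fields $\widetilde X$, $X\in\mathfrak g$, span $T_p\mathcal O_4$ at every $p\in\mathcal O_4$, and Lemma~\ref{invsmooth} ensures that $S$ is smoothly supported on $\mathcal O_4$.

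The contradiction comes by comparing transverse symbols at $p_0=(\mathbf 1,\mathbf 1,\mathbf 1)\in\mathcal O_4$, a point fixed by every $a_t$. The spectral identity at play, replacing the type~I relation $\lambda_1+\lambda_2-\lambda_3+\rho=-2k$, is now
\[
\lambda_1+\lambda_2+\lambda_3+\rho = -2k.
\]
Let $m$ be the transverse order of $S$ along $\mathcal O_4$ and $m_0$ that of $F_0$; the residue computation at a generic type~II pole in \cite{bc} gives $m_0=2k$, and the same reasoning as in section~6.1 gives $m\geq 2k$. At $p_0$ one has $Da_t(p_0)=e^{-t}\Id$ on each of the three factors $T_{\mathbf 1}S$, so the tangent Jacobian on $T_{p_0}\mathcal O_4$ is $e^{-t(n-1)}$, and the character reads $a_{a_t}(p_0)=e^{t(\lambda_1+\lambda_2+\lambda_3+3\rho)}$. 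Combining these via Proposition~\ref{symbol}(i) and \eqref{symbF}, and using that $\ln\bigl(\kappa(a_{-t},x)\kappa(a_{-t},y)\bigr)$ equals $2t$ at $p_0$, the identity $S\circ\boldsymbol\pi_{\boldsymbol\lambda}(a_t)-S=\ln\bigl(\kappa(a_{-t},x)\kappa(a_{-t},y)\bigr)F_0$ yields
\[
\bigl(e^{t(m-2k)}-1\bigr)\sigma_m(S)(p_0,n_\xi) = 2t\,\sigma_m(F_0)(p_0,n_\xi).
\]
A case analysis on $m$ versus $2k$ then closes the argument exactly as in section~6.1: if $m>2k$, the right-hand side vanishes while $e^{t(m-2k)}\neq 1$ for $t\neq 0$, forcing $\sigma_m(S)(p_0,\cdot)=0$, and the $K$-invariance of $\sigma_m(S)$ together with the $K$-transitivity on $\mathcal O_4\simeq S$ force $\sigma_m(S)\equiv 0$, contradicting the choice of $m$; if $m=2k$, the left-hand side is identically zero, so $2t\,\sigma_{2k}(F_0)(p_0,\cdot)=0$ for every $t$, whence $\sigma_{2k}(F_0)(p_0,\cdot)=0$ and, by the same invariance argument, $\sigma_{2k}(F_0)\equiv 0$, contradicting $m_0=2k$.

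The hardest part will be the careful bookkeeping of the transverse symbol formula at $p_0$: the conormal of $\mathcal O_4$ has rank $2(n-1)$, versus rank $n-1$ for $\mathcal O_3$ in section~6.1, so the cancellation between the tangent Jacobian $e^{-t(n-1)}$, the character $e^{t(\lambda_1+\lambda_2+\lambda_3+3\rho)}$ and the covariant scaling $e^{tm}$ of the symbol in the conormal variable must combine cleanly to the exponent $e^{t(m-2k)}$; this, together with sharpening $m_0\leq 2k$ to the exact equality $m_0=2k$ using the explicit residue formulas of \cite{bc}, are the two delicate ingredients.
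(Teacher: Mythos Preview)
Your proof is correct and follows the same overall scheme as the paper's: a one-parameter deformation of $\boldsymbol\alpha$ producing a Taylor expansion $F_0+sF_1+O(s^2)$, the quasi-invariance identity for $F_1$, reduction of a putative invariant extension $T$ to a distribution $S=T-F_1$ smoothly supported on $\mathcal O_4$, and a transverse-symbol contradiction at $(\mathbf 1,\mathbf 1,\mathbf 1)$ using $\lambda_1+\lambda_2+\lambda_3+\rho=-2k$.

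The one genuine difference is the choice of deformation. The paper perturbs symmetrically, $\boldsymbol\alpha(s)=(\alpha_1+\tfrac{2}{3}s,\alpha_2+\tfrac{2}{3}s,\alpha_3+\tfrac{2}{3}s)$, which shifts all three spectral parameters and produces $A_g=\tfrac{2}{3}\ln\bigl(\kappa(g^{-1},x)\kappa(g^{-1},y)\kappa(g^{-1},z)\bigr)$; you perturb only $\alpha_3$, yielding $A_g=\ln\bigl(\kappa(g^{-1},x)\kappa(g^{-1},y)\bigr)$ exactly as in Section~6.1. Both are legitimate because genericity of the type~II pole guarantees $\alpha_j\notin -(n-1)-2\mathbb N$ for each $j$, so no new $\Gamma$-singularity appears along either line and $F_0$ is a non-zero multiple of $\widetilde{\mathcal K}_{\boldsymbol\alpha}$ in either case. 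At the fixed point $(\mathbf 1,\mathbf 1,\mathbf 1)$ the two choices give $A_{a_t}=2t$ (yours) versus $A_{a_t}=2t$ as well (the paper's, since $\tfrac{2}{3}\cdot 3t=2t$), so the resulting symbol identity $(e^{t(m-2k)}-1)\sigma_m(S)=c\,t\,\sigma_m(F_0)$ is the same up to an irrelevant constant $c\neq 0$. Your route has the practical advantage of reusing the formulas of Section~6.1 verbatim; the paper's symmetric route makes the role of the three variables manifestly equal but otherwise buys nothing extra here.
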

\begin{proof} Let $\boldsymbol \alpha =(\alpha_1,\alpha_2,\alpha_3)$ be a generic pole of type II, i.e.  $\alpha_1+\alpha_2+\alpha_3 = -2(n-1)-2k$ for some $k\in \mathbb N$, which moreover is not in $Z$. 

The distribution ${\mathcal K}_{\boldsymbol \alpha, \mathcal O_0}$ can be extended in a unique way to a $\boldsymbol \lambda$-invariant distribution on $\mathcal O'_4$. The proof is similar to the argument given in the proof of Proposition \ref{noextI}, as by assumption,  $\alpha_j\notin -(n-1)-2\mathbb N$ for $j=1,2$ or $3$. We skip details. Denote by  $\mathcal K_{\boldsymbol \alpha, \mathcal O_4'}$ the extension.

Now, again by assumption, among the $\Gamma$-factors involved in the normalization of $\mathcal K_{\alpha}$ only one is singular at $\boldsymbol \alpha$, namely $\Gamma(\frac{\alpha_1+\alpha_2+\alpha_3}{2}+2\rho)$.
For $s$ a complex parameter, $s\neq 0, \vert s\vert$ small, define
\[\boldsymbol \alpha(s) = (\alpha_1+\frac{2}{3}s, \alpha_2+\frac{2}{3}s,\alpha_3+\frac{2}{3}s),\  \boldsymbol \lambda(s)= (\lambda_1+\frac{2}{3}s, \lambda_2+\frac{2}{3}s, \lambda_3+\frac{2}{3}s)\  .
\]
Notice  that $\alpha_1(s)+\alpha_2(s)+\alpha_3(s) = \alpha_1+\alpha_2+\alpha_3+2s$.
For $s\in \mathbb C$, $s\neq 0$ and $\vert s\vert$ small, let
 \[
\mathcal F(s)= \frac{ (-1)^k}{ k!}\, \Gamma\Big(\frac{\alpha_1(s)}{2}+\rho\Big)\,\Gamma\Big(\frac{\alpha_2(s)}{2}+\rho\Big)\,\Gamma\Big(\frac{\alpha_3(s)}{2}+\rho\Big)\,\mathcal K_{\boldsymbol \alpha(s)}
\]

 The Taylor expansion of  $\mathcal F(s)$ at $s=0$ reads  
 \[\mathcal F(s) =   F_0 +s F_1 + O(s)
\]
 where $ F_0,  F_1$ are distributions on $S\times S\times S$.

The distribution $ F_0$ is a (non zero) multiple of $\widetilde  {\mathcal K}_{\boldsymbol \alpha}$, which is not $0$ as $\boldsymbol \alpha\notin Z$. Hence $ F_0$ is a (non zero) $\boldsymbol \lambda$-invariant distribution supported in $\mathcal O_4$.

\begin{proposition} The distribution $ F_1$ satisfies the following properties :

$i)$ on $\mathcal O_0$, $ F_1$ coincides with $\mathcal K_{\boldsymbol \alpha,\, \mathcal O_0}$.

$ii)$ for any $g\in G$,
\begin{equation}
 F_1 \circ \boldsymbol \pi_{\boldsymbol \lambda} (g) - F_1 = -\frac{2}{3} \ln \big(\kappa(g^{-1},x)\kappa(g^{-1}, y) \kappa(g^{-1},z)\big) F_0
\end{equation}
\end{proposition}

\begin{proof} For $s\neq 0$, and $\varphi\in \mathcal C^\infty(S\times S\times S) $ supported in $\mathcal O_0$, 
 \[\mathcal F_(s) (\varphi) = \frac{ (-1)^k}{ k!}\frac{1}{ \Gamma(-k+s)}\,\mathcal K_{\boldsymbol \alpha(s), \mathcal O_0}(\varphi)\]
Let $s\rightarrow 0$ to get $i)$. The proof of $ii)$ is similar to the proof of  \eqref{F1}.
\end{proof}
Assume now there exists   a distribution $ T$  on $S\times S\times S$, which extends $\mathcal K_{\boldsymbol \alpha, \mathcal O_0}$ and is $\boldsymbol \lambda$-invariant. As $Supp( F_0) \subset \mathcal O_4$, the restriction to $\mathcal O_4'$ of  $ F_0$ and $\mathcal K_{\boldsymbol \alpha, \mathcal O_4'}$ coincide. Let $ S=  T- F_0$. Then $ S$ is smoothly supported in $\mathcal O_4$ and satisfies
\begin{equation}\label{quasiS2}
 S\circ  \boldsymbol \pi_{\boldsymbol \lambda} (g)- S= -\frac{1}{2}(\ln \kappa(g^{-1},x)\kappa(g^{-1}, y) \kappa(g^{-1},z)\, F_0
\end{equation}
Let $(x,x,x)\in \mathcal O_4$. Then $T_{(x,x,x)}\mathcal O_4 = \{(u,u,u), u\in T_xS\}$ so that 
\begin{equation*}
N_{(x,x,x)}=\{ \boldsymbol \xi=(\xi_1,\xi_2,\xi_3),\  \xi_1+\xi_2+\xi_3 = 0,\  \xi_j \in T_x^*S, j=1,2,3\}\ .
\end{equation*}
From the computation of the residue of $\mathcal K_{\boldsymbol \alpha}$ at a pole of type II, the distribution $ F_0$ has global transverse order equal to $2k$ (see \cite{bc}). From  \eqref{quasiS2}, the distribution $ S$ has also a global order, say $m$, and $m\geq 2k$. Let $g\in G$ such that $g(x) = x$. Then
\begin{equation}
\begin{split}
&\sigma_m \big(S\circ \boldsymbol \pi_{\boldsymbol \lambda}(g)\big) \big((x,x,x),\boldsymbol \xi\big) =\\ 
&\kappa(g,x)^{-(\lambda_1+\lambda_2+\lambda_3 +\rho)} \sigma_m(S)\big((x,x,x), \boldsymbol \xi \circ Dg(x)^{-1}\big)\ .
\end{split}
\end{equation}

Let $x=\mathbf 1$, and let $g=a_t$ for $t\in \mathbb R$. Then $a_t(\mathbf 1) = \mathbf 1$and $Da_t(\mathbf 1) = e^{-t} \Id$, so that \eqref{quasiS} implies
\begin{equation}
(e^{t(2k-m)}-1)\, \sigma_m(S) \big((\mathbf 1,\mathbf 1,\mathbf 1), \boldsymbol \xi\big) = - t \,\sigma_m( F_0) \big((\mathbf 1,\mathbf 1,\mathbf 1), \boldsymbol \xi\big)
\end{equation}
An argument similar to the one used in the proof of Proposition \ref{noextI} yields a contradiction. 
\end{proof}

The proof of Theorem \ref{singsupp} in the case of a pole of type II (generic and not in $Z$) is similar to the proof given for the case of a pole of type I, and we omit it.

\subsection{Type I+II}

\begin{proposition}\label{noextI+II}
Let $\boldsymbol \alpha$ be a pole of type I+II, and assume moreover that 
$\boldsymbol \alpha \notin Z$.  Let $\boldsymbol \lambda$ be its associated spectral parameter. Then the distribution $\mathcal K_{\boldsymbol \alpha,\,\mathcal O_0}$ can not be extended to a $\boldsymbol \lambda$ -invariant distribution on $S\times S\times S$.
\end{proposition}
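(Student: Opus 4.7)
The plan is to adapt the Laurent-expansion strategy of subsection 6.1 (Type I) to the situation in which both singular planes pass through $\boldsymbol\alpha$. Up to permutation we may assume $\alpha_3 = -(n-1)-2k_3$ (Type I${}_3$) and $\alpha_1+\alpha_2+\alpha_3 = -2(n-1)-2k$ (Type II); the hypothesis $\boldsymbol\alpha \notin Z$ forces $\alpha_1,\alpha_2 \notin -(n-1)-2\mathbb N$. I use the same one-parameter deformation as in Proposition \ref{noextI}:
\[
\boldsymbol\alpha(s) = (\alpha_1,\alpha_2,\alpha_3+2s),\qquad \boldsymbol\lambda(s) = (\lambda_1+s,\lambda_2+s,\lambda_3).
\]
For small $s\neq 0$ the point $\boldsymbol\alpha(s)$ lies on no plane of poles, but the curve crosses the two singular planes transversally at $s=0$, so $\mathcal K_{\boldsymbol\alpha(s)}$ has a pole of order two there. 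Setting
\[
\mathcal F(s) = \frac{(-1)^{k_3}}{k_3!}\,\frac{\mathcal K_{\boldsymbol\alpha(s)}}{\Gamma(-k_3+s)}\,,
\]
the zero of $1/\Gamma(-k_3+s)$ at $s=0$ absorbs one factor of $s$ and leaves a simple pole, giving $\mathcal F(s) = A/s + G_0 + sG_1 + O(s^2)$. The residue $A$ is (up to a nonzero constant) the double residue of $\mathcal K_{\boldsymbol\alpha}$ along the intersection of the two pole planes, equal up to scalar to $\widetilde{\mathcal K}_{\boldsymbol\alpha}$, hence nonzero, $\boldsymbol\lambda$-invariant, and supported on $\mathcal O_4$. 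Bookkeeping of the Laurent coefficients (using that $R_u, R_v$ are supported respectively on $\overline{\mathcal O_3}$ and $\mathcal O_4$, and that the holomorphic part reduces to $\mathcal K_{\boldsymbol\alpha,\mathcal O_0}$ on the open orbit) yields $G_1\vert_{\mathcal O_0} = \mathcal K_{\boldsymbol\alpha,\mathcal O_0}$.

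Matching $s$-coefficients in $\mathcal F(s)\circ \boldsymbol\pi_{\boldsymbol\lambda(s)}(g) = \mathcal F(s)$, using $\boldsymbol\pi_{\boldsymbol\lambda(s)}(g) = e^{s\phi_g}\boldsymbol\pi_{\boldsymbol\lambda}(g)$ with $\phi_g(x,y,z) = \ln\kappa(g^{-1},x)\kappa(g^{-1},y)$, yields
\[
G_0\circ\boldsymbol\pi_{\boldsymbol\lambda}(g)-G_0 = \phi_{g^{-1}}A,\qquad G_1\circ\boldsymbol\pi_{\boldsymbol\lambda}(g)-G_1 = \phi_{g^{-1}}G_0 + \tfrac12 \phi_{g^{-1}}^2 A.
\]
Restriction to $\mathcal O_4' = S\times S\times S\setminus \mathcal O_4$ kills the $A$-terms: $G_0\vert_{\mathcal O_4'}$ is $\boldsymbol\lambda$-invariant, supported on the closed submanifold $\mathcal O_3\subset\mathcal O_4'$, and nonzero (the order-one vanishing of $\widetilde{\mathcal K}$ along the Type II normal direction produces a nonzero normal derivative on $\mathcal O_3$). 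By Lemma \ref{invsing1} it coincides up to scalar with the canonical Type I residue extension, whose transverse order along $\mathcal O_3$ equals $2k_3$ by the computation of \cite{bc}.

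Suppose for contradiction that a $\boldsymbol\lambda$-invariant extension $T$ of $\mathcal K_{\boldsymbol\alpha,\mathcal O_0}$ to $S\times S\times S$ exists, and set $\mathcal S := T\vert_{\mathcal O_4'} - G_1\vert_{\mathcal O_4'}$. Then $\mathcal S\vert_{\mathcal O_0} = 0$ and
\[
\mathcal S\circ \boldsymbol\pi_{\boldsymbol\lambda}(g) - \mathcal S = -\phi_{g^{-1}} G_0\vert_{\mathcal O_4'}.
\]
Since $G_0\vert_{\mathcal O_4'}$ vanishes on $\mathcal O_0\cup\mathcal O_1$ and on $\mathcal O_0\cup\mathcal O_2$, the restriction of $\mathcal S$ to each of these opens is genuinely $\boldsymbol\lambda$-invariant and supported on $\mathcal O_1$, resp.\ $\mathcal O_2$; two applications of Lemma \ref{invsing1} (using $\alpha_1,\alpha_2\notin -(n-1)-2\mathbb N$) then force $\mathcal S$ to be supported on $\mathcal O_3\subset \mathcal O_4'$. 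Differentiating the functional equation in $g$ gives $\mathcal S\circ d\boldsymbol\pi_{\boldsymbol\lambda}(X) = \langle X,x+y\rangle G_0\vert_{\mathcal O_4'}$ for $X\in\mathfrak p$ and $\mathcal S\circ d\boldsymbol\pi_{\boldsymbol\lambda}(X) = 0$ for $X\in\mathfrak k$; Lemma \ref{invsmooth} then shows $\mathcal S$ is smoothly supported on $\mathcal O_3$.

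The contradiction comes from the transverse symbol analysis at the $a_t$-fixed point $p = (\mathbf 1,\mathbf 1,-\mathbf 1)\in\mathcal O_3$, exactly as in the proof of Proposition \ref{noextI}. Letting $m$ denote the transverse order of $\mathcal S$ along $\mathcal O_3$, the symbol-transformation formula gives $\sigma_m(\mathcal S\circ\boldsymbol\pi_{\boldsymbol\lambda}(a_t))(p,\xi) = e^{t(m-2k_3)}\sigma_m(\mathcal S)(p,\xi)$, and since $\phi_{a_{-t}}(p) = -2t$ the functional equation at the appropriate transverse symbol yields
\[
\bigl(e^{t(m-2k_3)}-1\bigr)\,\sigma_m(\mathcal S)(p,\xi) = 2t\,\sigma_m(G_0\vert_{\mathcal O_4'})(p,\xi).
\]
If $m > 2k_3$, the right-hand side vanishes while the prefactor on the left is nonzero for $t\ne 0$, so $\sigma_m(\mathcal S)(p,\cdot) \equiv 0$; by $G$-transitivity on $\mathcal O_3$ and the symbol transformation law this forces $\sigma_m(\mathcal S) \equiv 0$, contradicting the definition of $m$. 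If $m \leq 2k_3$, comparing the order-$2k_3$ symbol on both sides gives $\sigma_{2k_3}(G_0\vert_{\mathcal O_4'})(p,\xi) = 0$, contradicting that $G_0\vert_{\mathcal O_4'}$ has transverse order exactly $2k_3$. Finally $\mathcal S \equiv 0$ would force $\phi_{g^{-1}}G_0\vert_{\mathcal O_4'} \equiv 0$, also absurd. The main obstacle is the Laurent bookkeeping: one must verify that the $1/s^2$ coefficient $A$ is supported on $\mathcal O_4$—so that restriction to $\mathcal O_4'$ produces a clean invariance statement for $G_0$—and that $G_0\vert_{\mathcal O_4'}$ retains its generic transverse order $2k_3$ in the present non-generic setting.
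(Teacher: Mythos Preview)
Your overall architecture matches the paper's: the same one-parameter deformation $\boldsymbol\alpha(s)=(\alpha_1,\alpha_2,\alpha_3+2s)$, a Laurent/Taylor expansion producing three distributions with nested supports $\mathcal O_4\subset\overline{\mathcal O_3}\subset S^3$, the same cascade of ``almost-invariance'' relations, and the same transverse-symbol contradiction at $(\mathbf 1,\mathbf 1,-\mathbf 1)$. The paper works with the \emph{holomorphic} function $\mathcal F(s)=c\,\widetilde{\mathcal K}_{\boldsymbol\alpha(s)}$ and its Taylor coefficients $F_0,F_1,F_2$, while you work with a function having a simple pole and Laurent coefficients $A,G_0,G_1$; since your $\mathcal F$ differs from the paper's by the factor $(-1)^kk!\,\Gamma(-k+s)=1/s+\text{holomorphic}$, one has $A=F_0$, $G_0\vert_{\mathcal O_4'}=F_1\vert_{\mathcal O_4'}$, etc., so the two setups are equivalent after restriction to $\mathcal O_4'$.

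The one genuine gap is exactly the one you flag at the end: the claim that $G_0\vert_{\mathcal O_4'}\neq 0$. Your justification (``order-one vanishing along the Type~II normal direction produces a nonzero normal derivative on $\mathcal O_3$'') is heuristic, and the appeal to \cite{bc} does not help, since those residue computations are carried out only at \emph{generic} poles of type~I. The paper treats this point as the crux of the I+II case and devotes a separate lemma to it (Lemma~\ref{strongsuppF1}): one shows that if $F_1$ were supported in $\mathcal O_4$ it would annihilate $p_{a_1,a_2,a_3}$ for all sufficiently large $a_1+a_2+a_3$, and then contradicts this by computing $\widetilde{\mathcal K}_{\boldsymbol\alpha(s)}(p_{a_1,a_2,a_3})$ explicitly via the Bernstein--Reznikov formula \eqref{Kpalpha} and checking that for $0\le a_3\le l$ and $a_1+a_2+a_3>k$ the expression has a \emph{simple} zero at $s=0$ with nonzero coefficient (Lemma~\ref{aalarge}). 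Once $G_0\vert_{\mathcal O_4'}\neq 0$ is established, its transverse order along $\mathcal O_3$ is automatically $2k_3$ by the uniqueness part of Lemma~\ref{invsing1} (Bruhat theory forces the order), so your symbol argument then goes through as written. You should replace the hand-wave with this concrete computation; without it the proof is incomplete.
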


\begin{proof} 
Let $\boldsymbol \alpha = (\alpha_1,\alpha_2,\alpha_3)$ and assume that 
\[\alpha_1+\alpha_2+\alpha_3 = -2(n-1)-2k, \quad \alpha_3 = -(n-1)-2l
\]
for some $k,l\in \mathbb N$. Assume moreover that $\boldsymbol \alpha\notin Z$. This time, \emph{two} of the four $\Gamma$-factors (namely $\Gamma( \frac{\alpha_3}{2}+\rho)$ and $\Gamma(\frac{\alpha_1+\alpha_2+\alpha_3}{2}+2\rho)$) in the normalization of $\mathcal K_{\boldsymbol \alpha}$ are singular at $\boldsymbol \alpha$.

For $s\in \mathbb C$, let
\[\boldsymbol \alpha(s) = (\alpha_1,\alpha_2, \alpha_3+2s), \qquad \boldsymbol \lambda (s) = (\lambda_1+s, \lambda_2+s, \lambda_3)\ ,
\]
and consider the distribution-valued function
\begin{equation}\label{defF}
\mathcal F(s) = \frac{(-1)^{k+l}}{k!\,l!}\Gamma\Big(\frac{\alpha_1}{2}+\rho\Big) \Gamma\Big(\frac{\alpha_2}{2}+\rho\Big)\widetilde {\mathcal K}_{\boldsymbol \alpha(s)}\ .
\end{equation}
The Taylor expansion of $\mathcal F$ at $s=0$ reads
\begin{equation}\label{TaylorF2}
\mathcal F(s) =  F_0+ s\, F_1 + s^2\, F_2 + O(s)\ ,
\end{equation}
where $ F_0,  F_1, F_2$ are distributions on $S\times S\times S$.

\begin{lemma} \ 

$i)$ $Supp(F_0)\subset \mathcal O_4$

$ii)$ $Supp(F_1) \subset \overline{\mathcal O_3}$

$iii)$ on $\mathcal O_0$, $F_2$ coincides with $\mathcal K_{\boldsymbol \alpha,\, \mathcal O_0}$.
\end{lemma}

\begin{proof} $i)$ was proved in Proposition \ref{suppK}.

For $s\neq 0$ and $\varphi\in \mathcal C^\infty(S\times S\times S)$ with $Supp(\varphi) \subset \mathcal O_0$, 
\begin{equation}\label{FversusK}
\mathcal F(s) (\varphi)= \frac{(-1)^{k+l}}{k!\,l!}\frac{1}{\Gamma(-k+s)}\frac{1}{\Gamma(-l+s)}\ \mathcal K_{\alpha(s), \mathcal O_0}(\varphi)\ .
\end{equation}
Let $s\rightarrow 0$ and $iii)$ follows. Now let $\varphi\in \mathcal C^\infty(S\times S\times S)$ such that $Supp(\varphi)\cap (\mathcal O_3\cup\mathcal O_4)=\emptyset$. As $\alpha_1,\alpha_2\notin -(n-1)-2\mathbb N$,  the expression $\mathcal K_{\boldsymbol \alpha(s)} (\varphi)$ can be defined by analytic continuation in $s$, and \eqref{FversusK} is still valid. When $s\rightarrow 0$, the right hand side has a zero of order 2. Hence
$F_0(\varphi) = F_1(\varphi) = 0$, thus proving $ii)$.
\end{proof}

\begin{lemma}\label{invI+II}  For any $g\in G$, 
 \begin{align} 
 F_0\circ \boldsymbol \pi_{\boldsymbol \lambda}(g) -  F_0&=0\label{F0inv}\\ F_1\circ \boldsymbol \pi_{\boldsymbol \lambda}(g) - F_1 &= -A_g F_0\label{F1inv}\\
 F_2 \circ \boldsymbol \pi_{\boldsymbol \lambda}(g) - F_2 &= -A_g F_1+\frac{1}{2}\,A_g^2 F_0\label{F2inv}
\end{align}
where $A_g$ is the function on $S\times S\times S$ given by
\begin{equation}
A_g =   \ln \kappa(g^{-1},x)+\ln \kappa(g^{-1},y) \ .
\end{equation}
\end{lemma}
\begin{proof} Recall that $\mathcal F(s) \circ \boldsymbol  \pi _{\boldsymbol \lambda(s)} (g)= \mathcal F(s)$. Now
\begin{equation}\label{Taylorpi}
\boldsymbol \pi_{\boldsymbol \lambda(s)}(g) = \kappa(g^{-1},x)^s\kappa(g^{-1},y)^s \boldsymbol \pi_{\boldsymbol \lambda}(g) = \big(1+sA_g+s^2\frac{A_g^2}{2}+O(\vert s\vert ^3)\big) \,\boldsymbol \pi_{\boldsymbol \lambda}(g)
\end{equation}

Now  use \eqref{TaylorF2} and \eqref{Taylorpi} to get the conclusion. 
\end{proof}
In order to prove Proposition \ref{noextI+II}, a lemma is needed.

\begin{lemma}\label{strongsuppF1}
 \[Supp\,(F_1) = \overline{\mathcal O_3}\ .
 \]
 \end{lemma}
 \begin{proof} By Lemma \ref{invI+II}, the restriction of the distribution $F_1$ to $\mathcal O_4'$ is $\boldsymbol \lambda$-invariant and is supported in $\mathcal O_3$. From this follows that either $Supp(F_1)=\overline{\mathcal O_3}$ or $Supp(F_1)\subset \mathcal O_4$. Let us assume that $Supp(F_1) \subset \mathcal O_4$ and get a contradiction.  If this was the case, then the distribution $F_1$ would be of finite (global) transverse order  along the submanifold $\mathcal O_4$. The function $p_{a_1,a_2,a_3}$ vanishes on $\mathcal O_4$ to an arbitrary given order provided $a_1+a_2+a_3$ is large enough. Hence $F_1(p_{a_1,a_2,a_3}) = 0$ as $a_1+a_2+a_3$ is large enough. 
   
 So to get a contradiction , it is enough to prove that $F_1 (p_{a_1,a_2,a_3}) \neq 0$ for  some triples $(a_1,a_2,a_3)$ with $a_1+a_2+a_3$ arbitrary large. 
 
 \begin{lemma}\label{aalarge}
  Let $a_1,a_2,a_3$ be chosen so that $0\leq a_3\leq l$ and $a_1+a_2+a_3>k$. Then  \[\widetilde {\mathcal K}_{\alpha(s)} (p_{a_1,a_2,a_3}) = C_{a_1,a_2,a_3}\, s+O(s^2) \quad \text{ as } s\rightarrow 0\ ,
 \]
 where the constant $C_{a_1,a_2,a_3}\neq 0$ for $a_2, a_3$ large enough.
 \end{lemma} 
 \begin{proof} A careful look at \eqref{Kpalpha} shows that 

 \[\widetilde {\mathcal K}_{\alpha_1,\alpha_2,\alpha_3+s} (p_{a_1,a_2,a_3}) = \Phi(s) (-k+s)_{a_1+a_2+a_3} (-l+s)_{a_3}
 \]
 where $\Phi(s)\rightarrow \Phi(0)$ and $\Phi(0) \neq 0$ provided $a_1,a_2$ are large enough. If $0\leq a_3\leq l$ then $(-l)_{a_3}\neq 0 $ and if $a_1+a_2+a_3>k$, then $(-k+s)_{a_1+a_2+a_3}$ has a \emph{simple} $0$ for $s=0$. The statement follows.
 \end{proof}
 From \eqref{defF} and \eqref{TaylorF2} follows
 $F_0(p_{a_1,a_2,a_3}) = 0,  F_1(p_{a_1,a_2,a_3}) \neq 0
 $
 under the same constraints on $a_1,a_2,a_3$ as in Lemma \ref{aalarge}.
 \end{proof}

We are now ready for the proof of Proposition \ref{noextI+II}.  As $\alpha_1,\alpha_2\notin -(n-1)-2\mathbb N$, the distribution $\mathcal K_{\boldsymbol \alpha, \mathcal O_0}$ can be extended (by analytic continuation) to a $\boldsymbol \lambda$-invariant distribution $\mathcal K_{\boldsymbol \alpha, \mathcal O'_3}$ on $\mathcal O_3'$. Moreover  it is the unique $\boldsymbol \lambda$-invariant extension to $\mathcal O_3'$ of $\mathcal K_{\boldsymbol \alpha, \mathcal O_0}$, by the same argument as in the proof of Proposition \ref{noextI}.

Suppose there exists a $\boldsymbol \lambda$-invariant distribution $ T$ on $S\times S\times S$ which extends $\mathcal K_{\boldsymbol \alpha,\, \mathcal O_0}$ to $S\times S\times S$. From the last remark follows that $ T$ also extends $\mathcal K_{\boldsymbol \alpha, \mathcal O'_3}$. Form the distribution $ S=  T- F_2$. Then the distribution $ S$ satisfies

\begin{equation}\label{Sinv}
 S \circ \boldsymbol \pi_{\boldsymbol \lambda}(g) - S = g F_1-\frac{1}{2}\,A_g^2 F_0
\end{equation}
and 
\begin{equation}
Supp(S)\subset \overline{ \mathcal O_3}
\end{equation}
 Restrict \eqref{Sinv} to $\mathcal O'_3$ to get
 \begin{equation}\label{S'inv}
 S' - S' \circ \boldsymbol \pi_{\boldsymbol \lambda}(g) = A_g\, F_1'\ ,
 \end{equation}
 where ${\  '}$ means "restriction to $\mathcal O'_3$". From \eqref{F1inv}, $ F_1'$ is $\boldsymbol \lambda$-invariant on $\mathcal O_3'$. Arguing as in the previous situations, $S'$ is smoothly supported in $\mathcal O_3$, as well as $ F'_1$. We now reproduce the argument of Proposition \ref{noextI}, getting a contradiction as, by Lemma  \ref{strongsuppF1}, $F_1'\neq 0$.
\end{proof}

The proof of Theorem \ref{singsupp} in the case of a pole of type I+II (and not in $Z$) requires some more work.

\begin{lemma} Let $\boldsymbol \alpha$ be a pole of typeI+II, and assume that $\boldsymbol \alpha\notin Z$. Let $\boldsymbol \lambda$ be the associated spectral parameter. Let $T$be a $\boldsymbol \lambda$-invariant distribution. Then $Supp(T)\subset \mathcal O_4$.
 \end{lemma}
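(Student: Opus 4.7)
The plan is to show $Supp(T)\subset\mathcal O_4$ by peeling off orbits one at a time. First, $T|_{\mathcal O_0}$ is a $\boldsymbol\lambda$-invariant distribution on the single open $G$-orbit $\mathcal O_0$, hence a scalar multiple of $\mathcal K_{\boldsymbol\alpha,\mathcal O_0}$; if that scalar were nonzero, rescaling $T$ would give a $\boldsymbol\lambda$-invariant extension of $\mathcal K_{\boldsymbol\alpha,\mathcal O_0}$ to $S\times S\times S$, contradicting Proposition \ref{noextI+II}. Thus $T|_{\mathcal O_0}=0$, so $Supp(T)\subset S\times S\times S\setminus\mathcal O_0$.

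Next, I would eliminate $\mathcal O_1$ and $\mathcal O_2$. Because $\boldsymbol\alpha\notin Z$ and $\alpha_3=-(n-1)-2l$, the characterization \eqref{ZIalpha} forces $\alpha_1,\alpha_2\notin -(n-1)-2\mathbb N$. Applying the index-permuted form of Lemma \ref{invsing1} to the $G$-invariant open set $\mathcal O_0\cup\mathcal O_1=\{x\neq y,\ x\neq z\}$, in which $\mathcal O_1$ is a closed submanifold, the restriction $T|_{\mathcal O_0\cup\mathcal O_1}$ is $\boldsymbol\lambda$-invariant with support in $\mathcal O_1$, and must therefore vanish since $\alpha_1\notin -(n-1)-2\mathbb N$. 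The same argument applied on $\mathcal O_0\cup\mathcal O_2$ (using $\alpha_2$) gives $T|_{\mathcal O_0\cup\mathcal O_2}=0$. Hence $Supp(T)\subset\overline{\mathcal O_3}$.

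The decisive step is to eliminate the open piece $\mathcal O_3$ of $\overline{\mathcal O_3}$. On the $G$-invariant open set $\mathcal O_4'=S\times S\times S\setminus\mathcal O_4$, $\mathcal O_3$ is a closed submanifold, and the identity \eqref{F1inv} combined with $Supp(F_0)\subset\mathcal O_4$ shows that $F_1|_{\mathcal O_4'}$ is $\boldsymbol\lambda$-invariant. By Lemma \ref{strongsuppF1} it is nonzero, with support exactly $\mathcal O_3$. The uniqueness statement of Lemma \ref{invsing1}, applied on $\mathcal O_4'$, then yields $T|_{\mathcal O_4'}=c\,F_1|_{\mathcal O_4'}$ for some scalar $c\in\mathbb C$.

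It remains to force $c=0$. Set $R=T-cF_1$. Then $R$ vanishes on $\mathcal O_4'$, so $Supp(R)\subset\mathcal O_4$, and Lemma \ref{invI+II} gives the quasi-invariance
\[ R\circ\boldsymbol\pi_{\boldsymbol\lambda}(g)-R=cA_gF_0. \]
This is exactly the situation handled in the type II analysis: $R$ has some finite global transverse order $m$ along $\mathcal O_4$, and $F_0$ has order exactly $2k$. Specialising to $g=a_t$ at the fixed point $(\mathbf 1,\mathbf 1,\mathbf 1)\in\mathcal O_4$ and applying the transformation rules of Proposition \ref{symbol}, one obtains, for every conormal vector $\boldsymbol\xi$, an identity of the form
\[ \bigl(e^{t(2k-m)}-1\bigr)\sigma_m(R)(\mathbf 1,\mathbf 1,\mathbf 1,\boldsymbol\xi)=ct\,\sigma_m(F_0)(\mathbf 1,\mathbf 1,\mathbf 1,\boldsymbol\xi). \]
If $m>2k$ the right side vanishes identically in $t$, forcing $\sigma_m(R)$ to vanish at $(\mathbf 1,\mathbf 1,\mathbf 1)$; the quasi-invariance combined with the transitivity of $G$ on $\mathcal O_4$ then propagates this vanishing everywhere, contradicting the definition of $m$. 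If $m=2k$ the left side vanishes, forcing $c\,\sigma_{2k}(F_0)(\mathbf 1,\mathbf 1,\mathbf 1,\boldsymbol\xi)=0$ for all $\boldsymbol\xi$; since $F_0$ has global transverse order \emph{exactly} $2k$, its top symbol is nonzero at some conormal direction, whence $c=0$. Consequently $T|_{\mathcal O_4'}=0$ and $Supp(T)\subset\mathcal O_4$. The main obstacle is this final transverse-symbol computation, which must be carried out carefully at the fixed point $(\mathbf 1,\mathbf 1,\mathbf 1)$, but it proceeds along the template already established in the type II case.
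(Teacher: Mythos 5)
Your argument is correct and follows essentially the same route as the paper: kill $T|_{\mathcal O_0}$ via the non-extension result for type I+II, eliminate $\mathcal O_1,\mathcal O_2$ with Lemma \ref{invsing1} since $\alpha_1,\alpha_2\notin-(n-1)-2\mathbb N$, identify $T|_{\mathcal O_4'}$ with $cF_1'$ by the uniqueness part of Lemma \ref{invsing1} together with Lemma \ref{strongsuppF1}, and force $c=0$ by the transverse-symbol computation for $T-cF_1$ along $\mathcal O_4$. Your explicit split into the cases $m>2k$ and $m=2k$ at the end is in fact a slightly more careful rendering of the contradiction the paper obtains by letting $t\to\infty$.
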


\begin{proof}
Let $\boldsymbol \alpha= (\alpha_1,\alpha_2,\alpha_3)$ satisfying
\[\alpha_1+\alpha_2+\alpha_3 = -2(n-1)-2k, \alpha_3 =-(n-1) -2l
\]
where $k,l\in \mathbb N$ and assume moreover that $\boldsymbol \alpha\notin Z$. Let $\boldsymbol \lambda$ be its associated spectral parameter, and let $T$ be a $\boldsymbol \lambda$-invariant distribution on $S\times S\times S$. By the same argument as in the case of a generic pole of type I, we get that $T$ is supported in $\overline {\mathcal O_3}=\mathcal O_4\cup \mathcal O_3$. Let $T'$ be its restriction to $\mathcal O_4'$. Then $F_1'$ and $T'$ are two $\boldsymbol \lambda$-invariant distributions on $\mathcal O_4'$ which are supported in $\mathcal O_3$. Hence by Lemma \ref{invsing1}, $T'=cF_1'$ for some constant $c$. Assume $c\neq 0$, and substituting $\frac{1}{c} T$ to $T$, we assume that $c=1$. Consider then the distribution $S=T-cF_1$. Then $Supp(S)\subset \mathcal O_4$, and $S$ satisfies
\[S\circ \boldsymbol \pi_{\boldsymbol \lambda}(g) -S = AF_0\ .
\]
As argued previously, this implies that $S$ is smoothly supported on $\mathcal O_4$. Let $p$ be its transverse order along $\mathcal O_4$. As $S$ is of transverse order $2k$ (see \cite{bc}), $p\geq 2k$. Now 
the transverse symbols at $(\mathbf 1,  \mathbf 1,\mathbf 1)$ satisfy
 \[\big(e^{t(2k-p)} -1\big) \sigma_p( S)\big((\mathbf 1,  \mathbf 1,\mathbf 1), \boldsymbol \xi\big)  = 2t\sigma_p( F_1)\big((\mathbf 1,  \mathbf 1,\mathbf 1), \boldsymbol \xi\big) \]
 for any $t\in\mathbb R$. As $ F_0\neq 0$ (and hence $\sigma_p( F_0)\big((\mathbf 1,  \mathbf 1,\mathbf 1) \boldsymbol \xi\big)\neq 0$), letting $t$ go to infinity yields a contradiction. Hence $T$ is supported in $\mathcal O_4$.
\end{proof}

 \section {Multiplicity one results}

 \subsection{Type I}
 \begin{theorem} 
Let $\boldsymbol \alpha$ be a generic pole of type $I$, and let $\boldsymbol \lambda$ be its associated spectral parameter. Let $T\neq 0$ be a $\boldsymbol \lambda$-invariant distribution. There exists a constant $C$ such that $T = C \widetilde {\mathcal K}^{\boldsymbol \lambda}$.
\end{theorem}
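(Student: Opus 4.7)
The plan is to derive the theorem as a formal consequence of the theorem of the support (Theorem \ref{singsupp}) together with the two uniqueness-type lemmas \ref{invsing1} and \ref{invsing2} from the generic case, applied in sequence to a nested pair of strata. No new invariant calculus is needed: all the heavy lifting is already hidden in Theorem \ref{singsupp}.

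First I would use the assumption that $\boldsymbol\alpha$ is a \emph{generic} pole of type $I$, say of type $I_3$, so that $\alpha_3=-(n-1)-2k$ for some $k\in\mathbb N$ and $\boldsymbol\alpha$ lies on no other plane of poles. Proposition \ref{suppK}(iii) gives $\mathrm{Supp}(\widetilde{\mathcal K}^{\boldsymbol\lambda})=\overline{\mathcal O_3}$, and Theorem \ref{singsupp} then yields $\mathrm{Supp}(T)=\overline{\mathcal O_3}$ as well. I would then restrict both distributions to the open $G$-invariant set $\mathcal O_4'=(S\times S\times S)\setminus\mathcal O_4$, in which $\mathcal O_3$ sits as a closed regular submanifold. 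Both $T|_{\mathcal O_4'}$ and $\widetilde{\mathcal K}^{\boldsymbol\lambda}|_{\mathcal O_4'}$ are $\boldsymbol\lambda$-invariant distributions on $\mathcal O_4'$ with support in $\mathcal O_3$. The second is nonzero by Proposition \ref{suppK}(iii); the first is also nonzero, for otherwise $T$ would be supported in $\mathcal O_4$ and Lemma \ref{invsing2} would force $\alpha_1+\alpha_2+\alpha_3\in -2(n-1)-2\mathbb N$, making $\boldsymbol\alpha$ a pole of type $II$ as well, in contradiction with the genericity of $\boldsymbol\alpha$ as a type-$I$ pole.

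With both restrictions nonzero, I would invoke the multiplicity-one clause of Lemma \ref{invsing1}, applied with $\mathcal O=\mathcal O_4'$: any two nonzero $\boldsymbol\lambda$-invariant distributions on $\mathcal O_4'$ supported in $\mathcal O_3$ are proportional. This produces a constant $C\neq 0$ such that $T|_{\mathcal O_4'}=C\,\widetilde{\mathcal K}^{\boldsymbol\lambda}|_{\mathcal O_4'}$. I then form $S=T-C\,\widetilde{\mathcal K}^{\boldsymbol\lambda}$. It is $\boldsymbol\lambda$-invariant and vanishes on $\mathcal O_4'$, hence $\mathrm{Supp}(S)\subset\mathcal O_4$.

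To conclude, I would apply Lemma \ref{invsing2} once more: if $S\neq 0$, then $\boldsymbol\alpha$ would again be forced to be a pole of type $II$, contradicting genericity of the type-$I$ pole. Therefore $S=0$ and $T=C\,\widetilde{\mathcal K}^{\boldsymbol\lambda}$, as required. The main obstacle has already been overcome in Section 6 in the form of Theorem \ref{singsupp}; once one has that, the present statement is essentially a bookkeeping exercise in which genericity of the pole is used exactly twice, each time to rule out the one way in which the corresponding stratum argument could otherwise fail.
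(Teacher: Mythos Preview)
Your proof is correct and follows essentially the same route as the paper: invoke Theorem~\ref{singsupp} to force $\mathrm{Supp}(T)\subset\overline{\mathcal O_3}$, use Lemma~\ref{invsing1} on $\mathcal O_4'$ to obtain proportionality up to a distribution supported in $\mathcal O_4$, and kill this remainder via Lemma~\ref{invsing2} and the genericity assumption. The only minor difference is that you argue separately that $T|_{\mathcal O_4'}\neq 0$, which is not strictly needed since Lemma~\ref{invsing1} already covers the case $S=0$ (with $C=0$); the paper only checks that $\widetilde{\mathcal K}^{\boldsymbol\lambda}|_{\mathcal O_4'}\neq 0$.
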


\begin{proof} From Theorem \ref{singsupp}, $T$ is supported in $\overline{\mathcal O_3} = \mathcal O_3\cup \mathcal O_4$. Let $\mathcal O'_4 = S\times S\times S \setminus \mathcal O_4$, and recall that $\mathcal O_3$ is a regular submanifold in $\mathcal O_4'$. Let $T'$ be the  restriction of $T$ to $\mathcal O_4'$. It is a $\boldsymbol \lambda$-invariant distribution supported in $\mathcal O_3$. On the other hand, 
consider the restriction of $\widetilde {\mathcal K}_{\boldsymbol \alpha}$ to $\mathcal O_4'$. Then it is a $\boldsymbol \lambda$-invariant distribution supported in $\mathcal O_3$, and it is not $0$, otherwise, $\widetilde {\mathcal K}_{\boldsymbol \alpha}$ would be supported in $\mathcal O_4$, a fact that would imply, by Lemma \ref{invsing2} that $\boldsymbol \alpha$ is a pole of type II, which is \emph{not} the case. Now Lemma \ref{invsing1} implies that there exists a constant $C$ such $T' = C \widetilde {\mathcal K}_{\boldsymbol \alpha, \mathcal O_4'}$.

Now $T-C \widetilde {\mathcal K}^{\boldsymbol \lambda}$ is a $\boldsymbol \lambda$-invariant distribution supported in $\mathcal O_4$. As $\boldsymbol \lambda$ is not a pole of type II, Lemma   \ref{invsing2} forces $T-C \widetilde {\mathcal K}^{\boldsymbol \lambda} = 0$ and the conclusion follows.

\end{proof}

\subsection{Type II and type I+II}

\begin{theorem}\label{mult1II}
Let $\boldsymbol \lambda$ be a pole of type II and assume that $\boldsymbol \lambda\notin Z$. Let $T\neq 0$ be a $\boldsymbol \lambda$-invariant distribution. There exists a constant $C$ such that $T=C\widetilde {\mathcal K}^{\boldsymbol \lambda}$.
\end{theorem}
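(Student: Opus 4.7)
The plan is to reduce the problem to an analysis of transverse symbols on the closed orbit $\mathcal O_4$, using $\widetilde{\mathcal K}^{\boldsymbol \lambda}$ as a fixed reference against which to compare an arbitrary invariant $T$.

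\smallskip

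\noindent\textbf{Step 1 (smooth support).} By Theorem \ref{singsupp} together with Proposition \ref{suppK}(iv), respectively by the last lemma of Section 6.3 in the type I+II case, $Supp(T)\subset \mathcal O_4$. The vector fields $\widetilde X$ for $X\in\mathfrak g$ span the tangent space to $\mathcal O_4$ at each of its points, and the first-order operators $d\boldsymbol \pi_{\boldsymbol \lambda}(X)=-\widetilde X+a_X$ annihilate $T$. Lemma \ref{invsmooth} then shows that $T$ is smoothly supported on $\mathcal O_4$; write $p$ for its transverse order.

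\smallskip

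\noindent\textbf{Step 2 (the order equals $2k$).} Fix $m_0=(\mathbf 1,\mathbf 1,\mathbf 1)\in\mathcal O_4$, which is fixed by $A=\{a_t\}$. Plugging the invariance equation $a_g\,(g^{-1})_*T=T$ with $g=a_t$ into the transformation rule of Proposition \ref{symbol}, and using $Da_t(\mathbf 1)=e^{-t}\Id$ and $\kappa(a_t,\mathbf 1)=e^{-t}$, one obtains exactly as in the proof of Proposition \ref{noextI+II}
\[
 \bigl(e^{t(p-(n-1))}-e^{-t(\lambda_1+\lambda_2+\lambda_3+3\rho)}\bigr)\,\sigma_p(T)(m_0,\boldsymbol\xi)=0\quad(t\in\mathbb R).
\]
A type II pole satisfies $\lambda_1+\lambda_2+\lambda_3 = 3\rho + \alpha_1+\alpha_2+\alpha_3 = -\rho-2k$, so either $p=2k$, or $\sigma_p(T)(m_0,\cdot)\equiv 0$. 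The latter alternative, combined with the $K$-invariance of $T$ and the transitivity of $K$ on $\mathcal O_4\simeq S$, would force $\sigma_p(T)$ to vanish identically, contradicting the definition of $p$. Hence every nonzero $\boldsymbol \lambda$-invariant distribution smoothly supported on $\mathcal O_4$ has transverse order exactly $2k$, matching that of $\widetilde{\mathcal K}^{\boldsymbol \lambda}$ as computed in \cite{bc}.

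\smallskip

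\noindent\textbf{Steps 3--4 (symbol comparison and descent).} Both top symbols $\sigma_{2k}(T)(m_0,\cdot)$ and $\sigma_{2k}(\widetilde{\mathcal K}^{\boldsymbol \lambda})(m_0,\cdot)$ lie in the finite-dimensional space of $K_{\mathbf 1}$-invariant homogeneous polynomials of degree $2k$ on the conormal fibre $N_{m_0}=\{(\xi_1,\xi_2,\xi_3)\in (T^*_{\mathbf 1}S)^3:\xi_1+\xi_2+\xi_3=0\}$, where $K_{\mathbf 1}\simeq SO(n-1)$ denotes the stabilizer of $\mathbf 1$ in $K$. The remaining invariance equations $d\boldsymbol\pi_{\boldsymbol\lambda}(X)T=0$ for $X\in\mathfrak p$, passed through the symbol transformation rules \eqref{symbdiffeo}--\eqref{transsymbol} in their infinitesimal form, yield a finite linear system on this space; the plan is to establish that its solution space is one-dimensional for $\boldsymbol\lambda\notin Z$. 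Granting this, there is $C\in\mathbb C$ such that $T-C\widetilde{\mathcal K}^{\boldsymbol\lambda}$ has vanishing top symbol at $m_0$, hence everywhere on $\mathcal O_4$ by $K$-equivariance; its transverse order therefore drops strictly below $2k$. Step 2 then forces $T-C\widetilde{\mathcal K}^{\boldsymbol\lambda}=0$.

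\smallskip

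\noindent\textbf{Main obstacle.} The delicate point is the one-dimensionality claim in Steps 3--4. In the type I situation (Section 7.1), the restriction of $T$ to the larger open set $\mathcal O_4'$ already lives on the smaller orbit $\mathcal O_3$, where Lemma \ref{invsing1} provides a built-in multiplicity one statement; no such intermediate orbit is available here, so multiplicity one has to be extracted directly from the infinitesimal action of $\mathfrak p$ on the top-symbol space. This amounts to a symbol-level counterpart of the vanishing computations of Propositions \ref{van1}--\ref{van2}, with the hypothesis $\boldsymbol\lambda\notin Z$ being precisely what keeps the resulting algebraic system non-degenerate.
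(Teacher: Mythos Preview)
Your Steps 1 and 2 are sound and align with the paper's setup: support in $\mathcal O_4$ follows from Theorem \ref{singsupp}, smooth support from Lemma \ref{invsmooth}, and the transverse-order rigidity $p=2k$ that you derive is correct. Your descent mechanism in Step 4 is also valid in principle: once the top symbol is matched, the difference is $\boldsymbol\lambda$-invariant of strictly lower transverse order and must vanish by Step 2.

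The genuine gap, which you yourself flag, is the one-dimensionality of the top-symbol space in Step 3. This is not a side issue---it is the entire content of the theorem, and the paper devotes all of Sections 7.2--7.3 to it. The paper does \emph{not} attack it via transverse symbols on $\mathcal O_4$. Instead it invokes Lemma \ref{lemmabidiff} to identify $\boldsymbol\lambda$-invariant distributions supported on $\mathcal O_4$ with covariant bi-differential operators $B:\mathcal C^\infty(S\times S)\to\mathcal C^\infty(S)$, so that $\dim Tri(\boldsymbol\lambda)=\dim\mathcal{BD}_G(\lambda_1,\lambda_2;k)$ (Proposition 7.1). Passing to the noncompact picture on $F\simeq\mathbb R^{n-1}$, such a $B$ must take the explicit form $\sum_{r+t\le k}c_{r,t}\,\Delta_{xx}^r\Delta_{xy}^{k-r-t}\Delta_{yy}^t$, and covariance becomes the explicit linear system $(\mathcal S)$ of recursions $E^{(1)}_{r,t}$, $E^{(2)}_{r,t}$ for the coefficients $c_{r,t}$ (taken from Ovsienko--Redou \cite{or}). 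The uniqueness claim is then Proposition \ref{Zkuniq}: a case-by-case combinatorial argument, several pages long, showing that for $(\lambda_1,\lambda_2)\notin Z_k$ this system has a one-dimensional solution space. This is precisely where the hypothesis $\boldsymbol\lambda\notin Z$ is used, via the characterization of $Z_k$ in Lemma \ref{charZk}; the non-vanishing of specific coefficients in the recursions depends on it.

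Your ``symbol-level system'' is in fact equivalent to $(\mathcal S)$: since every term of $B$ has the same total order $2k$, the operator coincides with its principal symbol, so the infinitesimal $\mathfrak p$-constraints you allude to \emph{are} the equations $E^{(1)}_{r,t},E^{(2)}_{r,t}$. You have therefore not bypassed the hard part but only rephrased it. To complete the proof along your lines you would still need the full analysis of Section 7.3, or an independent argument of comparable strength; invoking Propositions \ref{van1}--\ref{van2} does not suffice, as those concern the vanishing of $\widetilde{\mathcal K}_{\boldsymbol\alpha}$ rather than the structure of the solution space of $(\mathcal S)$.
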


The proof is more difficult, and we need first to recall
the relation between $\boldsymbol \lambda$-invariant distributions supported in $\mathcal O_4$ and \emph{covariant bi-differential operators}, for which we refer to \cite{bc}.
 
 \begin{lemma}\label{lemmabidiff}
  Let $\boldsymbol \lambda\in \mathbb C^3$. Let $T\neq 0$ be a distribution on $S\times S\times S$ which is $\boldsymbol \lambda$-invariant and such that $Supp(T)\subset \mathcal O_4$. There exists a bi-differential operator $B : \mathcal C^\infty(S\times S)\longrightarrow C^\infty(S)$ which intertwines $(\pi_{\lambda_1}\otimes \pi_{\lambda_2})$ and $\pi_{-\lambda_3}$ such that, for $f\in \mathcal C^\infty(S\times S), g\in \mathcal C^\infty(S)$ 
 \begin{equation}\label{bidiff}
 T(f\otimes g) = \int_S Bf(x)\, g(x) \,dx\ .
 \end{equation}
 Conversely, given a bi-differential operator $B : \mathcal C^\infty(S\times S)\longrightarrow C^\infty(S)$ which intertwines $(\pi_{\lambda_1}\otimes \pi_{\lambda_2})$ and $\pi_{-\lambda_3}$, then \eqref{bidiff} defines a $\boldsymbol \lambda$-invariant distribution. 
\end{lemma}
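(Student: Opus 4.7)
The plan is to express $T$ via its transverse derivatives along $\mathcal O_4 \simeq S$ and read off the bi-differential operator $B$ from the action on decomposable test functions $f \otimes g$. First I would verify that $T$ is automatically smoothly supported on $\mathcal O_4$ in the sense of section \ref{smoothsupp}. Since $G$ acts transitively on $\mathcal O_4$, the vector fields $\{\widetilde X : X \in \mathfrak g\}$ span the tangent space to $\mathcal O_4$ at every point; the $\boldsymbol \lambda$-invariance of $T$ gives $T \circ d\boldsymbol \pi_{\boldsymbol \lambda}(X) = 0$ for all $X \in \mathfrak g$, each such operator being of the form $-\widetilde X + a_X$ with $a_X \in \mathcal C^\infty(S \times S \times S)$. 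Lemma \ref{invsmooth}, applied with $U_j = 0$, then forces $T$ to be smoothly supported on $\mathcal O_4$.

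Second, the manifold version of Lemma \ref{smoothtrans} identifies $T$ with a unique transverse differential operator along $\mathcal O_4$, which in local coordinates adapted to the diagonal (parametrising a neighbourhood of $(x_0, x_0, x_0)$ by $(x, u, v)$ with $x$ on the diagonal and $u, v$ in transverse directions) is a locally finite sum $\sum_{\alpha, \beta} c_{\alpha\beta}(x) \partial_u^\alpha \partial_v^\beta$ with smooth coefficients. Specialising to $\varphi = f \otimes g$ and using the chain rule, $\partial_u^\alpha \partial_v^\beta \widetilde\varphi(x,0,0)$ factors as a tangential differential operator applied to $f(x,\cdot)$ at $x$ times a tangential differential operator applied to $g$ at $x$. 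Integrating by parts on $S$ to transfer all $v$-derivatives (which act on $g$) onto the remaining factor yields an identity $T(f \otimes g) = \int_S (Bf)(x)\, g(x)\, dx$ in which $B$ is visibly a bi-differential operator from $\mathcal C^\infty(S \times S)$ to $\mathcal C^\infty(S)$, since $Bf(x)$ depends only on a jet of $f$ at $(x,x)$.

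Third, the intertwining property follows from $\boldsymbol \lambda$-invariance combined with the duality relation \eqref{duality}. Expanding $T(\boldsymbol \pi_{\boldsymbol \lambda}(g)(f \otimes h)) = T(f \otimes h)$ using the product structure of $\boldsymbol \pi_{\boldsymbol \lambda}$ gives
\[
\int_S B[(\pi_{\lambda_1} \otimes \pi_{\lambda_2})(g) f](x)\, (\pi_{\lambda_3}(g) h)(x)\, dx = \int_S Bf(x)\, h(x)\, dx.
\]
The duality relation \eqref{duality} transfers $\pi_{\lambda_3}(g)$ onto the other factor; since $h \in \mathcal C^\infty(S)$ is arbitrary, this forces $B \circ (\pi_{\lambda_1} \otimes \pi_{\lambda_2})(g) = \pi_{-\lambda_3}(g) \circ B$, the claimed intertwining.

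For the converse, given such a $B$, formula \eqref{bidiff} defines a continuous trilinear form on decomposable test functions, whose unique continuous extension to all of $\mathcal C^\infty(S \times S \times S)$ is guaranteed by the Schwartz kernel theorem and the continuity of $B$. Locality of the bi-differential operator $B$ forces $Supp(T) \subset \mathcal O_4$, and the computation of the preceding paragraph run in reverse gives $\boldsymbol \lambda$-invariance. The main obstacle is really step one---applying the transverse symbol formalism of section \ref{smoothsupp} to an a priori arbitrary distribution supported on $\mathcal O_4$---but the $G$-transitivity of $\mathcal O_4$ combined with Lemma \ref{invsmooth} handles this cleanly. The integration by parts in step two, while notationally delicate on the curved manifold $S$, is the standard adjoint formalism on a compact Riemannian manifold.
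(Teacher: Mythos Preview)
Your outline is correct and follows the natural route using the transverse-symbol machinery of Section~\ref{smoothsupp}. The paper itself does not give a proof of this lemma but simply refers to \cite{bc}; your argument is essentially what that reference contains, and it is fully consistent with how the paper invokes Lemma~\ref{invsmooth} elsewhere (e.g.\ in the proof of Proposition~\ref{noextI}). One small bookkeeping point: the relation $T\circ d\boldsymbol\pi_{\boldsymbol\lambda}(X)=0$ must be transposed to the form $(\widetilde X+b_X)T=0$ with $b_X$ smooth before Lemma~\ref{invsmooth} applies, but this is routine.
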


Given $\lambda, \mu\in \mathbb C$, denote by $\mathcal B\mathcal D_G(\lambda,\mu; k)$ the space of bi-differential operators from $\mathcal C^\infty(S\times S)$ into $\mathcal C^\infty(S)$ which are covariant w.r.t. $\pi_\lambda\otimes \pi_\mu$ and $\pi_{\lambda+\mu+\rho+2k}$. 

\begin{proposition} Let $\boldsymbol \lambda$ be a pole of type II (including type I+II), and assume $\boldsymbol \lambda \notin Z$. Let $k$ be the integer such that $\lambda_1+\lambda_2+\lambda_3 = -\rho-2k$. Then
\[\dim Tri(\boldsymbol \lambda) = \dim\big( \mathcal B\mathcal D_G(\lambda_1, \lambda_2; k)\big)\ .\]
\end{proposition}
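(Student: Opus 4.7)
The plan is to establish a linear isomorphism between $Tri(\boldsymbol\lambda)$ and $\mathcal{BD}_G(\lambda_1,\lambda_2;k)$ by first showing that every $\boldsymbol\lambda$-invariant distribution is supported in the diagonal $\mathcal O_4$, and then invoking Lemma \ref{lemmabidiff} together with a parameter-matching computation.

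First I would show that $Tri(\boldsymbol\lambda)$ consists entirely of distributions supported in $\mathcal O_4$. Let $T\in Tri(\boldsymbol\lambda)$ be nonzero. Since $\boldsymbol\lambda$ is a pole of type II (possibly type I+II) and $\boldsymbol\lambda\notin Z$, Theorem \ref{singsupp} applies and gives $Supp(T)=Supp(\widetilde{\mathcal K}^{\boldsymbol\lambda})$. By Proposition \ref{suppK} $iv)$, for any pole of type II we have $Supp(\widetilde{\mathcal K}^{\boldsymbol\lambda})\subset\mathcal O_4$. (Note: for type I+II the same inclusion was in fact reproved as the final lemma of Section~6.) Combining these, $Supp(T)\subset\mathcal O_4$, and of course the zero distribution is trivially supported in $\mathcal O_4$. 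Hence every $T\in Tri(\boldsymbol\lambda)$ is supported in $\mathcal O_4$.

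Next I would apply Lemma \ref{lemmabidiff}. That lemma provides two constructions inverse to each other: starting from a $\boldsymbol\lambda$-invariant distribution $T$ supported in $\mathcal O_4$ it produces a bi-differential operator $B$ intertwining $\pi_{\lambda_1}\otimes\pi_{\lambda_2}$ with $\pi_{-\lambda_3}$ via $T(f\otimes g)=\int_S Bf(x)\,g(x)\,dx$, and conversely any such $B$ defines via this integral formula a $\boldsymbol\lambda$-invariant distribution supported in $\mathcal O_4$. These maps are linear and mutually inverse, so they provide a linear isomorphism between the space of $\boldsymbol\lambda$-invariant distributions supported in $\mathcal O_4$ and the space of covariant bi-differential operators from $\mathcal C^\infty(S\times S)$ to $\mathcal C^\infty(S)$ intertwining $\pi_{\lambda_1}\otimes\pi_{\lambda_2}$ with $\pi_{-\lambda_3}$. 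By the first paragraph, the source of this isomorphism is precisely $Tri(\boldsymbol\lambda)$.

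Finally I would match the parameters. The defining relation of the integer $k$ is $\lambda_1+\lambda_2+\lambda_3=-\rho-2k$, equivalently $-\lambda_3=\lambda_1+\lambda_2+\rho+2k$, which is exactly the target index defining $\mathcal{BD}_G(\lambda_1,\lambda_2;k)$. Therefore the space of bi-differential operators that intertwine $\pi_{\lambda_1}\otimes\pi_{\lambda_2}$ with $\pi_{-\lambda_3}$ coincides with $\mathcal{BD}_G(\lambda_1,\lambda_2;k)$, and the dimension identity follows from the isomorphism of the previous paragraph.

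The only potentially delicate point is the inclusion $Supp(T)\subset\mathcal O_4$ in the type I+II case, since a priori Theorem \ref{singsupp} only tells us $Supp(T)$ equals the support of $\widetilde{\mathcal K}^{\boldsymbol\lambda}$, and one must know that this support lies in $\mathcal O_4$ rather than in the larger closed $G$-invariant set $\overline{\mathcal O_j}$ suggested by the type I component; but this is exactly what the last lemma of Section~6 establishes. Once that is in hand, the rest is a direct application of Lemma \ref{lemmabidiff} and an identification of parameters, so no further obstacle is expected.
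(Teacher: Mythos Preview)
Your proposal is correct and follows essentially the same route as the paper's own proof: first use Theorem \ref{singsupp} (together with Proposition \ref{suppK} $iv)$, or equivalently the final lemma of Section~6 for the type I+II case) to get $Supp(T)\subset\mathcal O_4$ for every $T\in Tri(\boldsymbol\lambda)$, and then invoke Lemma \ref{lemmabidiff} and the parameter identification $-\lambda_3=\lambda_1+\lambda_2+\rho+2k$. The paper's proof is just the two-sentence version of what you wrote out in full.
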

\begin{proof} Let $T\in Tri(\boldsymbol \lambda)$. By Theorem \ref{singsupp}, in both cases $Supp(T)\subset \mathcal O_4$. Hence the result follows from Lemma \ref{lemmabidiff}.
\end{proof}

\begin{lemma}\label{charZk}
 Let $k$ be a given integer. Let $\mathcal H_k$ be the plane in $\mathbb C^3$ given by the equation
\[\lambda_1+\lambda_2 +\lambda_3 =-\rho-2k\ .
\]
Then $\boldsymbol \lambda\in Z$ if and only if (at least) one of the following properties $i)$ to $vi)$ is verified

$i)$ $\lambda_1+\lambda_2 \in -k+\mathbb N$

$ii)$ $\lambda_1 \in -\rho-k-\mathbb N$

$iii)$ $\lambda_2 \in -\rho-k-\mathbb N$

$iv)$ $\lambda_1 =-\rho-l,\  l\in \mathbb N$ and $\lambda_2 \in \{-k, -k+1,\dots, -k+l\}$.

$v)$ $\lambda_2 =-\rho-l,\  l\in \mathbb N$ and  $\lambda_1 \in \{-k, -k+1,\dots, -k+l\}$.

$vi)$ $\lambda_1 = -k+p, \lambda_2 = -k+q$, $p,q\in \mathbb N, p+q\leq k$.

\end{lemma}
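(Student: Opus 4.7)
The plan is to reduce Lemma \ref{charZk} to a direct translation of Theorem \ref{ZK} after intersection with the plane $\mathcal H_k$. Theorem \ref{ZK} presents $Z$ as the union of six families, parametrized by the choice of type (I or II) and the choice of which index plays the role of ``$\lambda_3$'' in \eqref{ZIlambda} or \eqref{ZIIlambda}. Since $\mathcal H_k$ gives $\lambda_3=-\rho-2k-\lambda_1-\lambda_2$, each family is a 1-parameter discrete subfamily of $\mathcal H_k$, and the task is to check case-by-case that these six families match, respectively, conditions (i)--(vi) of the lemma.

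Concretely, I would verify each case by substitution:
\begin{itemize}
\item For type II with index $3$ in the role of ``$3$'': set $m=\lambda_1+\lambda_2$, so that $l=2k+m$; the constraints $l\in\mathbb N$, $|m|\le l$, $l\equiv m\pmod 2$ simplify to $m\in -k+\mathbb N$ (the parity $l-m=2k$ is automatic). This is exactly (i).
\item For type II with index $1$ (resp.\ $2$) in the role of ``$3$'': set $\lambda_1=-\rho-l$ (resp.\ $\lambda_2=-\rho-l$) and compute $\lambda_2+\lambda_3=l-2k$ (resp.\ $\lambda_1+\lambda_3=l-2k$); the inequality $|l-2k|\le l$ forces $l\ge k$, giving (ii) (resp.\ (iii)).
\item For type I with index $1$ (resp.\ $2$) in the role of ``$3$'': writing $\lambda_1=-\rho-l$ and $\lambda_2-\lambda_3=m$, the plane equation gives $m=2\lambda_2+2k-l$; the parity condition forces $\lambda_2\in\mathbb Z$, and $|m|\le l$ becomes $-k\le\lambda_2\le l-k$. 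This is exactly (iv) (resp.\ (v)).
\item For type I with index $3$ in the role of ``$3$'': the plane equation gives $l=2k+\lambda_1+\lambda_2$ and $m=\lambda_1-\lambda_2$; the parity $l\equiv m\pmod 2$ forces $\lambda_1,\lambda_2\in\mathbb Z$, and $|m|\le l$ with $l\ge 0$ boils down to $\lambda_1,\lambda_2\ge -k$, i.e.\ $\lambda_1=-k+p,\;\lambda_2=-k+q$ with $p,q\in\mathbb N$.
\end{itemize}

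The only point worth flagging is this last case. It does not force the bound $p+q\le k$ appearing in (vi); rather, the complementary part $p+q>k$ automatically satisfies $\lambda_1+\lambda_2=-2k+p+q\in -k+\mathbb N$, so it is already absorbed into (i). Thus (i) together with (vi) (as stated) exactly accounts for the case $(\text{I},3)$ on $\mathcal H_k$. Conversely, each of (i)--(vi) implies membership in $Z$ by reading the above substitutions backwards and invoking Theorem \ref{ZK}, so the biconditional is established.

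The argument is essentially a book-keeping exercise, and the only mild subtleties are (a) keeping track of the parity condition $l\equiv m\pmod 2$, which in each case reduces to integrality of an affine expression in $\lambda_1,\lambda_2$, and (b) noticing the overlap between case $(\text{I},3)$ and condition (i) mentioned above, which explains the cutoff $p+q\le k$ in (vi). No new analytic input beyond Theorem \ref{ZK} is required.
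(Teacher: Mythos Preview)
Your argument is correct and is precisely the elementary translation of Theorem \ref{ZK} to the plane $\mathcal H_k$ that the paper has in mind; the paper itself omits the proof entirely (``The proof is elementary and we omit it''). Your observation that the case $(\mathrm{I},3)$ with $p+q>k$ is absorbed into condition (i), which explains the cutoff $p+q\le k$ in (vi), is exactly the right bookkeeping point, and nothing further is needed.
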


The proof is elementary and we omit it. Let
 \[Z_k = \{(\lambda_1,\lambda_2)\in \mathbb C^2, (\lambda_1,\lambda_2, -\rho-2k -\lambda_1-\lambda_2)\in Z\}\ .\]
The elements of $Z_k$ have to satisfy (at least) one of the properties $i)$ through $vi)$.

To finish the proof of Theorem \ref{mult1II}, it remains to prove the following result, which will be done in the next subsection.
\begin{theorem}\label{Zkuniq0}
 Let $(\lambda_1,\lambda_2) \in \mathbb C^2, (\lambda_1,\lambda_2) \notin Z_k$, where $k\in \mathbb N$. Then 
\[\dim {\mathcal BD}_G(\lambda_1,\lambda_2;k) = 1\ .
\]
\end{theorem}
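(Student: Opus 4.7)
The plan combines existence (obtained for free from $\widetilde{\mathcal{K}}^{\boldsymbol{\lambda}}$) with a uniqueness argument based on $K$-equivariant symbol calculus on the diagonal $\mathcal{O}_4$. For \emph{existence}, set $\lambda_3 = -\rho - 2k - \lambda_1 - \lambda_2$; in geometric parameters this reads $\alpha_1+\alpha_2+\alpha_3 = -2(n-1)-2k$, so $\boldsymbol{\lambda}$ is a pole of type II, and by definition of $Z_k$ it satisfies $\boldsymbol{\lambda}\notin Z$. Hence $\widetilde{\mathcal{K}}^{\boldsymbol{\lambda}}\neq 0$, and by Proposition \ref{suppK}(iv) its support lies in $\mathcal{O}_4$. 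Lemma \ref{lemmabidiff} then produces a non-zero element of $\mathcal{BD}_G(\lambda_1,\lambda_2;k)$, giving $\dim\geq 1$.

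For \emph{uniqueness}, let $B\in \mathcal{BD}_G(\lambda_1,\lambda_2;k)$. Since $K$ acts transitively on $S$ and $\pi_\lambda\vert_K$ is independent of $\lambda$, $B$ is determined by its bilinear jet at the base point $\mathbf{1}$. The dilation $a_t\in P$ fixes $\mathbf{1}$ and acts by $e^{-t}\mathrm{Id}$ on $T_\mathbf{1}S$, so covariance under $a_t$ (matching the transverse order $2k$ along $\mathcal{O}_4$, computed in \cite{bc}) forces this jet to be a bi-differential operator of exact total order $2k$. Invariance under the isotropy $K_0 = K\cap P\simeq SO(n-1)$, combined with the first fundamental theorem (valid for $n-1\geq 3$, as in Lemma \ref{Kgen}), then forces the principal symbol at $\mathbf{1}$ to be a polynomial in the three scalar invariants $\vert\xi_1\vert^2,\vert\xi_2\vert^2,\langle\xi_1,\xi_2\rangle$:
\[\sigma(B)=\sum_{a+b+c=k} C_{a,b,c}\,\vert\xi_1\vert^{2a}\vert\xi_2\vert^{2b}\langle\xi_1,\xi_2\rangle^c.\]

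The remaining $\mathfrak{p}$-covariance at $\mathbf{1}$ (for $X_u$ with $u\perp \mathbf{1}$) translates, via the infinitesimal intertwining equation and formula \eqref{dpi}, into an explicit linear recurrence on the coefficients $(C_{a,b,c})$. After Fourier transform in the transverse direction (the F-method of Kobayashi--Pevzner), the transition coefficients are affine linear factors in $(\lambda_1,\lambda_2)$ of the form $\lambda_j+\rho+\text{integer}$, $\lambda_1+\lambda_2+\text{integer}$, and $-k+\text{integer}$, reminiscent of the Pochhammer symbols in \eqref{Kplambda}. Propagating from (say) the leading coefficient $C_{k,0,0}$, one determines every other $C_{a,b,c}$ uniquely provided none of these linear factors vanishes along the propagation path. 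A direct inspection shows these degeneracies correspond exactly to the six cases (i)--(vi) listed in Lemma \ref{charZk}, so for $(\lambda_1,\lambda_2)\notin Z_k$ the recurrence has one-dimensional solution space, yielding $\dim \leq 1$.

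The main obstacle is the exact matching between the recurrence's degeneracy loci and the six cases of $Z_k$ --- particularly case (vi) (internal double vanishing) and cases (iv)--(v), where two factors vanish simultaneously and one must verify that a new independent solution really appears. A cleaner alternative I would pursue in parallel is the Verma-module reformulation: via Frobenius reciprocity, $\mathcal{BD}_G(\lambda_1,\lambda_2;k)$ is dual to $\mathrm{Hom}_{\mathcal{O}^{\mathfrak{p}}}\!\bigl(M_P^\mathfrak{g}(\lambda_1)\otimes M_P^\mathfrak{g}(\lambda_2),\, M_P^\mathfrak{g}(-\lambda_3)\bigr)$, and $Z_k$ should emerge intrinsically from Shapovalov/Jantzen irreducibility criteria for these generalized Verma modules.
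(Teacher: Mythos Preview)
Your outline reduces the problem to a linear recurrence on the symbol coefficients, which is exactly what the paper does (after passing to the noncompact picture and citing Ovsienko--Redou for the form $D=\sum_{r+t\le k} c_{r,t}\,\Delta_{xx}^r\Delta_{xy}^{k-r-t}\Delta_{yy}^t$ and the explicit system~$(\mathcal S)$). So the strategy is the right one.

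The real gap is in the sentence ``A direct inspection shows these degeneracies correspond exactly to the six cases (i)--(vi).'' That is not what happens. The coefficients in the recurrence (in the paper's notation, the factors $(r+1+\lambda_1)$, $(t+1+\lambda_2)$, $(k-r+t-1+\rho+\lambda_2)$, $(k+r-t-1+\rho+\lambda_1)$) vanish whenever $\lambda_1$ or $\lambda_2$ lies in $\{-1,\dots,-k\}\cup\{-\rho,\dots,-\rho-(k-1)\}$, which is a \emph{strictly larger} set than $Z_k$. In other words, for many $(\lambda_1,\lambda_2)\notin Z_k$ the naive propagation from a single seed coefficient breaks down: some transition coefficients are zero, and one cannot simply ``determine every other $C_{a,b,c}$ uniquely'' from $C_{k,0,0}$. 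The content of the theorem is precisely that in these degenerate-but-not-in-$Z_k$ cases the solution space is \emph{still} one-dimensional. The paper proves this by a case-by-case analysis (its Proposition~\ref{Zkuniq} and following): in each case one first shows that a block of the $c_{r,t}$ is forced to vanish, and then chooses a different principal unknown (for instance $c_{p,q}$ rather than $c_{0,0}$ when $\lambda_1=-p$, $\lambda_2=-q$) from which the remaining coefficients can be propagated. The choice of seed and the propagation path depend on which case one is in, and verifying that the relevant coefficients along the new path are nonzero uses the specific inequalities encoded in ``$(\lambda_1,\lambda_2)\notin Z_k$'' (e.g.\ $p+q<k$ in case~(vi)). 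You have correctly identified this as ``the main obstacle,'' but it is not a matter of inspection; it is the bulk of the proof, and neither your symbol-calculus sketch nor the Verma-module reformulation bypasses it.
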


\subsection {Multiplicity one for bi-differential operators}

For the first time in this article, we will use the \emph{noncompact picture}. Let $F\simeq \mathbb R^{n-1}$ be the tangent space of $S$ at the point $ \bf 1$. The stereographic projection is a diffeomorphism from $S\setminus \{ -\mathbf 1\}$ onto $F$. The action of $G$ is transferred to a (rational) action of $G$ on $F$. The representations $\pi_{\lambda}$ can also be transferred, and in fact for the problem at hand, it is enough to consider the derivate of the representation, viewed as a representation of $\mathfrak g$. For $X\in \mathfrak g$, $d\pi_\lambda(X)$ acts by a first order differential operator with polynomial coefficients. A covariant bi-differential operator on $S\times S$ is transferred to a covariant bi-differential operator on $F\times F$ and vice versa. In our context, these operators have been studied in \cite{or}, to which we refer for more details. Choose coordinates on $F$ with respect to some orthogonal basis of $F$, and consider the bi-differential operators on $F\times F$ given by
\[\Delta_{x\,x} = \sum_{j=1}^{n-1} \frac{\partial ^2}{\partial x_j^2},\quad \Delta_{x\,y} = \sum_{j=1}^{n-1} \frac{\partial^2}{\partial x_j \partial y_j},\quad\Delta_{y\,y} = \sum_{j=1}^{n-1} \frac{\partial ^2}{\partial y_j^2}\ .
\]

Let $D$ be a bi-differential operator on $F\times F$ which is covariant w.r.t. $(\pi_{\lambda_1}\otimes \pi_{\lambda_2}, \pi_{\lambda_1+\lambda_2 +\rho+2k})$ for some $k\in \mathbb N$. As a first (and elementary) reduction of the problem, the operator $D$ has to be of the form

\begin{equation}\label{Deltabidiff}
D= \sum_{r+t\leq k} c_{r,t} \Delta^r_{x\,x} \Delta^s_{x\,y}\Delta^t_{y\,y}
\end{equation}
where $s=k-r-t$ and the $c_{r,t}$ are constants. 

The next propositions are two main results obtained in \cite{or}. Notice that in their notation $\dim F = n$, and their index $\lambda$ for the representation corresponds to $\frac{\lambda}{n-1}-\frac{1}{2}$ in our notation.

\begin{proposition} The bi-differential operator $D$ given by\eqref{Deltabidiff} is covariant w.r.t. $(\pi_{\lambda_1}\otimes \pi_{\lambda_2}, \pi_{\lambda_1+\lambda_2+\rho+2k})$ if and only if the $(c_{r,t})$ satisfy the  system $(\mathcal S)$ of homogeneous linear equations : 

\begin{equation*}
\begin{aligned}
4(r+1)(r+1+\lambda_1)c_{r+1,\,t}&\\ +2(k-r-t)(k-r+t-1+\rho+\lambda_2) c_{r,\,t} &\\ -(k-r-t+1)(k-r-t) c_{r,\,t-1} &= 0
\end{aligned}
\qquad E^{(1)}_{r,t}
\end{equation*}
\begin{equation*}
\begin{aligned}
4(t+1)(t+1+\lambda_2)c_{r,\,t+1}&\\ +2(k-r-t)(k+r-t-1+\rho+\lambda_1) c_{r,\,t}&\\ -(k-r-t+1)(k-r-t) c_{r-1,\,t}&=0
\end{aligned}
\qquad E^{(2)}_{r,t}
\end{equation*}
for $r+t\leq k$.
\end{proposition}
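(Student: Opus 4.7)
First I would reformulate the covariance requirement at the Lie algebra level: $D$ is covariant with respect to $(\pi_{\lambda_1}\otimes\pi_{\lambda_2},\pi_{\lambda_1+\lambda_2+\rho+2k})$ if and only if
\[
D\circ\bigl(d\pi_{\lambda_1}(X)\otimes \Id+\Id\otimes d\pi_{\lambda_2}(X)\bigr)
=d\pi_{\lambda_1+\lambda_2+\rho+2k}(X)\circ D
\]
for every $X\in\mathfrak g$. Working in the noncompact picture, I would invoke the Iwasawa decomposition $\mathfrak g=\mathfrak n\oplus\mathfrak a\oplus\overline{\mathfrak n}$ and reduce to checking the identity on generators of the three factors.

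For $X\in\mathfrak n$, $d\pi_\lambda(X)$ is a directional derivative on $F$, independent of $\lambda$, which commutes with each of $\Delta_{x\,x}$, $\Delta_{x\,y}$, $\Delta_{y\,y}$; the translation intertwining is therefore automatic. For the generator $H$ of $\mathfrak a$, $d\pi_\lambda(H)$ is the Euler field on $F$ shifted by a constant proportional to $\lambda+\rho$, and each $\Delta$-operator is homogeneous of scaling weight $-2$, so that $\Delta_{x\,x}^r\Delta_{x\,y}^s\Delta_{y\,y}^t$ has weight $-2(r+s+t)$. The $\mathfrak a$-covariance is equivalent to matching this weight with the spectral gap $(\lambda_1+\lambda_2+\rho+2k+\rho)-(\lambda_1+\rho)-(\lambda_2+\rho)=2k$; this is exactly the constraint $r+s+t=k$ already built into \eqref{Deltabidiff} and produces no relations on $(c_{r,t})$.

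The content of the proposition is therefore the intertwining with $\overline{\mathfrak n}$. Since both the three $\Delta$-operators and the ansatz \eqref{Deltabidiff} are manifestly $K$-equivariant, it suffices to test the identity against a single basis vector $X_1$ of $\overline{\mathfrak n}$ corresponding to $e_1$. The operator $d\pi_\lambda(X_1)$ is the classical conformal vector field: a first order operator with polynomial coefficients of degree $\leq 2$, whose dependence on $\lambda$ enters linearly through a multiplicative term proportional to $(\lambda+\rho)x_1$. The commutator of $d\pi_{\lambda_1}(X_1)\otimes\Id+\Id\otimes d\pi_{\lambda_2}(X_1)$ with $\Delta_{x\,x}^r\Delta_{x\,y}^s\Delta_{y\,y}^t$ is computed by iterated Leibniz from the three elementary commutators $[d\pi_\lambda(X_1),\Delta_{x\,x}]$, $[d\pi_\lambda(X_1),\Delta_{x\,y}]$, $[d\pi_\lambda(X_1),\Delta_{y\,y}]$, each of which is, up to a scalar, a sum of first order operators times a single $\Delta$. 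Expanding the intertwining identity and matching coefficients of the basis of monomials $\Delta_{x\,x}^{r'}\Delta_{x\,y}^{s'}\Delta_{y\,y}^{t'}$ (with $r'+s'+t'=k-1$) times $x$- or $y$-derivatives produces the two recursions $E^{(1)}_{r,t}$ (from $X_1$ acting on the $x$-slot) and $E^{(2)}_{r,t}$ (from $X_1$ acting on the $y$-slot); the two families are interchanged by the involution $x\leftrightarrow y$, $(r,\lambda_1)\leftrightarrow(t,\lambda_2)$, $c_{r,t}\leftrightarrow c_{t,r}$, reflecting the obvious symmetry of the tensor product factor.

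The main obstacle is the explicit commutator bookkeeping. The quadratic factor $(r+1)(r+1+\lambda_1)$ in $E^{(1)}_{r,t}$ arises from the iterated Leibniz expansion of $[d\pi_{\lambda_1}(X_1),\Delta_{x\,x}^{r+1}]$ together with the $(\lambda_1+\rho)$-dependent multiplicative piece; the cross term $(k-r-t)(k-r+t-1+\rho+\lambda_2)c_{r,t}$ comes from the multiplicative piece of $d\pi_{\lambda_2}(X_1)$ propagating across $\Delta_{x\,y}^s$; the last term $(k-r-t+1)(k-r-t)c_{r,t-1}$ comes from two derivatives of $\Delta_{x\,y}^s$ produced by the principal part of $d\pi_{\lambda_2}(X_1)$, which converts a pair of mixed Laplacians into a $\Delta_{y\,y}$. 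Assembling these pieces without algebraic slip is the sole technical step; once the two families $E^{(1)}_{r,t}$ and $E^{(2)}_{r,t}$ are in hand, the covariance of $D$ is equivalent to their joint system $(\mathcal S)$, which is the content of the proposition.
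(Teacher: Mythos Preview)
The paper does not give its own proof of this proposition: it is quoted from Ovsienko--Redou \cite{or} (with the dictionary on parameters indicated just before the statement), so there is no paper argument to compare against in detail. Your plan is the standard one and is essentially what \cite{or} do: pass to the infinitesimal intertwining relation, observe that the $\mathfrak n$- and $\mathfrak a$-parts (translations and dilations) are automatic for the ansatz \eqref{Deltabidiff}, and derive the recursions from the special conformal generators in $\overline{\mathfrak n}$.

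Two small corrections. First, the decomposition you invoke is not the Iwasawa decomposition; what you want is the triangular (Langlands/Harish--Chandra) decomposition $\mathfrak g=\overline{\mathfrak n}\oplus(\mathfrak m\oplus\mathfrak a)\oplus\mathfrak n$, and for $n\geq 3$ the factor $\mathfrak m\simeq\mathfrak{so}(n-1)$ is nonzero. You do cover this implicitly when you invoke $K$-equivariance of the $\Delta$-operators (which gives $M$-covariance for free and lets you reduce the $\overline{\mathfrak n}$-check to a single vector), but the sentence as written is inaccurate. Second, your attribution of the three coefficients in $E^{(1)}_{r,t}$ to specific pieces of the commutator is heuristically right but slightly loose: all three terms come from commuting the single generator of $\overline{\mathfrak n}$ (acting diagonally) through $\Delta_{x\,x}^r\Delta_{x\,y}^s\Delta_{y\,y}^t$ and then re-indexing in $(r,t)$; the split into ``$x$-slot'' versus ``$y$-slot'' contributions is what separates $E^{(1)}$ from $E^{(2)}$, not what separates the three summands within each equation. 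None of this affects the validity of the approach.
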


\begin{proposition} Let $\lambda_1, \lambda_2$ such that 
 \[ \lambda_1,\lambda_2 \notin \{ -\rho,-\rho-1,\dots, -\rho-(k-1)\} \cup \{ -1,-2,\dots, -k\}\ .\]
 Then the system $(\mathcal S)$ has, up to a constant, a unique non trivial solution.
 \end{proposition}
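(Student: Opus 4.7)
I would prove the statement by separately establishing uniqueness (the solution space has dimension at most $1$) and existence (it has dimension at least $1$), exploiting the recurrence structure of $(\mathcal S)$.

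For uniqueness, the hypothesis $\lambda_1,\lambda_2 \notin \{-1,-2,\dots,-k\}$ ensures that the leading coefficients $4(r+1)(r+1+\lambda_1)$ in $E^{(1)}_{r,t}$ and $4(t+1)(t+1+\lambda_2)$ in $E^{(2)}_{r,t}$ are nonzero for all $0 \leq r, t \leq k-1$. Setting $c_{r,t} = 0$ whenever $r<0$, $t<0$, or $r+t>k$, I would first apply $E^{(1)}_{r,0}$ for $r=0,1,\dots,k-1$ to propagate along the $r$-axis, expressing $c_{1,0}, c_{2,0}, \dots, c_{k,0}$ as explicit scalar multiples of $c_{0,0}$. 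Then, for each fixed $r \in \{0,1,\dots,k-1\}$, the equations $E^{(2)}_{r,t}$ with $t = 0,1,\dots,k-r-1$ successively determine $c_{r,1}, c_{r,2}, \dots, c_{r,k-r}$ (each step using the already-computed $c_{r,t}$ and $c_{r-1,t}$). Every $c_{r,t}$ in the triangle $r+t \leq k$ is thereby expressed as a determined multiple of $c_{0,0}$, so the solution space has dimension at most one.

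For existence, the equations not used in the propagation above --- namely $E^{(1)}_{r,t}$ for $r+t \leq k-1$ with $t \geq 1$, a total of $\binom{k}{2}$ equations --- must hold automatically for the values of $c_{r,t}$ just computed. There are two natural routes. The algebraic route verifies these consistency conditions by a double induction on $r+t$: the equality of the two ways of producing $c_{r+1,t}$ (via $E^{(1)}_{r,t}$ using $c_{r,t}$ and $c_{r,t-1}$, versus via $E^{(2)}_{r+1,t-1}$ chained with previously-verified equations) reduces, after clearing denominators, to a polynomial identity in $(\lambda_1,\lambda_2)$. The role of the exclusion $\lambda_1,\lambda_2 \notin \{-\rho,\dots,-\rho-(k-1)\}$ is precisely to keep factors such as $(k-r+t-1+\rho+\lambda_2)$ and $(k+r-t-1+\rho+\lambda_1)$ from vanishing at intermediate stages and breaking the induction. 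The geometric route quotes the main body of the paper: for $\boldsymbol\lambda = (\lambda_1, \lambda_2, -\rho-2k-\lambda_1-\lambda_2) \in \mathcal H_k$ lying outside $Z$, Proposition \ref{suppK} says that $\widetilde{\mathcal K}^{\boldsymbol\lambda}$ is a nonzero $\boldsymbol\lambda$-invariant distribution supported in $\mathcal O_4$, and Lemma \ref{lemmabidiff} then converts it into a nonzero element of $\mathcal B\mathcal D_G(\lambda_1,\lambda_2;k)$, giving the required nontrivial solution of $(\mathcal S)$. The remaining parameters satisfying the Proposition's hypothesis are handled by analytic continuation, the $c_{r,t}$ being rational functions of $(\lambda_1,\lambda_2)$ with poles confined to the excluded sets.

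The main obstacle is the existence step: uniqueness is essentially bookkeeping once the recursions are solvable, whereas existence asks that an overdetermined system be compatible. Isolating the precise role of the extra exclusion $\{-\rho,\dots,-\rho-(k-1)\}$, which plays no part in uniqueness, is where the real work lies. I would ultimately hope for an explicit closed form for $c_{r,t}$ in terms of Pochhammer symbols (as suggested by the structure of the recursions) to make both the uniqueness formula and the consistency verification transparent.
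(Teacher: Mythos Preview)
The paper does not give its own proof of this proposition: it is quoted as one of ``two main results obtained in \cite{or}'' (Ovsienko--Redou), stated without argument. The paper's own work in this subsection (Proposition~\ref{Zkuniq} and the case-by-case analysis following it) is precisely the \emph{extension} of the uniqueness claim to the exceptional parameters excluded by the hypothesis here. So there is nothing in the paper to compare your proposal against directly.

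That said, your proposal is a sound sketch and its uniqueness half is exactly in the spirit of the paper's later arguments: choose a principal unknown (here $c_{0,0}$) and propagate via the recursions, which goes through because the leading coefficients $4(r{+}1)(r{+}1{+}\lambda_1)$ and $4(t{+}1)(t{+}1{+}\lambda_2)$ are nonzero under the hypothesis $\lambda_1,\lambda_2\notin\{-1,\dots,-k\}$. For existence, your geometric route via $\widetilde{\mathcal K}^{\boldsymbol\lambda}$ plus analytic continuation is cleaner than the algebraic verification and is well adapted to the paper's framework; one small caution is that the proposition's hypothesis does \emph{not} by itself force $\boldsymbol\lambda\notin Z$ (e.g.\ condition~$i)$ of Lemma~\ref{charZk}, $\lambda_1+\lambda_2\in -k+\mathbb N$, can hold with otherwise generic $\lambda_1,\lambda_2$), so the analytic-continuation step is genuinely required, as you acknowledge. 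Your remark that the exclusion of $\{-\rho,\dots,-\rho-(k-1)\}$ is needed only for the algebraic consistency check, not for the geometric-plus-continuation argument, is also correct: the propagated $c_{r,t}$ are rational in $(\lambda_1,\lambda_2)$ with denominators coming only from the factors $(r{+}1{+}\lambda_1)$ and $(t{+}1{+}\lambda_2)$.
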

 
 Hence, the proof of Theorem  \ref{Zkuniq0} is reduced to the following  statement.
 
 \begin{proposition}\label{Zkuniq}
  Let $\lambda_1,\lambda_2\in \mathbb C$ such that
 $\lambda_1$ or $\lambda_2$ belongs to  $\{ -\rho,-\rho-1,\dots, -\rho-(k-1)\} \cup \{ -1,-2,\dots, -k\}$, but $\quad (\lambda_1,\lambda_2)\notin Z_k$.
 Then the system $(\mathcal S)$ has, up to a constant a unique solution.
 \end{proposition}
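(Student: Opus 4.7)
My plan is to establish the existence and uniqueness of the non-trivial solution to $(\mathcal S)$ separately. For existence, the global machinery already developed in the paper suffices: under the hypothesis $(\lambda_1,\lambda_2)\notin Z_k$, the associated triple $\boldsymbol\lambda=(\lambda_1,\lambda_2,-\rho-2k-\lambda_1-\lambda_2)$ lies in $\mathbb C^3\setminus Z$ and is a pole of type II (possibly also of type I+II). By Proposition \ref{suppK}, $\widetilde{\mathcal K}^{\boldsymbol\lambda}$ is a non-zero $\boldsymbol\lambda$-invariant distribution supported in $\mathcal O_4$, and Lemma \ref{lemmabidiff} converts this into a non-zero bi-differential operator in $\mathcal{BD}_G(\lambda_1,\lambda_2;k)$, which through the representation \eqref{Deltabidiff} yields a non-trivial solution of $(\mathcal S)$.

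For uniqueness, I will analyse $(\mathcal S)$ by propagating a given solution $(c_{r,t})$ outward from $c_{0,0}$. By the symmetry exchanging indices $1$ and $2$, we may assume $\lambda_1$ lies in the exceptional set $\Lambda_k=\{-\rho,\ldots,-\rho-(k-1)\}\cup\{-1,\ldots,-k\}$. The axis equations $E^{(1)}_{r,0}$ and $E^{(2)}_{0,t}$ (with the conventions $c_{r,-1}=c_{-1,t}=0$) determine the coefficients along the two axes as long as the leading coefficients $4(r+1)(r+1+\lambda_1)$ and $4(t+1)(t+1+\lambda_2)$ are non-zero, and the interior is then recovered column by column. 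When $\lambda_1=-m$ with $m\in\{1,\ldots,k\}$, the equation $E^{(1)}_{m-1,0}$ loses its leading coefficient and reduces to $2(k-m+1)(k-m+\rho+\lambda_2)\,c_{m-1,0}=0$. The exclusion of configuration (v) in Lemma \ref{charZk} (which would require $\lambda_2=-\rho-(k-m)$ to place $(\lambda_1,\lambda_2)$ inside $Z_k$) forces the middle factor to be non-zero, so $c_{m-1,0}=0$ and by back-substitution $c_{r,0}=0$ for all $r<m$. The coefficient $c_{m,0}$ momentarily appears as a second free parameter, but the compatibility relations extracted from $E^{(1)}_{m-1,t}$ for $t\geq 1$, combined with the exclusions of configurations (i), (iv), and (vi) of Lemma \ref{charZk}, pin $c_{m,0}$ down as a non-zero multiple of $c_{0,0}$. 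The companion case $\lambda_1=-\rho-j$ with $j\in\{0,\ldots,k-1\}$ is treated in parallel: the leading coefficients remain non-zero, but the middle coefficient $2(k-r-t)(k+r-t-1+\rho+\lambda_1)$ in $E^{(2)}$ vanishes along the line $r-t=j+1-k$, with conditions (ii) and (iv) of Lemma \ref{charZk} taking over the role played by (v).

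The main obstacle will be the combinatorial bookkeeping when both $\lambda_1$ and $\lambda_2$ simultaneously lie in $\Lambda_k$, so that several leading and middle coefficients of $(\mathcal S)$ vanish at once and the recurrence acquires multiple potential branch points. The core of the argument is the verification that conditions (i)--(vi) of Lemma \ref{charZk} form an \emph{exhaustive} list of configurations in which a genuine second independent solution of $(\mathcal S)$ emerges. A cleaner alternative would be to clear all $\lambda$-denominators in the generic solution constructed in \cite{or} so as to produce a polynomial family $(\tilde c_{r,t}(\lambda_1,\lambda_2))$ that solves $(\mathcal S)$ identically on $\mathbb C^2$, and then reduce the non-uniqueness question at a degenerate point to the non-vanishing of an explicit product of Pochhammer-type factors --- a statement one would hope to recognize as precisely the condition $(\lambda_1,\lambda_2)\notin Z_k$.
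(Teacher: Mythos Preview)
Your existence argument via $\widetilde{\mathcal K}^{\boldsymbol\lambda}$ is fine and indeed implicit in the paper.

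The uniqueness argument, however, contains a genuine error in the case $\lambda_1=-m$ with $1\le m\le k$. You correctly derive $c_{m-1,0}=0$ from $E^{(1)}_{m-1,0}$, and your back-substitution correctly gives $c_{r,0}=0$ for all $r<m$. But this list \emph{includes} $c_{0,0}$ itself (since $m\ge 1$), so $c_{0,0}=0$. Your subsequent claim that ``compatibility relations extracted from $E^{(1)}_{m-1,t}$ for $t\ge 1$ \ldots\ pin $c_{m,0}$ down as a non-zero multiple of $c_{0,0}$'' is therefore self-defeating: it would force $c_{m,0}=0$ and hence the zero solution, contradicting your own existence part. Worse, the equations $E^{(1)}_{m-1,t}$ cannot produce any relation involving $c_{m,0}$ at all, because the leading coefficient $4m(m+\lambda_1)=0$ kills the $c_{m,t}$ term in every one of them; those equations only relate $c_{m-1,t}$ to $c_{m-1,t-1}$.

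The correct mechanism (the paper's) is different: one shows that \emph{all} $c_{r,t}$ with $r\le m-1$ vanish (not just those on the $t=0$ axis), and then $c_{m,0}$ --- not $c_{0,0}$ --- is taken as the principal unknown from which the remaining coefficients $c_{r,t}$ with $r\ge m$ are computed. In the double-integer case $\lambda_1=-p,\ \lambda_2=-q$ (the paper's ``worst case''), the vanishing region is $\{r\le p-1\}\cup\{t\le q-1\}$ and the principal unknown is $c_{p,q}$. Your plan of always anchoring at $c_{0,0}$ does work in the case $\lambda_1=-\rho-j$ (there the vanishing region lies at large $t$, so $c_{0,0}$ survives), but it fails exactly when $\lambda_1\in\{-1,\dots,-k\}$. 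You have also left the doubly exceptional cases as an acknowledged ``main obstacle''; in the paper this is precisely the case treated in full (with the $Z_k$ exclusions, notably $(vi)$: $p+q>k$, used to keep the needed coefficients non-zero), and the mixed case $\lambda_1\in\{-\rho,\dots,-\rho-(k-1)\}$, $\lambda_2\in\{-1,\dots,-k\}$ is handled by exploiting the $\mathfrak S_3$-symmetry of $(\lambda_1,\lambda_2,\lambda_3)$ to swap in $\lambda_3$.

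Your closing alternative (clearing denominators in the generic solution of \cite{or} and matching the non-vanishing locus of an explicit product against $Z_k$) is a plausible route, but as stated it is a hope, not an argument.
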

The proof of Proposition \ref{Zkuniq} is a case-by-case proof. As the arguments are of the same type in each case, we have chosen to  present the worst case with full details, giving only  a sketch of proof in the others.

 \begin{proposition} Let $\lambda_1= - p,\lambda_2= -q$, where $1\leq p,q\leq  k$, and assume that $(\lambda_1,\lambda_2)\notin Z_k$. Then the system $\mathcal (S)$ has a unique solution, up to a constant.
 \end{proposition}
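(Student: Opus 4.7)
The system $(\mathcal{S})$ is a pair of three-term recursions $E^{(1)}_{r,t}$ and $E^{(2)}_{r,t}$ indexed by $(r,t)$ with $r+t\le k$. With $\lambda_1=-p$, the leading coefficient $4(r+1)(r+1-p)$ of $c_{r+1,t}$ in $E^{(1)}_{r,t}$ vanishes exactly at $r=p-1$; similarly the coefficient $4(t+1)(t+1-q)$ of $c_{r,t+1}$ in $E^{(2)}_{r,t}$ vanishes at $t=q-1$. The factor $(k-r-t)$ appearing in the remaining coefficients vanishes on the hypotenuse $r+t=k$. My plan is to normalize $c_{0,0}=1$, use $E^{(1)}$ and $E^{(2)}$ to propagate throughout the triangle $\{(r,t):r+t\le k\}$ away from these "walls", and show that the equations sitting on each wall reduce to compatibility conditions whose satisfaction is controlled precisely by the hypothesis $(\lambda_1,\lambda_2)\notin Z_k$.

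In the sub-rectangle $\{0\le r\le p-1,\,0\le t\le q-1\}$ (non-empty since $p,q\ge 1$) both recursions are non-degenerate and determine $c_{r+1,t}$ and $c_{r,t+1}$ from data at strictly smaller indices; since the two equations of $(\mathcal{S})$ arise from one and the same covariance requirement, the two propagations commute there, and the whole sub-rectangle is fixed by $c_{0,0}$. The propagation is then extended into the three remaining sub-triangles cut out by the walls by using whichever recursion is still non-degenerate across each wall (for instance $E^{(2)}$ to cross the wall $r=p-1$ and $E^{(1)}$ to cross $t=q-1$), and the hypotenuse $r+t=k$ is handled by observing that there the $(k-r-t)$ factor kills the third term and the recursion collapses to a two-term relation.

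The crux is the compatibility on the two walls. Setting $r=p-1$ in $E^{(1)}_{r,t}$ removes the $c_{p,t}$ term and leaves
\[
2(k-p+1-t)\bigl(k-p+t+\rho-q\bigr)\,c_{p-1,t}\;=\;(k-p+2-t)(k-p+1-t)\,c_{p-1,t-1},
\]
a relation among values of $c_{p-1,\cdot}$ already computed by $E^{(2)}$; an analogous relation is produced at $t=q-1$ in $E^{(2)}$. A direct manipulation with Pochhammer symbols shows that these constraints fail if and only if one of the factors $(k-p+t+\rho-q)$ for $0\le t\le\min(q-1,k-p)$ or $(k-q+r+\rho-p)$ for $0\le r\le\min(p-1,k-q)$ vanishes, or a propagation factor $(p-s)$ or $(q-s)$ forces a forbidden jump. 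Matching these vanishing loci against the cases $(i)$--$(vi)$ of Lemma~\ref{charZk} shows that each of them corresponds to one (and only one) way in which $(\lambda_1,\lambda_2)$ can lie in $Z_k$; thus the hypothesis $(\lambda_1,\lambda_2)\notin Z_k$ is exactly what is needed for every wall equation to be automatically satisfied.

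The main obstacle is this bookkeeping: the case-by-case matching between the degenerate loci of $(\mathcal{S})$ and the six conditions of Lemma~\ref{charZk} is elementary but delicate, since walls can meet (for instance at the corner $r=p-1,\,t=q-1$) and the hypotenuse $r+t=k$ interacts with both walls. Once the matching is carried out, uniqueness is immediate (every $c_{r,t}$ is a fixed rational multiple of $c_{0,0}$, with no residual constraint), and existence follows by exhibiting the explicit solution produced by the recursion, which is non-trivial because the normalization $c_{0,0}=1$ is compatible with every wall equation under the hypothesis.
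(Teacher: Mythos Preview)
Your normalization $c_{0,0}=1$ is incompatible with the system, so the whole scheme collapses at the first wall equation. Take $E^{(1)}_{p-1,0}$: the coefficient of $c_{p,0}$ vanishes, there is no $c_{p-1,-1}$ term, and what remains is
\[
2(k-p+1)\,(k-p+\rho-q)\,c_{p-1,0}=0.
\]
Under the hypothesis $(\lambda_1,\lambda_2)\notin Z_k$, condition $vi)$ of Lemma~\ref{charZk} fails, which forces $p+q<k$; together with $n\ge 4$ this makes both factors $k-p+1$ and $k-p+\rho-q$ nonzero. Hence $c_{p-1,0}=0$. But your propagation from $c_{0,0}$ to $c_{p-1,0}$ along the bottom row uses $E^{(1)}_{r-1,0}$ for $1\le r\le p-1$, and each step multiplies by the nonzero factor $-\dfrac{2(k-r+1)(k-r+\rho-q)}{4r(r-p)}$. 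So $c_{p-1,0}$ is a nonzero multiple of $c_{0,0}$, and the wall equation forces $c_{0,0}=0$, not $c_{0,0}=1$.

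In other words, the role of the hypothesis is the opposite of what you claim: it does not make the wall constraints ``automatically satisfied''; it makes their leading coefficients nonzero, turning them into genuine equations that kill the whole L-shaped region $\{r\le p-1\}\cup\{t\le q-1\}$. This is exactly what the paper proves first (a vanishing lemma), after which the correct free parameter is $c_{p,q}$, and the remaining coefficients $c_{r,t}$ with $r\ge p$, $t\ge q$ are computed from it by the recursions whose coefficients $4(r+1)(r+1-p)$ and $4(t+1)(t+1-q)$ are now nonzero. Your ``compatibility matching'' with the six cases of Lemma~\ref{charZk} is therefore aimed at the wrong target: what needs to be checked is not that the wall relations hold for the values propagated from $c_{0,0}$, but that the coefficients used to deduce the vanishing and then to propagate from $c_{p,q}$ are all nonzero.
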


\begin{proof} 
 
As  $vi)$ in Lemma \ref{charZk} is not satisfied,  $p+q< k$. This fact will reveal crucial for the non vanishing of some of the coefficients of the equations $E^{(1)}_{r,t}$ or $E^{(2)}_{r,t}$.
 
 \begin{lemma} Assume previous conditions are satisfied. Then
 \[c_{r,t} = 0 \quad \text {if}\quad  0\leq r\leq p-1\quad \text {or} \quad 0\leq t\leq q-1\ .
 \]
 \end{lemma}
 \begin{proof} \ 
  
 {\bf Step 1.}
 
 Equation $E^{(1)}_{p-1,0}$ reads
 \[2(k-p+1)(k-p+\rho-q) c_{p-1,0} + 0 = 0
 \]
 which implies $c_{p-1,0}=0$.
 
  {\bf Step 2.}

 Use equations $E^{(1)}_{r,0}$ successively for $r=p-2,\dots,0$ to compute $c_{r,0}$. Notice that the coefficient in the equation corresponding to the unknown $c_{r,0}$ is equal to
 \[2(k-r)(k-r-q+\rho-1)\neq 0\ .
 \]
 Hence $c_{r,0} = 0$ for $0\leq r\leq p-1$.
 
  {\bf Step 3.}

 For $1\leq t\leq q-1$, and $0\leq r \leq p-1$, use equation $E^{(2)}_{r,t-1}$ to compute $c_{r,t}$. Notice that the coefficient of the unknown in the equation is equal to $4t(t-q)\neq 0$. Let first $t=1$ to obtain $c_{r,1}=0$ for $0\leq r\leq p-1$, and then increment $t$ by $1$ up to $q-1$ to obtain  $c_{r,2} = 0$ for $0\leq r\leq p-1$, \dots, $c_{r,q-1} = 0$ for $0\leq r\leq p-1$.
  
 {\bf Step 4.}

For $r=p, p+1,\dots, k-q+1$, consider the equations $E^{(2)}_{r,q-1}$ which reduce to 
  \[2(k-r-q+1)(k+r-p-q+\rho) c_{r, q-1}+ (k-r-q)(k-r-q+1) c_{r-1,q-1} = 0\ ,
  \]   
and hence $c_{r,q-1}=0$ for $p\leq r\leq k-q+1$.
 
 {\bf Step 5.}
 
 For $p\leq r\leq k-q+1$ and $0\leq t\leq q-1$, use  equation $E^{(2)}_{r,t}$  to compute $c_{r,t}$ . Notice that  the coefficient of the unknown in the equation is equal to \[2(k-r-t) (k-t-p +r+\rho-1) \neq 0\ .\] Start with $t=q-2$ and let $r=p$ to conclude that $c_{p,q-2}=0$, then increment $r$ by $1$ up to $k-q+1$ to get $c_{r,q-2} = 0$. Then increment $t$ by $-1$, \dots, to conclude that $c_{r,t}=0$ if 
$p\leq r\leq k-q+1$ and $0\leq t\leq q-1$.

{\bf Step 6.}

For $r>k-q+1$ and $0\leq t\leq q-1$, use equation $E^{(1)}_{r-1,t}$ to compute $c_{r,t}$. Notice that the coefficient of the unknown in the equation is equal to $4r(r-p)$ and conclude that $c_{r,t}= 0$ for $r>k-q+1$ and $0\leq t\leq q-1$.

Summarizing what has been done, $c_{r,t} = 0$ for $t\leq q-1$ and arbitrary $r$. 
By exchanging the role of $r$ and $t$ (and of $p$ and $q$) follows 
$c_{r,t} = 0$ for $0\leq r\leq p-1$ and $t$ arbitrary.
 \end{proof}
 Now, choose $c_{p,q}$ as principal unknown and show that all the remaining unknowns (i.e. $c_{r,t}$ for $r\geq p, t\geq q$) can be expressed in terms of the principal unknown.
 
Let $r \geq p$. To determine $c_{r+1,q}$, use $E^{(1)}_{r,q}$ which, as $c_{r,q-1} = 0$ reads
 \[4(r+1)(r+1-p) c_{r+1,q}+ 2(k-r-q)(k-r-1+\rho) c_{r,q} = 0\ .
 \]
Starting with $r=p$ and incrementing $r$ by $1$ up to $k-q-1$ ,  the equations allow to compute all $c_{r,q}$ with $r\geq p+1$ in term of $c_{p,q}$.

Symmetrically, compute the unknowns $c_{p,t}, t\leq q+1$.

For $r> p$ and $t>q$, use equation $E^{(2)}_{r, t-1}$ to compute $c_{r,t}$. Notice that the coefficient of the unknown in the equation is equal to 
$4t(t-q)\neq 0$. Start with $r=p, t=q+1$ and increment $r$ by $1$. Then increment $t$ by $1$, and repeat the process over $r$ \dots, to effectively compute $c_{r,t}, r>p, t>q$. This achieves the proof of the proposition.
 \end{proof}
 
 \begin{proposition}\label{Zkuniq2}
  Let $\lambda_1 =-\rho-k_1, \lambda_2 = -\rho-k_2$, where $0\leq k_1,k_2\leq k-1$ and assume that $(\lambda_1,\lambda_2)\notin Z$. Then the system $(\mathcal S)$ has a unique non trivial solution, up to a constant.
 \end{proposition}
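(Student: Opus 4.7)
The plan is to mimic the case-by-case analysis carried out in the previous Proposition for $\lambda_1=-p,\lambda_2=-q$, after identifying where the coefficients of the recurrences $E^{(1)}_{r,t}$ and $E^{(2)}_{r,t}$ vanish under the new hypothesis $\lambda_1=-\rho-k_1$, $\lambda_2=-\rho-k_2$. A direct substitution shows that the four critical loci are $r=\rho+k_1-1$, $t=\rho+k_2-1$, $r-t=k-1-k_2$, and $t-r=k-1-k_1$; the last two are integer conditions regardless of $n$, while the first two fall on the integer lattice only when $n$ is odd. Together with the boundary face $\{r+t=k\}$, on which the factors $(k-r-t)$ automatically kill the leading coefficients, these lines partition the triangle $\{r+t\le k\}$ into regions where the recurrences have a definite character.

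First, I would propagate zeros from the boundary $r+t=k$ inward, using at each site whichever of $E^{(1)}_{r,t}$ or $E^{(2)}_{r,t}$ has an invertible leading coefficient. The analogue of Steps 1--6 in the previous proof then yields $c_{r,t}=0$ on the complement of a sub-rectangle (or sub-triangle) cut out by the critical lines listed above. The hypothesis $(\lambda_1,\lambda_2)\notin Z_k$, read through Lemma \ref{charZk}, is exactly what is needed to guarantee that these critical lines intersect the triangle in the ``good'' configuration, so that no additional zeros are forced beyond those predicted by the critical-line pattern.

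Second, I would single out a pivot index $(r_0,t_0)$ at the common corner of the two critical diagonals (or, in the $n$ odd sub-case, of the vertical and horizontal critical lines) and take $c_{r_0,t_0}$ as the free parameter. Walking along rows and columns out of $(r_0,t_0)$, equations $E^{(1)}$ determine every $c_{r,t}$ with $r>r_0$ in terms of $c_{r_0,t}$, and equations $E^{(2)}$ then determine every $c_{r,t}$ with $t>t_0$; one has to verify that the coefficient of the ``next'' unknown is nonzero throughout these walks, which is again precisely where the negations of conditions $i)$--$vi)$ of Lemma \ref{charZk} are used.

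The main obstacle is the bookkeeping forced by the case distinctions: one must split according to the parity of $n$ (to decide whether the vertical and horizontal critical lines meet the integer lattice at all) and, within each parity, according to the relative positions of $k_1,k_2$ and $\rho$ with respect to $k$ (so that the critical lines actually cross the triangle $\{r+t\le k\}$). Following the author's stated policy of writing out only the worst sub-case in full, I would treat in detail the generic configuration in which all four critical loci are interior to the triangle, and indicate briefly how the degenerate sub-cases reduce to the same staircase argument.
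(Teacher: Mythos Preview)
Your general framework---locate where the coefficients in $E^{(1)}$ and $E^{(2)}$ vanish, propagate zeros across part of the triangle, then solve the rest from a single pivot---matches the paper's, but the geometry you describe is off in ways that would make the argument break down as written.

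The two ``critical diagonals'' $r-t=k-1-k_2$ and $t-r=k-1-k_1$ are parallel (both have slope $1$ in the $(r,t)$-plane), so there is no ``common corner'' at which to place the pivot. Nor does the face $r+t=k$ serve as a source of zeros: on that face both the middle and the third terms of $E^{(1)}_{r,t}$ and $E^{(2)}_{r,t}$ already vanish, so those equations become trivial there and give no information to push inward.

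What the paper actually does is \emph{dual} to the $(-p,-q)$ case you are imitating. The seed zero comes from $E^{(1)}_{k-k_2-1,\,0}$: there the middle coefficient $2(k-r-t)(k-r+t-1-k_2)$ vanishes (because $r-t=k-1-k_2$), and the equation reduces to a nonzero multiple of $c_{k-k_2,\,0}$, the non-vanishing of that multiple being exactly the condition $\lambda_1=-\rho-k_1\notin\{-k,\dots,-k+k_2\}$, i.e.\ the negation of $v)$ in Lemma~\ref{charZk}. One then propagates to obtain $c_{r,t}=0$ for all $r\geq k-k_2$, and symmetrically for $t\geq k-k_1$ using the negation of $iv)$. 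The surviving region is the rectangle $0\leq r\leq k-k_2-1$, $0\leq t\leq k-k_1-1$, and the pivot is $c_{0,0}$, not an interior point. The parity of $n$ plays no role in this organization: the vertical and horizontal loci $r=\rho+k_1-1$, $t=\rho+k_2-1$ you emphasize are not what drives the argument; only the diagonal loci together with the exclusions $iv)$ and $v)$ of Lemma~\ref{charZk} are needed.
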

 \begin{proof}
 As property $ iv)$ of Lemma \ref{charZk} is not true, $\lambda_1 \notin \{ -k,-k+1,\dots, -k+k_2\}$ and similarly $\lambda_2 \notin \{ -k,-k+1,\dots, -k+k_1\}$. As in the previous case, this is crucial for the non vanishing of some of the coefficients of the equations. 
 
 First, for $r\geq k-k_2 \text{ or } t\geq k-k_1 ,\quad c_{r,t}=0$, which is proved with  arguments similar to those used in the previous case. Then choose $c_{0,0}$ as principal unknown, and compute the $c_{r,t}$ for $0\leq r\leq k-k_2-1, 0\leq t\leq k-k_1-1$ in terms of $c_{0,0}$.
\end{proof}

Now, assume $\lambda_1 = -\rho-k_1$, $\lambda_2 = -k_2$, with $0\leq k_1\leq k-1$ and $1\leq k_2\leq k$, and moreover $(\lambda_1,\lambda_2) \notin Z$. As $\lambda_1+\lambda_2+\lambda_3=-\rho-2k$, $\lambda_3 = -2k +k_1+k_2$. By Zk $iv)$, $-k+k_1 < -k_2$, and hence $\lambda_3 = -k-(k_1-k-l_2)\notin \{-k, -k+1,\dots,-1\}$. Because of the symmetry of our problem in $(\lambda_1,\lambda_2,\lambda_3)$, $(\lambda_1,\lambda_3)$ or $(\lambda_2,\lambda_3)$ do not belong to $Z_k$, and proving the uniqueness statement for $(\lambda_1, \lambda_2)$ or for $(\lambda_1,\lambda_3)$ or for $(\lambda_2,\lambda_3)$ is equivalent. So if $\lambda_3 \in \{ -\rho,-\rho-1,\dots,-\rho -(k-1)\}$, the uniqueness statement for $(\lambda_1,\lambda_3)$ is obtained by Proposition \ref{Zkuniq2}. Otherwise $\lambda_3\notin \{ -\rho,-\rho-1,\dots,-\rho -(k-1)\} \cup \{ -1,-2,\dots, -k_1\}$, the uniqueness for $(\lambda_1,\lambda_3)$ is part of the situations to be analyzed next.

Consider now the case where $\lambda_1 =-p$ with $1\leq p\leq k$, but $\lambda_2\notin \{ -\rho,-\rho-1,\dots, -\rho-(k-1)\} \cup \{ -1,-2,\dots, -k\}$. By the same argument used in first part $c_{r,t}=0$ if $r\leq p-1$. Then choose $c_{p,0}$ as principal unknown, and then compute the other $c_{r,t}$ for $r\leq p$ in terms of $c_{p,0}$ as done in the proof of Proposition \ref{Zkuniq}. 

In  the case where $\lambda_1 =-\rho-p$ for $0\leq p\leq k-1$ but $\lambda_2 \notin \{ -\rho,-\rho-1,\dots, -\rho-(k-1)\} \cup \{ -1,-2,\dots, -k\}$, first prove that $c_{0,t} = 0$ for $t\geq k-k_1$. Choose $c_{0,0}$ as principal unknown and then solve the system as done in Proposition \ref{Zkuniq2}. This achieves the proof Theorem \ref{Zkuniq0}.
\smallskip
\noindent

\subsection{Final remarks}

\begin{corollary}Let $\boldsymbol \lambda \in \mathbb C^3$ and assume that  the three representations $\pi_{\lambda_j}$ are irreducible. Then $\dim Tri(\boldsymbol \lambda) = 1$.
\end{corollary}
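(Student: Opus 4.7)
The plan is to reduce this corollary directly to Theorem \ref{mult1} via the explicit description of $Z$ given in Theorem \ref{ZK}. Recall from section 1.2 that $\pi_\lambda$ fails to be irreducible precisely when $\lambda\in (-\rho-\mathbb N)\cup(\rho+\mathbb N)$. Hence under the hypothesis that each $\pi_{\lambda_j}$ is irreducible, we have in particular
\[\lambda_j\notin -\rho-\mathbb N\quad\text{for } j=1,2,3.\]

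The key observation is that every point of $Z$ has at least one coordinate in $-\rho-\mathbb N$. Indeed, inspecting the two families \eqref{ZIlambda} and \eqref{ZIIlambda} that define $Z$ (taken up to permutation of the indices), both require $\lambda_j=-\rho-l$ for some $j\in\{1,2,3\}$ and some $l\in\mathbb N$. Thus the hypothesis of the corollary forces $\boldsymbol\lambda\notin Z$.

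The conclusion then follows immediately from Theorem \ref{mult1}, which gives $\dim Tri(\boldsymbol\lambda)=1$ for every $\boldsymbol\lambda\in\mathbb C^3\setminus Z$. There is no genuine obstacle here: the entire content is the characterization of $Z$ already established in Theorem \ref{ZK}, plus the explicit reducibility criterion for the scalar principal series recalled at the end of section 1.2. In particular, one does not need the (stronger) criterion $\lambda\notin(-\rho-\mathbb N)\cup(\rho+\mathbb N)$ in its full strength; only the inclusion $\lambda_j\notin -\rho-\mathbb N$ is used.
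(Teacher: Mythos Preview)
Your proof is correct and follows essentially the same approach as the paper: use the irreducibility criterion to get $\lambda_j\notin -\rho-\mathbb N$, observe via Theorem \ref{ZK} that this forces $\boldsymbol\lambda\notin Z$, and conclude by Theorem \ref{mult1}. Your version is simply more explicit about why $\boldsymbol\lambda\notin Z$ and correctly notes that only the half $\lambda_j\notin -\rho-\mathbb N$ of the irreducibility condition is actually used.
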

\begin{proof} The assumption of irreducibility amounts to
\[\lambda_j \notin (-\rho-\mathbb N)\cup(\rho+\mathbb N)
\]
for $j=1,2,3$. These conditions guarantee that $\boldsymbol \lambda$  is not in $Z$. Hence the result follows from Theorem \ref{mult1}.
\end{proof}
The next result  is  based on a remark due to T. Oshima (personal communication).

\begin{theorem}\label{mult2} Let $\boldsymbol \mu$ be in $Z$. Then
\[\dim Tri(\boldsymbol \mu)\geq 2\ .
\]
\end{theorem}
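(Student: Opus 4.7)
The plan is to construct two linearly independent elements of $Tri(\boldsymbol \mu)$ as leading Taylor coefficients of $\widetilde{\mathcal K}^{\boldsymbol \lambda}$ along two holomorphic curves through $\boldsymbol \mu$. The underlying principle, which is the content of Lemma~6.3 of \cite{o}, is as follows. For any holomorphic curve $\boldsymbol \lambda(t)$ through $\boldsymbol \mu$ along which $\widetilde{\mathcal K}^{\boldsymbol \lambda(t)}$ is not identically zero, write $\widetilde{\mathcal K}^{\boldsymbol \lambda(t)} = t^m T_m + O(t^{m+1})$ with $m \geq 1$ (since $\widetilde{\mathcal K}^{\boldsymbol \mu} = 0$) and $T_m \neq 0$ by definition of $m$. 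Extracting the $t^m$-coefficient from the invariance identity $\widetilde{\mathcal K}^{\boldsymbol \lambda(t)}(\boldsymbol \pi_{\boldsymbol \lambda(t)}(g) f) = \widetilde{\mathcal K}^{\boldsymbol \lambda(t)}(f)$ --- all lower-order coefficients vanish on both sides by definition of $m$ --- yields $T_m \circ \boldsymbol \pi_{\boldsymbol \mu}(g) = T_m$, so $T_m \in Tri(\boldsymbol \mu)$; and $\mathrm{Supp}(T_m)$ is contained in any closed set that contains $\mathrm{Supp}(\widetilde{\mathcal K}^{\boldsymbol \lambda(t)})$ for all small nonzero $t$.

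For case \eqref{ZIalpha}, say $\alpha_1 = -(n-1) - 2k_1$ and $\alpha_2 = -(n-1) - 2k_2$, I would take $\boldsymbol \alpha^{(1)}(t)$ deforming off the $\alpha_1$-plane while remaining on the $\alpha_2$-plane, and symmetrically $\boldsymbol \alpha^{(2)}(s)$ off the $\alpha_2$-plane while remaining on the $\alpha_1$-plane (adjusting other coordinates if $\boldsymbol \mu$ sits on further pole planes). For small nonzero parameter, these are generic type-$I_2$ and type-$I_1$ poles outside $Z$, so by Proposition~\ref{suppK}(iii), $\mathrm{Supp}(T^{(j)}) \subset \overline{\mathcal O_j}$. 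The only proper $G$-invariant closed subset of each $\overline{\mathcal O_j}$ is $\mathcal O_4$, and $T^{(j)}$ cannot be supported in $\mathcal O_4$ without $\boldsymbol \mu$ being a type-II pole by Lemma~\ref{invsing2}; after adjusting the curves to avoid the type-II condition when applicable, $\mathrm{Supp}(T^{(j)}) = \overline{\mathcal O_j}$, and since $\mathcal O_1 \cap \overline{\mathcal O_2} = \emptyset$, $T^{(1)}$ and $T^{(2)}$ are linearly independent.

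For case \eqref{ZIIalpha}, say $\alpha_1 + \alpha_2 + \alpha_3 = -2(n-1) - 2k$ and $\alpha_3 = 2l_3$, I would take $\boldsymbol \alpha^{(1)}(t)$ deforming inside the type-II plane (varying $\alpha_3$ off $2l_3$ while preserving the sum), so that for small nonzero $t$ it is a generic type-II pole outside $Z$ with support in $\mathcal O_4$ by Proposition~\ref{suppK}(iv); hence $\mathrm{Supp}(T^{(1)}) \subset \mathcal O_4$. Take $\boldsymbol \alpha^{(2)}(s)$ transverse to the type-II plane while preserving $\alpha_3 = 2l_3$; for small nonzero $s$, $\boldsymbol \alpha^{(2)}(s)$ lies on no pole plane, so $\widetilde{\mathcal K}^{\boldsymbol \alpha^{(2)}(s)}|_{\mathcal O_0} = |x-y|^{\alpha_3}|y-z|^{\alpha_1}|z-x|^{\alpha_2}/\prod_i \Gamma_i(\boldsymbol \lambda)$, in which exactly one $\Gamma$-factor acquires a simple zero at $s = 0$ while the others remain finite and nonzero. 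The first-order coefficient on $\mathcal O_0$ is therefore $|x-y|^{\alpha_3}|y-z|^{\alpha_1}|z-x|^{\alpha_2}$ times an explicit nonzero scalar, so $\mathrm{Supp}(T^{(2)}) \supset \mathcal O_0$; since $\mathcal O_0 \cap \mathcal O_4 = \emptyset$, $T^{(1)}$ and $T^{(2)}$ are linearly independent.

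The main obstacle is the support-separation step: verifying that the leading Taylor coefficient $T_m$ does not collapse onto a strictly smaller $G$-invariant subset that could accommodate both $T^{(1)}$ and $T^{(2)}$. In each case this is controlled by combining Proposition~\ref{suppK} with Lemmas~\ref{invsing1}--\ref{invsing2} and, for case II, a direct first-order computation on $\mathcal O_0$ from the explicit formula for $\widetilde{\mathcal K}|_{\mathcal O_0}$. For non-generic $\boldsymbol \mu$ sitting on several components of $Z$ simultaneously, the recipe is to replace each curve by one leaving every undesired pole plane at once, which is always possible because $Z$ is a denumerable union of lines of complex codimension~$2$ so that transverse directions of each required type abound.
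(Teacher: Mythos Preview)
Your approach and the paper's both rest on Oshima's Lemma~6.3, but you deploy it differently. The paper applies the lemma abstractly: it sets up the one-dimensional family $V_{\boldsymbol\lambda} = \mathbb C\,\widetilde{\mathcal K}^{\boldsymbol\lambda}$ on the locus where the normalizing factor $r(\boldsymbol\lambda)\neq 0$, observes that Oshima's \emph{removability criterion} fails at $\boldsymbol\mu$ precisely because $\widetilde{\mathcal K}^{\boldsymbol\mu}=0$, and concludes $\dim\overline V_{\boldsymbol\mu}\geq 2$ directly, with no case analysis and no need to exhibit two distributions. Your plan instead extracts two explicit leading Taylor coefficients along chosen curves and separates them by support.

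Your concretization is sound for generic points of $Z$, but the support-separation step has a gap at degenerate points. In case~\eqref{ZIalpha} your argument that $T^{(j)}$ does not collapse to $\mathcal O_4$ rests on Lemma~\ref{invsing2}, which gives a contradiction only when $\boldsymbol\mu$ is \emph{not} a type-II pole. But $\boldsymbol\mu$ can satisfy~\eqref{ZIalpha} and be a type-II pole without satisfying~\eqref{ZIIalpha}: take $\alpha_1=\alpha_2=-(n-1)$ and $\alpha_3=-2$, so that $\alpha_1+\alpha_2+\alpha_3=-2(n-1)-2$ while no $\alpha_j$ lies in $2\mathbb N$. Your proposed fix of ``adjusting the curves'' does not help here, since the obstruction is at $\boldsymbol\mu$ itself, not along the approaching curves; both $T^{(1)}$ and $T^{(2)}$ could a~priori be supported in $\mathcal O_4$ and be proportional. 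One can repair this (e.g.\ by adding a third curve inside the type-II plane or by a transverse-order comparison), but that is work you have not supplied. The abstract removability criterion is exactly what makes the paper's argument uniform across all of $Z$.

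A bookkeeping slip: your curve $\boldsymbol\alpha^{(1)}$ stays on the $\alpha_2$-plane, hence is generic of type~$I_2$, so by Proposition~\ref{suppK}(iii) one has $\mathrm{Supp}(T^{(1)})\subset\overline{\mathcal O_2}$, not $\overline{\mathcal O_1}$; the indices in ``$\mathrm{Supp}(T^{(j)})\subset\overline{\mathcal O_j}$'' are swapped.
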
 

\begin{proof} (Sketch) It is a consequence of Lemma 6.3 in \cite{o}. We  follow the notation of this paper. Let $U=\mathbb C^3$ and let
\[r(\boldsymbol \lambda)= \frac{1}{\Gamma(\frac{\lambda_1+\lambda_2+\lambda_3+\rho}{2})\Gamma(\frac{-\lambda_1+\lambda_2+\lambda_3+\rho}{2})\Gamma(\frac{\lambda_1-\lambda_2+\lambda_3+\rho}{2})\Gamma(\frac{\lambda_1+\lambda_2-\lambda_3+\rho}{2})}
\]
(this is nothing but the normalizing factor for $\mathcal K^{\boldsymbol \lambda}$). Let
$U_r = \{ \boldsymbol \lambda, r(\boldsymbol \lambda) \neq 0\}$. If ${\boldsymbol \lambda}\in U_r$, then $\lambda$ is not a pole and  hence $V_{\boldsymbol \lambda}= Tri({\boldsymbol \lambda})$ is of dimension 1. For $\boldsymbol \mu\in U\setminus U_r$, let $\overline V_{\boldsymbol \mu}$ be the \emph{closure} of the holomorphic family $V_{\boldsymbol \lambda}$ (see precise definition in \cite {o}). Then $\overline V_{\boldsymbol \mu} \subset Tri(\boldsymbol \mu)$. Now, by \cite{o},  $\dim \overline V_{\boldsymbol \mu} \geq \dim_{\lambda\in U_r} V_{\boldsymbol \lambda}=1$, and there is a necessary and sufficient condition for having equality (in which case $\boldsymbol \mu$ is said to be a \emph{removable} point). Now $Z$ is contained in $U\setminus U_r$. Let $\boldsymbol \mu$ be in $Z$. The fact that $\widetilde K^{\boldsymbol \mu}=0$ implies that $\boldsymbol \mu$ does \emph{not} satisfy the criterion for removability, so that $\dim \overline V_{\boldsymbol \mu} > \dim V_{\boldsymbol \lambda}$. All together, $\dim Tri(\boldsymbol \mu)>1$.
 \end{proof}
{\bf Final remark.} When $\boldsymbol \lambda$ is a pole of type II, and $\boldsymbol \lambda$ is in $Z$, a $\boldsymbol \lambda$-invariant distribution is not necessarily supported on the diagonal $\mathcal O_4$ (such cases will appear in \cite{c2}). So it is conceivable that for some $\boldsymbol \lambda$ in $Z$ 
$\dim \mathcal B\mathcal D_G(\lambda_1,\lambda_2; k)=1$  although $\dim Tri(\boldsymbol \lambda) >1$ by Theorem \ref{mult2}. So an explicit  necessary and sufficient condition for having   $\dim {\mathcal BD}(\lambda_1,\lambda_2;k)=1$ is still an open problem. This problem is to be studied in relation with a problem about tensor products of generalized Verma modules (see \cite{s} for some work in this direction).

\medskip
\footnotesize{\noindent Address\\ Jean-Louis Clerc\\Institut Elie Cartan, Universit\'e de Lorraine, 54506 Vand\oe uvre-l\`es-Nancy, France\\}
\medskip
\noindent \texttt{{jean-louis.clerc@univ-lorraine.fr
}}

\end{document}